 \documentclass[a4paper, 12pt]{article}
\usepackage{color}

  \usepackage[normalem]{ulem}
\usepackage{chngcntr}
\usepackage{ulem}

\usepackage[colorlinks=true]{hyperref} %to link references and citations
\usepackage{soul}
\newcommand{\MnR}{\mathbb{R}^{n\times n}}
\newcommand{\Sn}{{\cal S}^n}

\newcommand{\R}{\mathbb{R}}
\newcommand{\Rn}{\mathbb{R}^{n}}

\newcommand{\Aut}{\mathrm{Aut}}
\newcommand{\Hn}{\mathbb{H}^n}
\usepackage{mathtools}
\usepackage{witharrows}
\usepackage{setspace}
\usepackage{amsfonts}
\usepackage{epsfig}
\usepackage{latexsym}
\usepackage{amsthm}
\usepackage{amsmath}
\usepackage{amssymb,amsfonts}
\usepackage{array}
\usepackage{nicematrix}
\usepackage{enumerate}
\usepackage{graphicx}
\usepackage{mathrsfs}
\usepackage{bm}
\usepackage{mathtools}
\usepackage{multirow}
\usepackage{multicol}
\usepackage{nicematrix}
\usepackage{arydshln}
\usepackage{blkarray}

\newtheorem{theorem}{Theorem}[section]
\newtheorem{corollary}{Corollary}[section]
\newtheorem{lemma}{Lemma}[section]
\newtheorem{definition}{Definition}[section]
\newtheorem{remark}{Remark}[section]

\newtheorem{example}{Example}[section]

\newtheorem{proposition}{Proposition}[section]
\counterwithin{figure}{section}

\usepackage[margin=1in]{geometry}
\usepackage{setspace}
\usepackage[english]{babel}
\usepackage{graphicx}
\usepackage{amsmath,amssymb,amsfonts}
\usepackage[normalem]{ulem}
\usepackage{fancyhdr}
\usepackage{xcolor}
\usepackage{url}

\usepackage{float}
\usepackage{tikz}
\usepackage{subcaption}
\usetikzlibrary{graphs, graphs.standard}
\usetikzlibrary{decorations.markings}

\usepackage{hyperref}
\hypersetup{
    colorlinks = true,
    linkcolor = black,
    anchorcolor = black,
    citecolor = blue,
    filecolor = black,
    urlcolor = black
    }
\usepackage[utf8]{inputenc}

\newcommand{\SOL}{\mathrm{SOL}}
\newcommand{\LCP}{\mathrm{LCP}}
\newcommand{\bfR}{{\bf R}}

\newcommand{\bfQ}{{\bf Q}}
\newcommand{\bfE}{{\bf E}}
\newcommand{\bfP}{{\bf P}}
\newcommand{\bfZ}{{\bf Z}}
\newcommand{\tr}{\mathrm{trace}}
\newcommand{\bfS}{{\bf S}}
\newcommand{\bfone}{\bf 1}
\newcommand{\V}{{\cal V}}
\newcommand{\E}{{\cal E}}

\newfont{\bb}{msbm10}

\title{Linear complementarity properties \\of some classes of banded matrices}

\author{}
\date{}
\author{Samapti Pratihar\thanks{Department of Mathematics, Indian Institute of Technology Madras,
		Chennai, 600036, India (ma22d019@smail.iitm.ac.in)} \and M. Seetharama Gowda\thanks{Department of Mathematics and Statistics, University of Maryland, Baltimore County\\Baltimore, Maryland 21250, USA (gowda@umbc.edu)} \and K.C. Sivakumar\thanks{Department of Mathematics, Indian Institute of Technology Madras,
		Chennai, 600036, India (kcskumar@iitm.ac.in)}}
\begin{document}
\maketitle

\begin{abstract}
 A banded matrix is a real square matrix where nonzero entries appear around the main diagonal. In this article, we consider linear complementarity properties of (variants) of banded matrices. Focusing on triangular matrices and the newly defined bidiagonal southwest matrices, we describe several results characterizing the $\bfQ$-property in terms of the sign patterns and determinant of the given matrix. As a byproduct, we describe all $\bfQ$-matrices of size $2\times 2$. Extending these results to Euclidean Jordan algebras, we consider matrix-based linear transformations and study the $\bfQ$-property. In particular, we show that a rank-one linear transformation of the form $a\otimes b$ has the $\bfQ$-property if and only if either $a>0,\,b>0$ or $a<0$, $b<0$.
\end{abstract}

\vspace{1cm}
\textbf{Keywords:} Linear complementarity problem $\cdot$ Triangular matrix $\cdot$ Bdsw matrix $\cdot$ Euclidean Jordan algebra $\cdot$ Symmetric cone LCP

\textbf{AMS Subject classifications:}  15B35, 15B99, 17C55, 90C33

\clearpage
%%%%%%%%%%%%%%%%%%%%%%%%%%%
\section{Introduction}
 Given an $n\times n$ real matrix $A$ and a vector $q\in \Rn$, the {\it linear complementarity problem}, $\LCP(A,q)$, is to find $x \in \Rn$ satisfying the following conditions:
\begin{equation} \label{lcp}
x\ge 0,\,
Ax + q \ge 0,\,\mbox{and}\,\, \\
x^\top (Ax + q)=0.
\end{equation}
This problem is a particular instance of a cone complementarity problem, which itself is a special case of a variational inequality problem. LCP and its generalizations have been well-studied in the literature with numerous applications to optimization, economics, mechanics, etc., see e.g., \cite{cottle-pang-stone, facchinei-pang}. In this paper, we focus on the above LCP as well as the symmetric cone LCP, formulated in the setting of an Euclidean Jordan algebra. In the LCP theory, one studies classes of matrices having some specific properties.
For example, the class $\bfQ$ consists of matrices $A$ for which $\LCP(A,q)$ has a solution for all $q\in \Rn$. (A matrix in this class is said to be a $\bfQ$-matrix or said to have the $\bfQ$-property.) While there is no simple characterization of matrices within this class, one studies certain tractable classes within $\bfQ$ such as the class $\bfR$, consisting of matrices $A$ for which $\LCP(A,0)$ and $\LCP(A,d)$ have only trivial/zero solutions for some $d>0$; another important subclass is the class $\bfP$, consisting of matrices $A$ with every principal minor positive. Given an arbitrary matrix, even these conditions are difficult to check. However, in a few instances, when the matrix $A$ has an easily recognizable structure, we can characterize the $\bfQ$-property. For example,
\begin{itemize}
    \item When $A$ is a nonnegative matrix, $A$ has the $\bfQ$-property if and only if the diagonal of $A$ is positive \cite{murty};
    \item When $A$ is a $\bfZ$-matrix (meaning that its off-diagonal entries are nonpositive), $A$ is a $\bfQ$-matrix if and only if there is a positive vector $u$ with $Au$ positive (equivalently, $A$ is a $\bfP$-matrix) \cite{cottle-pang-stone};
    \item When $A$ is of rank-one, $A$ is a $\bfQ$-matrix if and only if $A$ is a positive matrix (i.e., all its entries are positive) \cite{sivakumar-sushmitha-tsatsomeros2022}.
\end{itemize}
Motivated by the above, we ask if similar characterizations could be obtained for banded matrices and their variants. 
Recall that a {\it banded matrix} is a real square matrix where nonzero entries appear around the main diagonal. Examples include upper/lower triangular matrices, bi/tri-diagonal matrices, Hessenberg matrices (in particular, the newly defined bdsw matrices).  Such matrices appear in numerous areas, including matrix theory, numerical analysis, partial differential equations, control theory, etc. The present paper grew out of the question: What are the linear complementarity properties of these matrices?

By way of answering the above question, in the first part of the paper, we initiate a study of the LCP properties (especially the $\bfQ$-property) of banded matrices and their variants. To start with, we show that a {\it triangular matrix has the $\bfQ$-property if and only if its diagonal is positive}; we prove a similar result when $A$ is obtained from an upper triangular matrix by adjoining a suitable nonnegative row. Next, we consider a bidiagonal matrix \cite{higham} with an additional entry in the southwest corner - resulting in a `bidiagonal southwest' (bdsw) matrix \cite{pratihar-sivakumar2024}. In our paper, for $n\geq 2$, these are matrices of the form
\begin{equation} \label{bdsw matrix form}
A = \begin{bmatrix}
    a_{11} & a_{12} &  &  0&  & 0 \\
     & a_{22} & a_{23} &  &  &  \\
     &  & a_{33} & a_{34} &   &  \\
    0 &    &   & \ddots & \ddots & 0\\
     &  &  &  & a_{(n-1)(n-1)} & a_{(n-1)n} \\
    a_{n1} &  & 0 &  & 0 & a_{nn}
\end{bmatrix},
\end{equation}
where $a_{ij}\in \R$; If $a_{n1}=0$, we get a bidiagonal matrix. Interest in bdsw matrices comes from the observation that when all $a_{ij}$ are nonzero, the corresponding directed graph is a simple directed cycle with loops, and the inverse of a nonsingular $A$ has all its entries nonzero \cite{pratihar-sivakumar2024}. 
Exploiting the invariance properties of the set of all bdsw matrices (that is, by considering certain row-column changes, principal submatrices, Schur complements, etc.) and using topological degree results, we show that the $\bfQ$-property of a bdsw matrix $A$ depends on the signs of the entries $a_{ij}$ and/or the determinant. As a byproduct, we solve the problem of characterizing the $\bfQ$-property of matrices of size $2\times 2$.
While our results are for banded matrices and their variants, one can obtain non-banded $\bfQ$-matrices via simultaneous row-column permutations, principal pivot transformations, and deletion/addition of suitable rows/columns. Although our analysis in the paper is limited to triangular and bdsw matrices, we hope to use our techniques to study other banded matrices such as tridiagonal matrices and their variants.

Going beyond the (standard) linear complementarity problem (\ref{lcp}), in the second part of our paper, we consider the symmetric cone linear complementarity problem formulated in the setting of an Euclidean Jordan algebra. To elaborate, consider an Euclidean Jordan algebra $\V$ with its symmetric cone $\V_+$. (For example, $\V$ could be the space of all $n\times n$ real symmetric matrices with $\V_+$ denoting the cone of positive semidefinite matrices in $\V$.) Given a linear transformation $L$ on $V$ and $q\in \V$, we define the symmetric cone LCP, $\LCP(L,\V_+,q)$, as the problem of finding an $x\in \V$ such that 
\begin{equation}\label{symmetric cone lcp}
x\in \V_+,\,y:=L(x)+q\in \V_+,\,\mbox{and}\,\,\langle x,y\rangle=0.
\end{equation}
While such a general problem has been studied in numerous works, see e.g., \cite{gowda-song1999, gowda-sznajder-tao2004}, our objective here is to consider $L$ as a matrix-based linear transformation. Given a Jordan frame $\{e_1,e_2,\ldots, e_n\}$ in $\V$, we associate a linear transformation $\widehat{A}$ to each matrix $A=[a_{ij}]\in \MnR$:
$$\widehat{A}:=\sum_{i,j=1}^{n}a_{ij}e_i\otimes e_j,$$ where $(e_i\otimes e_j)(x):=\langle e_j,x\rangle\,e_i.$ Then, corresponding to any (cone) automorphism $\phi$ of $\V_+$, we consider $$L:=\phi^T\widehat{A}\phi.$$ By connecting the linear complementarity properties of $A$ and $\widehat{A}$, we show that under certain conditions, $A$ has the $\bfQ$-property relative to $\Rn_+$ (i.e., in the standard LCP setting) if and only if $L$ has the $\bfQ$-property relative to $\V_+$. This will allow us to extend the LCP properties of banded matrices studied in the first part to a general Euclidean Jordan algebra setting. However, this extension is not limited to banded matrices. By taking $A$ to be a rank-one matrix, we extend a recent result of \cite{sivakumar-sushmitha-tsatsomeros2022} (mentioned earlier) to a rank-one transformation over an Euclidean Jordan algebra. 

 An outline and brief summary of the results are as follows: In Section 2, we present some necessary preliminary material, specifically addressing the LCP concepts. In Section 3, we show that 
       a (variant of a) triangular matrix is in $\bfQ$ if and only if its diagonal is positive.
In Section 4, we deal with bidiagonal southwest (bdsw) matrices. Since a matrix with a nonpositive row cannot have the $\bfQ$-property, {\it we assume each row of a (bdsw) matrix under consideration has a positive entry.} With this assumption, we consider four types of bdsw matrices and characterize the $\bfQ$-property in each case. 
\begin{itemize}
    \item A Type-I bdsw matrix contains at least one nonnegative row. For such a matrix, we provide a characterization of the $\bf Q$-property in terms of the signs of diagonal or superdiagonal entries, see Theorems \ref{Bdsw, an1>=0, ann>0} $-$ \ref{bdsw with kth row nonnegative}.
    \item A Type-II bdsw matrix has positive diagonal entries with superdiagonal and southwest corner entries negative. We show that such a matrix has the $\bfQ$-property if and only if its determinant is positive, see Theorem \ref{type2 bdsw matrix theorem}.
    \item A Type-III bdsw matrix is the negative of a Type-II bdsw matrix. It has negative diagonal entries with superdiagonal and southwest corner entries positive. For such a matrix $A$, we show (in Theorem \ref{type3 theorem}) that $A$ has the $\bfQ$-property if and only if $(-1)^{n+1}\det A>0$. 
    \item A Type-IV bdsw matrix is a bdsw matrix that does not belong to any of the previous type-classes. In such a matrix, every row has one positive and one negative entry, and there are at least two rows, one row with a positive diagonal entry and another row with a negative diagonal entry.
    For such a matrix $A$, we show, in Theorem \ref{type4 theorem}, that the $\bfQ$-property is equivalent to $(-1)^{k+1}\det A >0$, where $k$ is the number of negative diagonal entries.
    \end{itemize}
A key observation, based on the notion of (topological) degree is the following:
{\it A bdsw matrix has the $\bfQ$-property if and only if it is an $\bfR_0$-matrix with degree $\pm 1.$}

In Theorem \ref{2*2 Q characterization}, {\it we provide a complete characterization of $\bfQ$-matrices of size $2\times 2$ in terms of sign patterns and determinant.} 

In our final section, we describe some complementarity results for matrix-based linear transformations on Euclidean Jordan algebras. In particular, we characterize the $\bfQ$-property of a rank-one linear transformation.

%%%%%%%%%%%%%%%%%%%%%%%%%%%%%%
\section{Preliminaries}
\subsection{Linear complementarity problems}
Throughout, we write $\Rn$ for the real Euclidean $n$-space with usual inner product. The nonnegative orthant is denoted by $\Rn_+$; we regard elements of $\Rn$ as column vectors and write $x\geq 0$ $(x>0)$ when $x\in \Rn_+$ (respectively, $x\in \Rn_{++}=\mathrm{interior}\,(\Rn_+)$). We use the bold letter $\bfone$ for the vector of $1$s in $\Rn$.\\
Note: When a (column) vector $x\in \Rn$ is decomposed in terms of two component vectors, say, $x_1\in \R^k$ and $x_2\in \R^{n-k}$, we write (for simplicity) $x=(x_1,x_2)$  instead of $x=(x_1^T,x_2^T)^T$.\\
Throughout, $\MnR$ denotes the set of all real $n\times n$ matrices. Any matrix in $\MnR$ is said to have order $n$. For a real square matrix $A$, we write $A\geq 0$ ($A>0$) to mean that all its entries are nonnegative (respectively, positive). We rewrite (\ref{lcp}) in the form
$$0\leq x\perp Ax+q\geq 0.$$

We recall the following definitions from \cite{cottle-pang-stone}.
\begin{definition}
    Let $A\in \MnR$. Then,
\begin{itemize}
\item [$(1)$] $A$ is said to be a $\bfQ$-matrix if for every $q\in \Rn$, $\LCP(A,q)$ has  a solution.
 \item [$(2)$] $A$ is an $\bfR_0$-matrix if $\LCP(A,0)$ has  only the trivial/zero solution.
\item [$(3)$] Let $d>0$ in $\Rn$. Then $A$ is an $\bfR(d)$-matrix if $\LCP(A,d)$ and $\LCP(A,0)$ have only trivial/zero solution. $A$ is said to be an $\bfR$-matrix if $A\in \bfR(d)$ for some $d>0$.
\item [$(4)$] $A$ is an $\bfE_0$-matrix $(\bfE$-matrix$)$ if for every $d>0$ $($respectively, $d\geq 0)$, zero is the only solution of $\LCP(A,d)$. Equivalently, $A\in \bfE_0\,\,(\bfE)$ if for all $0\neq x\geq 0$, $\max\limits_{x_i\neq 0}\, x_i(Ax)_i \geq 0\, (>0),$ see \cite{cottle-pang-stone}, Theorems 3.9.3 and 3.9.11.
\item [$(5)$] $A$ is an $\bfR^*$-matrix if $A$ is both an $\bfR_0$-matrix and an $\bfE_0$-matrix. 
\item [$(6)$] $A$ is an $\bfS$-matrix if there exists $x>0$ such that $Ax>0$.
\item [$(7)$] $A$ is a $\bfP_0$-matrix if all principal minors of $A$ are nonnegative. Such a matrix is known to be an $\bfE_0$-matrix (\cite{cottle-pang-stone}, Theorems 3.4.2 and 3.9.3).
\item [$(8)$] $A$ is a $\bfP$-matrix if all principal minors of $A$ are positive. 
\item [$(9)$] $A$ is a $\bfQ_0$-matrix if the $\LCP(A,q)$ is solvable whenever it is feasible. (Feasibility means that there exists $x\geq 0$ such that $Ax+q\geq 0$.)
\end{itemize}
If $A$ has the property $(*)$, we say that $A$ belongs to the class $(*)$; we use the same bold letter to denote the property as well as the corresponding class.
\end{definition}

It is easy to see that for any permutation matrix $P$, the map $A\mapsto PAP^T$ keeps each of the above classes invariant. In particular, this holds when $P$ is the {\it permutation matrix $J$ which has all $1$s on the antidiagonal.} We remark that when $A$ is a lower triangular matrix, $JAJ$ becomes an upper triangular matrix with diagonal entries rearranged. 

The inclusion/equivalence statements in the following theorem are well-known \cite{cottle-pang-stone}.

\begin{theorem}\label{inclusion theorem}
    Consider the LCP classes defined above. Then the following hold:
    \begin{itemize}
        \item [$(i)$]
        $\bfP \subseteq \bfE\subseteq \bfR\subseteq \bfQ,\quad
        \bfP_0\cap \bfR_0\subseteq \bfE_0\cap \bfR_0= {\bfR}^*\subseteq\bfR\subseteq \bfR_0, \quad \mbox{and}\quad 
          \bfQ\subseteq \bfS$.
         \item [$(ii)$] Suppose $A\in \bfP_0$. Then $A\in \bfQ$ if and only if $A\in \bfR_0$.
        \item [$(iii)$] If $A$ is an $\bfS$-matrix, then $A$ cannot have a nonpositive row. In particular, if $A$  is a $\bfQ$-matrix, then every row of $A$ must have a positive entry.
        \end{itemize}
\end{theorem}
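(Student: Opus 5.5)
The plan is to verify each inclusion and equivalence directly from the definitions where that is easy. For the deeper implications I will invoke the standard degree-theoretic and Lemke-type results from \cite{cottle-pang-stone}. I begin with item $(iii)$ and the last inclusion of $(i)$, since they are elementary.

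\textbf{Item $(iii)$ and $\bfQ\subseteq \bfS$.} Suppose some row $i$ of $A$ is nonpositive and $x>0$. Then $(Ax)_i\le 0$, so $Ax>0$ is impossible and $A\notin\bfS$. For $\bfQ\subseteq\bfS$, take $q=-\bfone$ and a solution $x$ of $\LCP(A,-\bfone)$. Then $x\ge 0$ and $Ax\ge \bfone>0$. Since $A$ is continuous, a small perturbation $x+\epsilon\bfone$ is positive and still satisfies $A(x+\epsilon\bfone)>0$.

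\textbf{The chain $\bfP\subseteq\bfE\subseteq\bfR\subseteq\bfQ$.}
\begin{itemize}
\item $\bfP\subseteq\bfE$ follows from the sign-reversal characterization of $\bfP$. For every $0\neq x$ there is an index $i$ with $x_i(Ax)_i>0$. Applied to $0\neq x\ge 0$, this gives $\max_{x_i\ne0}x_i(Ax)_i>0$.
\item $\bfE\subseteq\bfR$: taking $d=0$ in the $\bfE$-definition gives $\bfR_0$, and taking any $d>0$ gives the $\LCP(A,d)$ condition. So $A\in\bfR(d)$.
\item $\bfR\subseteq\bfQ$ is the main obstacle. I would use degree theory. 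Consider the homotopy $H(x,t)=\min\{x,\,Ax+t d+(1-t)q\}$ joining $\LCP(A,q)$ at $t=0$ to $\LCP(A,d)$ at $t=1$, together with its scaled version. The $\bfR(d)$ hypothesis makes $\LCP(A,d)$ have the unique solution $0$, and that solution is nondegenerate because $d>0$, so the local degree at $t=1$ is $1$. Next I show the zero set of $H$ is bounded. If solutions $x_k$ of the homotopy problems satisfy $\|x_k\|\to\infty$, normalize and pass to a limit. This yields a nonzero solution of $\LCP(A,\tau d)$ for some $\tau\in[0,1]$, which $\bfR(d)$ rules out (covering both $\tau=0$ and $\tau>0$). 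Homotopy invariance then gives $\deg(\min\{x,Ax+q\},\Omega,0)=1$, so $\LCP(A,q)$ is solvable. Alternatively, I would cite Lemke's algorithm termination for $\bfR$-matrices \cite{cottle-pang-stone}.
\end{itemize}

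\textbf{The second chain of $(i)$.} $\bfP_0\subseteq\bfE_0$ is recorded in the definition, so $\bfP_0\cap\bfR_0\subseteq\bfE_0\cap\bfR_0$, and the latter equals $\bfR^*$ by definition. For $\bfR^*\subseteq\bfR$, take $d=\bfone$. The $\bfE_0$ condition gives only the zero solution of $\LCP(A,\bfone)$, and $\bfR_0$ handles $\LCP(A,0)$. Finally, $\bfR\subseteq\bfR_0$ is immediate from the definition.

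\textbf{Item $(ii)$.} $\bfR_0\Rightarrow\bfQ$ follows from the second chain, since $\bfP_0\cap\bfR_0\subseteq\bfR\subseteq\bfQ$. For the converse, suppose $A\in\bfP_0\cap\bfQ$ but $\LCP(A,0)$ has a nonzero solution $x$. I would use that $A+\epsilon I\in\bfP$ for $\epsilon>0$ and argue via the known theorem that a $\bfP_0\cap\bfQ$ matrix must be $\bfR_0$ \cite{cottle-pang-stone}. This direction I would cite rather than reprove, as it is the second delicate point.
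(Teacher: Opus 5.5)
Your proposal is correct, and it does more than the paper, which gives no argument here at all: it records these inclusions as well known and refers to \cite{cottle-pang-stone}. You instead check the definitional pieces directly:
item $(iii)$; $\bfQ\subseteq\bfS$, via a solution of $\LCP(A,-\bfone)$ shifted by $\epsilon\bfone$; $\bfP\subseteq\bfE$, via sign non-reversal; $\bfE\subseteq\bfR$; and $\bfR^*\subseteq\bfR\subseteq\bfR_0$.
You then prove $\bfR\subseteq\bfQ$ by a degree homotopy. This is the same machinery the paper uses later (``$A\in\bfR\Rightarrow\deg A=1$'' and ``$\deg A\neq0\Rightarrow A\in\bfQ$''), and it is the case $\V=\Rn$ of Theorem~\ref{karamardian}. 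Your route gives a self-contained preliminaries section, whereas the paper's citation gives brevity.

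Two small remarks.

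First, with $H(x,t)=\min\{x,Ax+td+(1-t)q\}$, dividing by $\|x_k\|$ kills the whole affine term, so the normalized limit is always a nonzero element of $\SOL(A,0)$. Thus boundedness uses only $\bfR_0$, and the $\LCP(A,d)$ part of $\bfR(d)$ is used only to get index $1$ at $t=1$. The ``$\tau>0$'' case you mention never occurs; this is harmless but should be stated accurately.

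Second, in item $(ii)$ the direction $\bfP_0\cap\bfQ\subseteq\bfR_0$ is the one genuinely nontrivial ingredient. Citing it puts you on the same footing as the paper, so there is no gap relative to it. However, the perturbation $A+\epsilon I\in\bfP$ you mention is the tool for the \emph{opposite} direction: via the homotopy $A+tI$, it shows $\deg A=1$ for $A\in\bfP_0\cap\bfR_0$. It does not by itself produce an unsolvable $q$ from a nonzero $z\in\SOL(A,0)$, so an actual proof of that direction would need a different idea.
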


\begin{theorem}\label{block matrix Thm}
     Let $A \in \MnR$, $n\geq 2$, be a block matrix of the form \begin{equation}\label{block form}
         A=\begin{bmatrix}
        B&C\\D&E
         \end{bmatrix},
     \end{equation}
    where $B$ and $E$ are square matrices.
    \begin{itemize}
    \item [$(i)$] Suppose $D\geq 0$. Then $A\in \bfR_0\Rightarrow B\in \bfR_0$.
    \item [$(ii)$] Suppose $D,E\geq0$ and $d=(d_1,d_2)>0$. Then
    \begin{itemize}
        \item  [$(a)$] 
     $A\in \bfQ \Rightarrow B\in \bfQ$.
     \item [$(b)$] $B,E\in \bfR_0\Rightarrow A\in \bfR_0$.
     \item [$(c)$] $B\in \bfR(d_1),\, E\in \bfR_0\Rightarrow A\in \bfR(d)$.
     \item [$(d)$]  $B \in \bfR, \,E \in \bfR_0\Rightarrow A \in \bfR$.
     \item [$(e)$] $B\in \bfR^*,\, E \in \bfR_0\Rightarrow A\in \bfR^*.$
     \end{itemize}
     
        \end{itemize}
\end{theorem}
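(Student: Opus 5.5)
Throughout, write $x=(x_1,x_2)$, $q=(q_1,q_2)$ and $d=(d_1,d_2)$ conformally with the block form (\ref{block form}), so that $(Ax)_1=Bx_1+Cx_2$ and $(Ax)_2=Dx_1+Ex_2$. The plan is to check each implication directly from the definitions. The key device is padding a vector with zeros: when $D\geq 0$, a solution of a subproblem for $B$ extends to a solution for $A$. In the other direction, one can pass from a solution for $A$ to a solution for $E$, using the sign conditions on $D$ and $E$ to control the second block of $Ax+q$.

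For $(i)$, we argue by contrapositive. Suppose $0\neq x_1$ solves $\LCP(B,0)$ and put $x=(x_1,0)$. Then $(Ax)_1=Bx_1$, so the first-block complementarity conditions hold. Also $(Ax)_2=Dx_1\geq 0$ because $D\geq 0$ and $x_1\geq 0$, while $x_2=0$ makes the second-block complementarity trivial. Hence $x$ is a nonzero solution of $\LCP(A,0)$, contradicting $A\in\bfR_0$.

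For $(ii)(a)$, fix $q_1$ and take $q=(q_1,q_2)$ with $q_2$ large. If $x=(x_1,x_2)$ solves $\LCP(A,q)$, then $(Ax+q)_2=Dx_1+Ex_2+q_2\geq q_2>0$, since $D,E\geq 0$. Complementarity then forces $x_2=0$, so $x_1$ solves $\LCP(B,q_1)$. Any $q_2>0$ works, so $A\in\bfQ$ gives $B\in\bfQ$.

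For $(b)$, let $x$ solve $\LCP(A,0)$. The second block satisfies $0\leq x_2\perp Dx_1+Ex_2\geq 0$. Since $Dx_1\geq 0$, $E\geq 0$ and $x_2\geq 0$, every term in $x_2^T(Dx_1+Ex_2)$ is nonnegative, so $x_2^T(Ex_2)=0$. The vector $Ex_2$ is nonnegative, so $x_2$ solves $\LCP(E,0)$, and $E\in\bfR_0$ gives $x_2=0$. Then $x_1$ solves $\LCP(B,0)$, and $B\in\bfR_0$ gives $x_1=0$.

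For $(c)$, the same argument applies to $\LCP(A,d)$. From $x_2^T(Dx_1+Ex_2+d_2)=0$ with all terms nonnegative we get $x_2^T(Ex_2)=0$, so $x_2$ solves $\LCP(E,0)$ and hence $x_2=0$. Then $x_1$ solves $\LCP(B,d_1)$, and $B\in\bfR(d_1)$ gives $x_1=0$. Together with $(b)$, this shows $A\in\bfR(d)$. Part $(d)$ follows by choosing $d_1$ with $B\in\bfR(d_1)$ and any $d_2>0$.

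For $(e)$, part $(b)$ gives $A\in\bfR_0$, so it remains to show $A\in\bfE_0$. Let $d>0$ and let $x$ solve $\LCP(A,d)$. As in $(c)$, $x_2=0$ and $x_1$ solves $\LCP(B,d_1)$. Since $B\in\bfE_0$, this forces $x_1=0$.

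The main obstacle is making the second-block argument precise. The point is that nonnegativity of $D$, $E$ and $x$ makes each summand of $x_2^T(\cdot)$ vanish individually; this is what isolates $\LCP(E,0)$ and is the step where both $D\geq 0$ and $E\geq 0$ are genuinely needed.
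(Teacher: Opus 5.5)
Your proposal is correct and follows essentially the same argument as the paper in every part: zero-padding for $(i)$, a positive $q_2$ forcing $x_2=0$ for $(ii)(a)$, and separating off the second block for $(b)$--$(e)$. The only inessential difference is that in $(c)$ the paper gets $x_2=0$ directly from $Dx_1+Ex_2+d_2>0$ (so $E\in\bfR_0$ is not needed there), while you pass through $\LCP(E,0)$; and in $(e)$ the paper applies $(c)$ for every $d>0$ rather than checking the $\bfE_0$-property directly.
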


\begin{proof} In what follows, for an $x$ in the domain of $A$, we write $x=(x_1,x_2)$, where $x_1$ and $x_2$ are compatible with $B$ and $E$, respectively. \\
$(i)$ Suppose $D\geq 0$. If $0\leq  x_1\perp Bx_1\geq 0$, then $x=(x_1,0)$ satisfies
$0\leq x\perp Ax\geq 0$. Thus, $A\in \bfR_0\Rightarrow B\in \bfR_0$.\\
$(ii)$ Henceforth, we assume that $D,E\geq0$ and $d=(d_1,d_2)>0$.\\
$(ii)(a)$  Assume $A\in \bfQ$ and $p\in \R^k$, where $B$ has order $k$. Let $q=(p,r)$, where  $0 < r \in \R^{n-k}$. If $x=(x_1,x_2)\in \R^k\times \R^{n-k}$ is a solution of $\LCP(A,q)$, then we have
$x_2\geq 0, Dx_1+Ex_2+r>0$ and $\langle x_2,Dx_1+Ex_2+r\rangle=0.$ Consequently, 
$x_2=0.$
It follows that  $x_1$ solves $\LCP(B,p)$. This proves that $B\in \bf \bfQ$.\\
$(ii)(b)$ Assume that $B,E \in \bfR_0$. To show that $A \in \bfR_0$, consider $0\leq x\perp Ax\geq 0$. Then, $x_1\geq 0,x_2\geq 0$ and 
$$Bx_1+Cx_2\geq 0, \,Dx_1 +Ex_2 \geq 0,\,\,\mbox{with}\,\, x_1^T(Bx_1+Cx_2) +x_2^T(Dx_1 +Ex_2)=0.$$
From these, we get $x_2^T(Dx_1 +Ex_2)=0$.
Consequently, as $D,E \geq0$, we get $Ex_2 \geq 0$ with $x_2^TEx_2=0$. Since $E \in \bfR_0$,  $x_2=0$. Putting $x_2=0$ in the previous expressions, we get $Bx_1\geq 0$, $x_1^TBx_1=0$; hence $x_1=0$ (as $B \in \bfR_0)$. Therefore $A \in \bfR_0$.\\
$(ii)(c)$ Suppose that $B\in \bfR(d_1)$ and $E\in \bfR_0$. From Item $(b)$, $A \in \bfR_0$. To show that $A \in {\bfR}(d)$, let $x$ be a solution of $\LCP(A,d)$. Then $x_2 \geq 0, Dx_1+Ex_2+d_2 \geq 0$ with $x_2^T(Dx_1+Ex_2+d_2)=0$. As $Dx_1+Ex_2+d_2>0$, we get $x_2=0$. Consequently, $x_1$ solves $\LCP(B,d_1)$. As $B \in \bfR(d_1)$, we have $x_1=0$. This implies that $A \in \bfR(d)$.\\
$(ii)(d)$ This is an easy consequence of Item $(ii)(c)$.\\
$(ii)(e)$ Suppose $B\in \bfR^*, \,E \in \bfR_0$. Then $B\in \bfR_0$ and $B\in {\bfR}(d_1)$ for all $d_1>0$. Then from Item $(c)$, $A \in {\bf R}(d)$ for every $d=(d_1,d_2)>0$; hence $A \in \bfR^*$.
\end{proof}

Assume that the matrix $A$ is given in the block form (\ref{block form}). Suppose $E$ is invertible. Then, the principal pivotal transform of $A$ with respect to $E$ is defined by
    $$\widetilde{A}:=
    \begin{bmatrix} A/E & CE^{-1}\\-E^{-1}D & E^{-1}
    \end{bmatrix},$$
    where $A/E:=B-CE^{-1}D$ is the Schur complement of $A$ with respect to $E$. The following results are useful:
    \begin{itemize}
        \item  $\det A= \det (A/E)\cdot \det E$ (Schur determinantal formula).
        \item  $A\in \bfQ\Leftrightarrow \widetilde{A}\in \bfQ$ \cite{cottle-pang-stone}. 
        \item  If $E^{-1} \geq 0$ and $D \leq 0$, then $A \in \bfQ \Rightarrow A/E \in \bfQ$ (from Theorem \ref{block matrix Thm}$(ii)(b)$). 
    \end{itemize}

 We briefly recall the concept of (topological) degree of a matrix $A\in \bfR_0$. For such a matrix, we consider the nonlinear function
$$f(x):=\min\{x,Ax\}=x-(x-Ax)^+\quad (x\in \Rn),$$
where for any $z\in \Rn$, $z^+=\max\{z,0\}.$
As $f(x)=0\Rightarrow x=0$, given any bounded open set $\Omega$ containing $0$ in $\Rn$, the (topological) degree of $f$,
$\deg (f,\Omega,0)$, is well-defined as an integer \cite{ facchinei-pang}.  This is the (LCP) degree of $A$:
$$\deg A:= \deg(f,\Omega,0).$$
Usually, one computes the $\deg A$ by using various properties of degree. In some instances, $\deg A$ (for $A\in \bfR_0$)  can be computed as follows:
 Suppose for some $q$, $\LCP(A,q)$ has a finite number of nondegenerate solutions with $z$ denoting any one of them.
  (It is known that there is always such a $q$. Here, nondegeneracy means: $z+Az+q>0$.) Corresponding to $z$, let $I:=\{k:z_k>0\}$ (support of $z$), and $A_{II}$ denote the corresponding supporting matrix (which is a submatrix of $A$ corresponding to the indices in $I$). Then, see \cite{gowda-degree1993}, p. 871,
  $$\deg A:=\sum \mbox{sgn} \det A_{II},$$ where the sum is taken  over all solutions of $\LCP(A,q)$.
  We remark that $\deg A$ can also be computed by using the function $g(x)=x^+-Ax^-$, where $x^+:=\max\{x,0\}$ and $x^-:=x^+-x$, see \cite{cottle-pang-stone}, Section 6.1. For various general properties of degree, we refer to Proposition 2.1.3 in \cite{facchinei-pang}. In connection with linear complementarity problems, we have the  following:
\begin{itemize}
    \item If $\deg A\neq 0$, then $A\in \bfQ$.
    \item If $A \in \bfR$, then $\deg A=  1$.
    \item If $A$ has the block form (\ref{block form}) with $A\in \bfR_0$ and $E$ invertible, then $\widetilde{A}\in \bfR_0$ and $$\deg \widetilde{A} = \deg A\cdot \mbox{sgn}\,\det E$$ (Cf. \cite{cottle-pang-stone}, Theorem 6.6.23.)
    \end{itemize}

  In Theorem \ref{block matrix Thm}, Item $(ii)(a)$ shows that $A\in \bfQ\Rightarrow B\in \bfQ$, while Item $(ii)(c)$ provides conditions for the converse. In our next result, we consider a variation of the (converse) requirements that $B\in \bfR(d_1)$ and $E\in \bfR_0$.

\begin{proposition}\label{degree result}
Let $A$ be given in the block form (\ref{block form}). Suppose that 
    \begin{itemize}
        \item $B,E\in \bfR_0$,
        \item $D,E\geq 0.$
        \end{itemize}
     Then,  A$\in \bfR_0$ and  
     $\deg A=\deg B\cdot \,\deg E.$
     In particular, 
     \begin{itemize}
         \item [$(i)$] When $E$ is of size $1\times 1$ (with a positive entry), $\deg A=\deg B$;
         \item [$(ii)$] When $\deg B$ and $\deg E$ are nonzero, $A\in \bfQ$.
         \end{itemize}
    \end{proposition}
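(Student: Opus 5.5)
The fact that $A\in\bfR_0$ is exactly Theorem \ref{block matrix Thm}$(ii)(b)$, so only the degree formula needs proof. The plan is a homotopy argument: deform the off-diagonal blocks $C$ and $D$ to zero while keeping the $\bfR_0$ property along the way. Then use the product property of degree for the block diagonal matrix $\mathrm{diag}(B,E)$.

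For $t\in[0,1]$ define
$$A_t:=\begin{bmatrix} B & tC\\ tD & E\end{bmatrix}.$$
Since $tD\geq 0$ and $E\geq 0$, the argument of Theorem \ref{block matrix Thm}$(ii)(b)$ applies verbatim to each $A_t$, and it uses only $B,E\in\bfR_0$. Concretely, the complementarity condition forces $x_2^T(tDx_1+Ex_2)=0$, hence $x_2^TEx_2=0$ with $Ex_2\geq 0$. This gives $x_2=0$, and then $x_1=0$. So every $A_t\in\bfR_0$.

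Consider the homotopy $H(x,t):=\min\{x,A_tx\}$. It is jointly continuous and vanishes only at $x=0$ for every $t$. The homotopy invariance of degree (Proposition 2.1.3 in \cite{facchinei-pang}) then gives $\deg A=\deg A_1=\deg A_0$. I expect this to be the main step to check carefully. The key point is that $0$ is the only zero, uniformly in $t$, so no zeros cross the boundary of a bounded open $\Omega\ni 0$. That uniformity is exactly what the $\bfR_0$ property of each $A_t$ supplies.

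Next, $A_0=\mathrm{diag}(B,E)$, and $\min\{x,A_0x\}=(\min\{x_1,Bx_1\},\min\{x_2,Ex_2\})$ is a product map. Take $\Omega=\Omega_1\times\Omega_2$. The product (Cartesian) property of Brouwer degree gives $\deg A_0=\deg B\cdot\deg E$.

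For the particular cases: in $(i)$, $E=[e]$ with $e>0$, and $\min\{x_2,ex_2\}$ is an increasing homeomorphism of $\R$, so $\deg E=1$. For $(ii)$, $\deg A=\deg B\cdot\deg E\neq 0$ and $A\in\bfR_0$, so the fact recalled earlier that nonzero degree implies $\bfQ$ gives $A\in\bfQ$.
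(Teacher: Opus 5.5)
Your proposal is correct and takes the same route as the paper. The paper also uses the homotopy with off-diagonal blocks $\varepsilon C,\varepsilon D$, keeps every intermediate matrix in $\bfR_0$ by the argument of Theorem~\ref{block matrix Thm}$(ii)(b)$, applies homotopy invariance to get $\deg A=\deg A(0)$, and finishes with the Cartesian product property of degree for $\mathrm{diag}(B,E)$. Your explicit checks, namely the $\bfR_0$ verification for each $A_t$ and $\deg[e]=1$ because $\min\{x_2,ex_2\}$ is an increasing homeomorphism, fill in details the paper leaves to the reader.
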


    \begin{proof}
    That $A\in \bfR_0$ follows from Theorem \ref{block matrix Thm}$(ii)(b)$. More generally, $A(\varepsilon)\in \bfR_0$ for all $0\leq \varepsilon \leq 1$, where
    $$A(\varepsilon):=\begin{bmatrix}
        B & \varepsilon C\\
        \varepsilon D& E
        \end{bmatrix}.$$
        By homotopy invariance of degree,  see \cite{facchinei-pang}, Section 2.1.1,
        $$\deg A(1)=\deg A(0).$$
        As $\deg A(0)=\deg B\cdot \,\deg E$, see \cite{facchinei-pang}, Proposition 2.1.3$(h)$,
        we have $\deg A=\deg B\,\cdot\,\deg E$.
        The additional Item $(i)$ follows easily; the second statement $(ii)$ follows from $\deg A\neq 0\Rightarrow A\in \bfQ$.   
    \end{proof}

%%%%%%%%%%%%%%%%%%%%%%%%%%%%%%%%
\section{Triangular matrices}
A matrix $A=[a_{ij}] \in \MnR$ is said to be {\it upper triangular} if $a_{ij}=0$ for all $i>j$; it is said to be lower triangular if $A^T$ is upper triangular, and triangular if $A$ or $A^T$ is upper triangular.

\begin{theorem}\label{triangular}
    Let $A=[a_{ij}] \in \MnR$ be a triangular matrix.  Then the following are equivalent:
\begin{itemize}
\item [$(i)$] $A\in \bfQ$.
\item [$(ii)$] The diagonal of $A$ is positive.
\item [$(iii)$] $A\in \bfP$.
\item [$(iv)$] $A\in \bfE.$
\item [$(v)$] The diagonal of $A$ is nonnegative and $A\in \bfR_0$.
\item [$(vi)$]  $A\in \bfR^*.$
\end{itemize}
\end{theorem}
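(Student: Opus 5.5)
First reduce to the upper triangular case. If $A$ is lower triangular, then $JAJ$ is upper triangular and its diagonal is the diagonal of $A$ in reverse order. The map $A\mapsto JAJ$ preserves every class in the statement. So we may assume $A$ is upper triangular.

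The plan is to prove the cycle $(i)\Rightarrow(ii)\Rightarrow(iii)\Rightarrow(iv)\Rightarrow(i)$ and then attach $(v)$ and $(vi)$. Several links come directly from Theorem \ref{inclusion theorem}$(i)$: $\bfP\subseteq\bfE\subseteq\bfR\subseteq\bfQ$ gives $(iii)\Rightarrow(iv)\Rightarrow(i)$. For $(ii)\Rightarrow(iii)$, note that every principal submatrix of an upper triangular matrix is upper triangular. Its determinant is therefore the product of the corresponding diagonal entries, which is positive when the diagonal is positive.

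The main step is $(i)\Rightarrow(ii)$, which I will prove by induction on $n$. Write
$$A=\begin{bmatrix} B & C\\ 0 & a_{nn}\end{bmatrix},$$
with $B$ upper triangular of order $n-1$.
\begin{itemize}
\item The last row of $A$ is $(0,\ldots,0,a_{nn})$. By Theorem \ref{inclusion theorem}$(iii)$ this row must contain a positive entry, so $a_{nn}>0$.
\item Now $D=0\geq 0$ and $E=a_{nn}\geq 0$, so Theorem \ref{block matrix Thm}$(ii)(a)$ gives $B\in\bfQ$.
\item By induction, the diagonal of $B$ is positive.
\end{itemize}
Alternatively, $a_{nn}>0$ could be obtained by considering $q=-e_n$ directly. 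I expect no serious obstacle here; the only subtlety is the order of steps, since positivity of $a_{nn}$ must be established before Theorem \ref{block matrix Thm}$(ii)(a)$ can be applied.

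Finally, handle $(v)$ and $(vi)$.
\begin{itemize}
\item $(iii)\Rightarrow(vi)$: $\bfP\subseteq\bfP_0\cap\bfR_0\subseteq\bfR^*$. Here $\bfP\subseteq\bfR_0$ holds because $\bfP\subseteq\bfR\subseteq\bfR_0$.
\item $(vi)\Rightarrow(i)$: $\bfR^*\subseteq\bfR\subseteq\bfQ$.
\item $(iii)\Rightarrow(v)$: this is immediate.
\item $(v)\Rightarrow(ii)$: if the diagonal is nonnegative, every principal minor is a product of nonnegative diagonal entries, so $A\in\bfP_0$. Together with $A\in\bfR_0$, Theorem \ref{inclusion theorem}$(ii)$ gives $A\in\bfQ$, and $(i)\Rightarrow(ii)$ then shows the diagonal is positive.
\end{itemize}

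A direct argument for $(v)\Rightarrow(ii)$ also works. If some $a_{ii}=0$, choose the largest such index $i$. Then $x=e_i$ satisfies $x\geq 0$, $(Ax)_k=a_{ki}$, which is zero for $k>i$ and equals $a_{ii}=0$ for $k=i$. The entries $a_{ki}$ with $k<i$ may be negative, however, so this choice of $x$ is not obviously a solution of $\LCP(A,0)$, which is why the $\bfP_0$ route above is the one I would use.
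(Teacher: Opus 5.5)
Your proof is correct and follows essentially the paper's route. It uses the same reduction via $JAJ$, the same induction for $(i)\Rightarrow(ii)$ through Theorem \ref{block matrix Thm}$(ii)(a)$, and the same observation that a triangular matrix with nonnegative diagonal lies in $\bfP_0$, combined with $\bfR_0$, to handle $(v)$. The only differences are cosmetic: you get $a_{nn}>0$ from the no-nonpositive-row property instead of from a solution of $\LCP(A,-{\bfone})$, and you link $(v)$ and $(vi)$ through $(iii)\Rightarrow(v)\Rightarrow(ii)$ and $(iii)\Rightarrow(vi)\Rightarrow(i)$ rather than the paper's single chain $(iv)\Rightarrow(v)\Rightarrow(vi)\Rightarrow(i)$.
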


\begin{proof} We prove the above equivalence {\it  assuming that $A$ is an upper triangular matrix}; the lower triangular case is seen by considering the matrix $JAJ$ (where $J$ is the permutation matrix with all $1$s on the antidiagonal).\\
    $(i) \Rightarrow (ii)$: 
    Let $A\in \MnR$ and a $\bfQ$-matrix. We establish $(ii)$ by inducting on $n$. When $n=1$, $a_{11}>0$ as $A$ is a $\bfQ$-matrix. Assume $n\geq 2$. By considering a solution $x$ of $\LCP(A,-\bfone)$, we see that $a_{nn}x_n-1=(Ax)_n-1\geq 0$; hence, $a_{nn}>0$. So the last row in our upper triangular matrix $A$ is nonnegative.  Let $B$ be the submatrix of $A$  obtained by removing the last row and last column of $A$. By Item $(ii)(a)$ in Theorem \ref{block matrix Thm}, $B\in \bfQ$. 
    As $B$ is upper triangular, by our induction hypothesis, $B$ has positive diagonal. As we have already shown that $a_{nn}>0$, we see that the diagonal of $A$ is positive. \\
     $(ii) \Rightarrow (iii)$: This holds as any principal minor of an upper triangular matrix is the product of some diagonal entries. \\
     $(iii) \Rightarrow (iv)$: This holds as $\bfP\subseteq \bfE$.
     \\
     $(iv)\Rightarrow (v)$: This holds as  $\bfE\subseteq \bfR_0$ and the diagonal of an $\bfE$-matrix is positive.\\
     $(v) \Rightarrow (vi)$: Suppose $(v)$ holds. As $A$ is upper triangular, $A$ is now a $\bfP_0$-matrix. As $A\in \bfR_0$, we see from Theorem \ref{inclusion theorem}, $A\in \bfR^*$.\\
      $(vi) \Rightarrow (i)$: This follows from Theorem \ref{inclusion theorem}.
    \end{proof}

\begin{example}\label{negative diagonal example}
   Consider the following upper triangular matrices, each a non $\bfQ$-matrix. While the matrix $A_1$ shows that nonnegativity is needed in Item $(v)$ of the theorem, $A_2$ shows that the $\bfS$-property cannot replace the $\bfR_0$-condition (so cannot be part of the above theorem).
$$A_1=\begin{bmatrix}
        -1 & ~~0 \\ ~~0 & -1
    \end{bmatrix}\quad \mbox{and}\quad A_2=\begin{bmatrix}
        0 &1 \\ 0&1
    \end{bmatrix}.$$
    \end{example}

We now consider a variant of the above theorem. To an upper triangular matrix, we adjoin a nonnegative row.

\begin{theorem}\label{Upper triangular+ sw corner}
   Let $A=[a_{ij}] \in \MnR$, $n\geq 2$, be a matrix of the following block form:
    \begin{equation}\label{extension of triangular}A=\begin{bmatrix}
        B &c\\
        d^T& a_{nn}
    \end{bmatrix},\end{equation}
     where $B$ is an upper triangular matrix of order $n-1$, 
     \begin{center}
         $0\leq d \in \R^{n-1}$, and  $a_{nn} >0$.
         \end{center} 
         Then the following statements are equivalent:
    \begin{enumerate}
        \item[$(i)$] $A\in \bfQ$.
        \item [$(ii)$] $A\in \bfE$.
        \item[$(iii)$] The diagonal of $A$ is positive.
         \item[$(iv)$] The diagonal of $A$ is nonnegative and $A\in \bfR_0$.
         \item[$(v)$]  $A\in \bfR^*$.
    \end{enumerate}
   
\end{theorem}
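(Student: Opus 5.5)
We would prove the equivalences through the cycle $(i)\Rightarrow(iii)\Rightarrow(ii)\Rightarrow(iv)\Rightarrow(iii)\Rightarrow(v)\Rightarrow(i)$. Throughout, we view $A$ in the block form (\ref{block form}) with $D=d^T\geq 0$ and $E=[a_{nn}]$, where $a_{nn}>0$. This lets us use Theorem \ref{block matrix Thm} to reduce to the upper triangular block $B$, and Theorem \ref{triangular} to handle $B$ itself.

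\emph{$(i)\Rightarrow(iii)$.} Since $D,E\geq 0$, Theorem \ref{block matrix Thm}$(ii)(a)$ gives $B\in\bfQ$. As $B$ is upper triangular, Theorem \ref{triangular} shows that $B$ has positive diagonal. Together with $a_{nn}>0$, the diagonal of $A$ is positive.

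\emph{$(iii)\Rightarrow(ii)$.} We verify the $\bfE$-condition directly: for every $0\neq x\geq 0$ we need an index $i$ with $x_i\neq 0$ and $x_i(Ax)_i>0$. Write $x=(x_1,x_n)$ with $x_1\in\R^{n-1}$.
\begin{itemize}
\item If $x_n>0$, then $x_n(Ax)_n=x_n(d^Tx_1+a_{nn}x_n)>0$, because $d\geq 0$, $x_1\geq 0$ and $a_{nn}>0$.
\item If $x_n=0$, then $x_1\neq 0$ and the first $n-1$ components of $Ax$ equal $Bx_1$. Since $B$ is upper triangular with positive diagonal, $B\in\bfP\subseteq\bfE$ by Theorem \ref{triangular}. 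Hence some $i$ with $(x_1)_i\neq 0$ satisfies $(x_1)_i(Bx_1)_i>0$.
\end{itemize}

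\emph{$(ii)\Rightarrow(iv)$.} This is immediate: $\bfE\subseteq\bfR_0$, and testing the $\bfE$-condition on $x=e_i$ gives $a_{ii}>0$.

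\emph{$(iv)\Rightarrow(iii)$.} Because $D\geq 0$, Theorem \ref{block matrix Thm}$(i)$ gives $B\in\bfR_0$. The diagonal of $B$ is nonnegative, so item $(v)$ of Theorem \ref{triangular} holds for $B$. Hence $B$ has positive diagonal, and with $a_{nn}>0$ this gives $(iii)$.

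\emph{$(iii)\Rightarrow(v)$.} By Theorem \ref{triangular}, $B\in\bfR^*$. The $1\times 1$ block $E=[a_{nn}]$ with $a_{nn}>0$ lies in $\bfR_0$. Since $D,E\geq 0$, Theorem \ref{block matrix Thm}$(ii)(e)$ gives $A\in\bfR^*$.

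\emph{$(v)\Rightarrow(i)$.} This follows from the inclusions $\bfR^*\subseteq\bfR\subseteq\bfQ$ in Theorem \ref{inclusion theorem}.

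The only step needing some care is $(iii)\Rightarrow(ii)$. Here $A$ need not be a $\bfP$-matrix, because the corner row can destroy the triangular structure. So the $\bfE$-property must be checked by hand using the case split on $x_n$, rather than by appealing to principal minors. The remaining steps are direct applications of Theorems \ref{block matrix Thm}, \ref{triangular} and \ref{inclusion theorem}.
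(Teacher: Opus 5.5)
Your proof is correct and uses essentially the same ingredients as the paper: Theorem \ref{block matrix Thm}$(ii)(a)$ and $(i)$ to pass from $A$ to $B$, Theorem \ref{triangular} for $B$, the nonnegative last row with $a_{nn}>0$ to get the $\bfE$-property (you check this explicitly, where the paper only asserts it), and Theorem \ref{block matrix Thm}$(ii)(e)$ for $\bfR^*$. The only difference is that you arrange the implications as two cycles through $(iii)$, which lets you skip the paper's use of Theorem \ref{block matrix Thm}$(ii)(b)$ in its step $(iii)\Rightarrow(iv)$.
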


\begin{proof}
          $(i) \Rightarrow (ii)$: Let $A\in \bfQ$. As the last row of $A$ is nonnegative, by Theorem \ref{block matrix Thm}(ii)(a), $B$ is a $\bfQ$-matrix. Then by Theorem \ref{triangular}, $B\in \bfE$. Since the last row of $A$ is nonnegative with $a_{nn}>0$, we see that $A\in \bfE$.\\
    $(ii) \Rightarrow (iii)$: This is obvious as the diagonal of an $\bfE$-matrix is positive. \\
    $(iii) \Rightarrow (iv)$: Assume that $(iii)$ holds. Then the first part of $(iv)$ is immediate. Now, since $A$ has positive diagonal, so does $B$. Therefore, from Theorem \ref{triangular}, $B \in \bfR_0$. As $a_{nn} >0$, i.e., $[a_{nn}] \in \bfR_0$, from Theorem \ref{block matrix Thm}$(ii)(b)$, $A \in \bfR_0$.\\
    $(iv)\Rightarrow (v)$:  Suppose $A$ has nonnegative diagonal and $A \in \bfR_0$. Then $B$ has nonnegative diagonal entries; moreover, as $d\geq 0$, from Theorem \ref{block matrix Thm}$(i)$, $B \in  \bfR_0$. Then $B \in \bf R^*$ from Theorem \ref{triangular}. As $a_{nn}>0$, from Theorem \ref{block matrix Thm}$(ii)(e)$,  $A \in \bf R^*$. \\
    $(v) \Rightarrow (i)$: This follows from Theorem \ref{inclusion theorem}.
\end{proof}

%%%%%%%%%%%%%%%%%%%%%%%%%%%%%%%%%%%%
\section{Bidiagonal southwest matrices}
A square matrix is said to be {\it bidiagonal} \cite{higham} if  nonzero entries appear only on the main diagonal and superdiagonal (or subdiagonal). With a slight modification, we define a {\it bidiagonal southwest} matrix as follows:

\begin{definition} 
$A=[a_{ij}] \in \MnR$, $n \geq 2$, is said to be a bidiagonal southwest (bdsw) matrix if $A$ has the following form:
$$A = \begin{bmatrix}
    a_{11} & a_{12} &  & 0 &  &  0\\
     & a_{22} & a_{23} &  &  &  \\
     &  & a_{33} & a_{34} &   &  \\
    0 &    &   & \ddots & \ddots & 0\\
     &  &  &  & a_{(n-1)(n-1)} & a_{(n-1)n} \\
    a_{n1} &  & 0 &  & 0 & a_{nn}
\end{bmatrix}.$$

\end{definition}
In the above, an entry $a_{ij}$ can be zero. In particular, when $a_{n1}=0$, the above matrix reduces to a {\it bidiagonal matrix.} We note that in the recent paper \cite{pratihar-sivakumar2024}, where the concept of `bdsw matrix' was introduced, all the above-mentioned entries $a_{ij}$ are assumed to be nonzero. Under this assumption, the directed graph corresponding to a bdsw matrix is a simple directed cycle with loops, and the inverse of a nonsingular bdsw matrix is full (that is, all entries are nonzero), see the cited paper for further details.
\\

While dealing with bdsw matrices, for ease of presentation, we use the following terminology:
{\it  In the $k$th row of a bdsw matrix, we say that $a_{kk}$ is the diagonal entry; $a_{k(k+1)}$ $($or $a_{n1}$ when $k=n)$ is the (relevant) `off-diagonal' entry.}

 \begin{remark}   An easy computation shows that for the above bdsw matrix,
    \begin{equation}\label{determinant of bdsw matrix}
    \det A= a_{11}a_{22}\cdots\, a_{nn}+(-1)^{n+1}a_{12}a_{23}\cdots a_{(n-1)n}a_{n1}.
    \end{equation}
\end{remark}

 Generally, for a bdsw matrix $A$ and a permutation matrix $P$ $(\neq I)$, the matrix $PAP^T$ need not be a bdsw matrix. However, as we see below, in at least one special case, we retain the bdsw structure.  We omit the routine verification of the following result.
\begin{proposition}\label{special permutation invariance of bdsw property}
Suppose $A=[a_{ij}] \in \MnR$ is a bdsw matrix. Let $1\leq k< n$. With $\{e_1,e_2,\ldots, e_n\}$ denoting the standard coordinate system (of column vectors) in $\Rn$, consider the permutation matrix 
    \begin{equation}\label{Permutation matrix}
        P:=[e_{n-k+1}, e_{n-k+2}, \dotsc, e_n,e_1, \dotsc, e_{n-k-1}, e_{n-k}].
    \end{equation} 
    Let $B:=PAP^T$. Then,
    \begin{itemize}
        \item [$(1)$]
    $B$ is a bdsw matrix,
    \item [$(2)$] $\{b_{11},b_{22},\ldots, b_{nn}\}=\{a_{11},a_{22},\ldots, a_{nn}\}$.
    \item [$(3)$]   $b_{n1}=a_{k(k+1)}, \,b_{nn}=a_{kk}\quad \mbox{and}\quad  b_{kk}=a_{nn},\, b_{k(k+1)}=a_{n1}.$
    \end{itemize}
    \end{proposition}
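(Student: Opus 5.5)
The plan is to identify the permutation $P$ in (\ref{Permutation matrix}) as a cyclic shift of indices and to describe the nonzero pattern of a bdsw matrix in cyclic terms. First I will write $Pe_j=e_{\sigma(j)}$, where $\sigma(j)=n-k+j$ for $j\le k$ and $\sigma(j)=j-k$ for $j>k$. Since $P^Te_{\sigma(j)}=e_j$, we get $P^Te_i=e_{\tau(i)}$ with $\tau=\sigma^{-1}$, and hence
$$b_{ij}=e_i^TPAP^Te_j=(P^Te_i)^TA(P^Te_j)=a_{\tau(i)\tau(j)}.$$
Explicitly, $\tau(i)\equiv i+k \pmod n$, with representatives taken in $\{1,\dots,n\}$. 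So $B$ is obtained from $A$ by cyclically relabelling the indices. As a sanity check, for $n=3$, $k=1$ this gives $P=[e_3,e_1,e_2]$ and $b_{ij}=a_{i+1,j+1}$ with indices mod $3$.

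Next I will record that $A$ is bdsw exactly when $a_{ij}=0$ unless $j\equiv i$ or $j\equiv i+1 \pmod n$. The corner entry $a_{n1}$ is simply the case $i=n$, $j\equiv n+1\equiv 1$. Because $\tau$ is a translation mod $n$, we have $\tau(j)\equiv\tau(i)$ iff $j\equiv i$, and $\tau(j)\equiv\tau(i)+1$ iff $j\equiv i+1$. Hence $b_{ij}=a_{\tau(i)\tau(j)}$ vanishes off the cyclic diagonal/superdiagonal pattern, so $B$ is bdsw; this is item $(1)$. Item $(2)$ is immediate because $b_{ii}=a_{\tau(i)\tau(i)}$ and $\tau$ is a bijection.

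For item $(3)$ I will read off the specific entries from $b_{ij}=a_{\tau(i)\tau(j)}$. Since $\tau(n)=k$ and $\tau(1)=k+1$, we get $b_{nn}=a_{kk}$ and $b_{n1}=a_{k(k+1)}$. The row of $B$ that receives the old last row of $A$ is the row $i$ with $\tau(i)=n$, namely $i=n-k$. That row satisfies $b_{(n-k)(n-k)}=a_{nn}$ and $b_{(n-k)(n-k+1)}=a_{n1}$, because $\tau(n-k+1)=1$.

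The only real obstacle is this index bookkeeping: checking whether $P$ or $P^T$ acts on the left, and locating the row that inherits $(a_{nn},a_{n1})$. From the computation above this row is $n-k$, which coincides with $k$ only when $n=2k$. I will therefore state that part of $(3)$ with the row index $n-k$, or equivalently replace $k$ by $n-k$ in the choice of $P$. This does not affect later uses, whose point is only that any prescribed row can be moved to the last position while preserving the bdsw structure. Everything else is routine verification.
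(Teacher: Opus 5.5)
Your argument is correct, and it is the routine verification the paper omits. With $\tau(i)\equiv i+k \pmod n$ you get $b_{ij}=a_{\tau(i)\tau(j)}$, and this immediately gives (1), (2), and the first half of (3), namely $b_{nn}=a_{kk}$ and $b_{n1}=a_{k(k+1)}$.

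You are also right that the second half of (3) is misstated. For $n=3$, $k=1$ one gets
$B=\begin{bmatrix} a_{22}&a_{23}&0\\ 0&a_{33}&a_{31}\\ a_{12}&0&a_{11}\end{bmatrix}$, so $b_{11}=a_{22}\neq a_{33}$. The correct identities are $b_{(n-k)(n-k)}=a_{nn}$ and $b_{(n-k)(n-k+1)}=a_{n1}$, which agree with the stated ones only when $n=2k$. This corrected form is exactly what the paper itself uses in the proof of Theorem~\ref{bdsw a_n1>0,ann<0}. The other applications rely only on the first half, so your remark that later uses are unaffected is correct.

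Your parenthetical alternative is not an equivalent fix, though. Replacing $k$ by $n-k$ in $P$ repairs the second half but turns the first half into $b_{nn}=a_{(n-k)(n-k)}$. No cyclic shift satisfies both halves as written unless $n=2k$, so the correction belongs in the row index, not in $P$.
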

    
In what follows, we consider four types of bdsw matrices; {\it we exclude bdsw matrices with nonpositive rows} (as they cannot have the $\bfQ$-property, see Theorem \ref{inclusion theorem}$(iii)$).

%%%%%%%%%%%%%%%%%%%%%%%%%%%%%%%%%
\section{Type-I bdsw matrices}
{\it Type-I bdsw matrices are bdsw matrices that contain at least one (nonzero) nonnegative row.} 
For the $\bfQ$-property of Type-I bdsw matrices, we formulate our results based on the signs of $a_{n1}$ and $a_{nn}$.

\begin{itemize}
\item Case 1: $a_{n1}\geq 0, \,a_{nn}>0$. (See Theorem \ref{Bdsw, an1>=0, ann>0}.)
% \item  Case 2: $a_{n1}>0, \,\,a_{nn}>0.$ (See Theorem \ref{Bdsw, an1>0, ann>0}.) \TR{We can combine Case 1 and Case 2 and use the Theorem \ref{Upper triangular+ sw corner}.} \TB{Yes, go ahead and make changes throughout the paper.}
    \item Case 2: $a_{n1}> 0,\,a_{nn}= 0$. (See Theorem \ref{Bdsw, an1>0, ann=0}.)
    \item Case 3: $a_{n1}<0,\,a_{nn}>0$. (See Theorem \ref{bdsw with kth row nonnegative}.)
     \item Case 4: $a_{n1}> 0,\,a_{nn}< 0$. ($A \not\in \bfQ$, see Theorem \ref{bdsw a_n1>0,ann<0}.)
   \end{itemize}

The following result is a special case of Theorem \ref{Upper triangular+ sw corner}.
\begin{theorem}\label{Bdsw, an1>=0, ann>0}
     Let $A=[a_{ij}]\in \MnR$, $n\geq 2$, be a bdsw matrix with 
   $$a_{n1} \geq 0,\,a_{nn}> 0.$$ 
     Then, the equivalent statements of Theorem \ref{Upper triangular+ sw corner}
hold.
\end{theorem}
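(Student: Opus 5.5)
The plan is to verify that a bdsw matrix $A$ with $a_{n1}\geq 0$ and $a_{nn}>0$ fits the block form (\ref{extension of triangular}) of Theorem \ref{Upper triangular+ sw corner}, and then simply invoke that theorem. We partition
$$A=\begin{bmatrix} B & c\\ d^T & a_{nn}\end{bmatrix},$$
where $B$ is the leading principal submatrix of order $n-1$, $c=(0,\ldots,0,a_{(n-1)n})\in\R^{n-1}$, and $d=(a_{n1},0,\ldots,0)\in\R^{n-1}$.

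We now check the hypotheses of Theorem \ref{Upper triangular+ sw corner} one at a time.
\begin{itemize}
\item $B$ is upper triangular. Its only possibly nonzero entries are $a_{ii}$ for $1\leq i\leq n-1$ and $a_{i(i+1)}$ for $1\leq i\leq n-2$, all of which lie on or above the diagonal. When $n=2$, $B=[a_{11}]$, which is trivially upper triangular.
\item $d\geq 0$. This holds because its only possibly nonzero entry is $a_{n1}\geq 0$.
\item $a_{nn}>0$. This is assumed.
\end{itemize}
Since $n\geq 2$, all hypotheses of Theorem \ref{Upper triangular+ sw corner} hold. Hence its statements $(i)$--$(v)$ are equivalent for $A$, which is exactly the claim.

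There is no genuine obstacle here; the argument is pure bookkeeping of the bdsw sparsity pattern. The only point needing care is the degenerate case $n=2$. There the southwest entry $a_{21}$ is itself the subdiagonal entry, and $d=(a_{21})$ is a scalar, but the block description above still applies verbatim.
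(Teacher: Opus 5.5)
Your proposal is correct and matches the paper, which states this result as a direct special case of Theorem~\ref{Upper triangular+ sw corner} without further proof. Your check that the leading block is upper (in fact bidiagonal) triangular, that $d=(a_{n1},0,\ldots,0)^T\geq 0$, and that $a_{nn}>0$ is exactly the verification that observation requires.
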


% The following result comes from Theorem  \ref{Upper triangular+ sw corner}. 

% \begin{theorem}\label{Bdsw, an1>0, ann>0}
%      Let $A=[a_{ij}]\in \MnR$, $n\geq 2$, be a bdsw matrix with 
%    $$a_{n1}> 0,\,a_{nn}> 0.$$ 
%      Then, the equivalent statements of Theorem \ref{Upper triangular+ sw corner}
% hold.
    
%     \end{theorem}

In our next result, we consider Case 2. We observe that the condition $A\in \bfE$ is no longer part of the equivalence.
\begin{theorem}\label{Bdsw, an1>0, ann=0}
     Let $A=[a_{ij}]\in \MnR$, $n\geq 2$, be a bdsw matrix with 
   $$a_{n1}> 0,\,a_{nn}= 0.$$ 
     Then, the following are equivalent:
     \begin{itemize}
          \item [$(i)$] $A\in \bfQ$.
        \item [$(ii)$] $a_{ii}>0$ for $i=1,2,\dotsc,n-1$, and $A$ has negative superdiagonal.
          \item [$(iii)$] $A\in \bfR^*$.
          \end{itemize}
    
Furthermore, in each case, $\det A>0$.
\end{theorem}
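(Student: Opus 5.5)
The plan is to prove $(i)\Rightarrow(ii)\Rightarrow(iii)\Rightarrow(i)$ and then read off $\det A>0$ from formula (\ref{determinant of bdsw matrix}). The implication $(iii)\Rightarrow(i)$ is immediate from Theorem \ref{inclusion theorem}, since $\bfR^*\subseteq\bfR\subseteq\bfQ$. For the determinant, note that $a_{nn}=0$ kills the diagonal product. Under $(ii)$ we get
$$\det A=(-1)^{n+1}a_{12}\cdots a_{(n-1)n}a_{n1}=(-1)^{n+1}(-1)^{n-1}|a_{12}\cdots a_{(n-1)n}|\,a_{n1}>0 .$$
All three statements are equivalent, so this holds in each case.

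For $(ii)\Rightarrow(iii)$ I will verify $\bfR_0$ and $\bfE_0$ directly.

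\emph{$\bfR_0$.} Let $0\le x\perp Ax\ge 0$. Row $n$ gives $(Ax)_n=a_{n1}x_1$, so $x_n a_{n1}x_1=0$.
If $x_n>0$, then $x_1=0$. Row $1$ then gives $a_{12}x_2\ge 0$, which forces $x_2=0$ because $a_{12}<0$. Row $2$ then forces $x_3=0$, and continuing in this way we reach $x_n=0$, a contradiction. Hence $x_n=0$. The first $n-1$ coordinates then solve $\LCP(B,0)$, where $B$ is the leading upper triangular block with positive diagonal. This $B$ is a $\bfP$-matrix, so those coordinates vanish as well.

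\emph{$\bfE_0$.} I use the max-criterion: for $0\ne x\ge 0$ we need some $i$ with $x_i\ne 0$ and $x_i(Ax)_i\ge 0$.
If $x_n>0$, take $i=n$; then $x_n(Ax)_n=a_{n1}x_1x_n\ge 0$.
If $x_n=0$, take $i$ to be the largest index in the support. Then $i<n$, $x_{i+1}=0$, and $(Ax)_i=a_{ii}x_i>0$.
So $A\in\bfR_0\cap\bfE_0=\bfR^*$.

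The main work is $(i)\Rightarrow(ii)$. The tool is a family of vectors $q$ with $q_n=-1$. For any solution $x$ of $\LCP(A,q)$, row $n$ reads $a_{n1}x_1-1\ge 0$, so $x_1>0$ and hence $w_1:=(Ax+q)_1=0$. The idea is to propagate this forward along the chain of rows, using that each row $k<n$ involves only $x_k$ and $x_{k+1}$.

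\emph{The superdiagonal.} I would first take $q=(1,\dots,1,-1)$. Then $w_1=0$ reads $a_{11}x_1+a_{12}x_2=-1$, and we argue by induction on $k$. Suppose $x_k>0$, so $w_k=0$. This gives $a_{kk}x_k+a_{k(k+1)}x_{k+1}=-1<0$, which forces $x_{k+1}>0$ or $a_{kk}<0$. To exclude the bad branches I would vary the positive entries of $q$, taking $q_k$ large or zero as needed. The goal at each step is that $x_{k+1}>0$ with $a_{k(k+1)}x_{k+1}<0$, so that every $a_{k(k+1)}$ is negative. I expect this to fix the sign of each superdiagonal entry, including $a_{(n-1)n}$, where the chain reaches row $n$.

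\emph{The diagonal.} For $a_{kk}>0$ ($k<n$) I would argue by contradiction. If $a_{kk}\le 0$, then row $k$ needs a positive entry (Theorem \ref{inclusion theorem}$(iii)$), so $a_{k(k+1)}>0$. This contradicts the previous step.

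\emph{Alternative route.} Should the propagation argument become too case-heavy, I would fall back on Proposition \ref{special permutation invariance of bdsw property}, cyclically permuting so that a nonnegative row sits last. I would then combine this with Theorem \ref{block matrix Thm}$(ii)(a)$ and Theorem \ref{triangular} on the leading block.

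The main obstacle I anticipate is exactly this bookkeeping for $(i)\Rightarrow(ii)$: choosing the test vectors $q$ so that each bad sign pattern produces an unsolvable LCP.
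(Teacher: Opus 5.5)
\textbf{Gap in $(i)\Rightarrow(ii)$.} Your proofs of $(ii)\Rightarrow(iii)$, $(iii)\Rightarrow(i)$ and $\det A>0$ are fine. For $(ii)\Rightarrow(iii)$, your $\bfR_0$ chain argument matches the paper's, and the $\bfE_0$ max-criterion is a valid substitute for the paper's direct check that $\LCP(A,d)$ has only the zero solution for every $d>0$.

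The problem is $(i)\Rightarrow(ii)$. You try to fix the superdiagonal signs first, using only test vectors with $q_n=-1$, and then derive the diagonal from row positivity. The bad branch $a_{kk}<0$, $a_{k(k+1)}>0$ cannot be excluded by varying $q_1,\dots,q_{n-1}$ while keeping $q_n=-1$. Take $A=\begin{bmatrix}-1&1\\1&0\end{bmatrix}$, which satisfies the hypotheses. For every $q=(q_1,-1)^T$, $\LCP(A,q)$ is solvable: use $x=(1,1-q_1)^T$ if $q_1\le 1$, and $x=(q_1,0)^T$ if $q_1\ge 1$. Yet $A\notin\bfQ$: for $q=(-1,1)^T$, $w_2=x_1+1>0$ forces $x_2=0$, and then $w_1=-x_1-1<0$. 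So no vector in your family detects $a_{12}>0$ (or $a_{11}<0$). Your diagonal step rests on the superdiagonal step, so it is unsupported too.

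\textbf{How to close it.} The missing ingredient is a test vector with $q_n>0$, which is what Theorem \ref{block matrix Thm}$(ii)(a)$ encodes. Row $n$ is already nonnegative, so no cyclic permutation is needed. The leading block $B$ of order $n-1$ is then in $\bfQ$, and Theorem \ref{triangular} gives $a_{ii}>0$ for $i<n$. This must come first.

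Once the diagonal is positive, your propagation with $q=(1,\dots,1,-1)^T$ has no bad branch. Row $n$ forces $x_1>0$. Whenever $x_k>0$, complementarity gives $a_{k(k+1)}x_{k+1}=-1-a_{kk}x_k<0$, so $x_{k+1}>0$ and $a_{k(k+1)}<0$. Running this through $k=n-1$ gives every superdiagonal entry negative. This is the paper's argument, phrased there as backward substitution from the least $k$ with $a_{k(k+1)}\ge 0$.

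Your ``alternative route'' supplies exactly the first half. But it is a prerequisite for the propagation, not a replacement for it, and it must come before the superdiagonal step rather than after.
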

\begin{proof}
      It is given that $A$ is a bdsw matrix. Let $B$ be the leading principal submatrix of $A$ of order $n-1$. Note that $B$ is a bidiagonal matrix.\\
    $(i) \Rightarrow (ii)$: Let $A\in \bfQ$. As the last row of $A$ is nonnegative, by Theorem \ref{block matrix Thm}$(ii)(a)$, $B$ is a $\bfQ$-matrix. Then by Theorem \ref{triangular}, $B$ has a positive diagonal. Therefore $a_{ii}>0$ for all $i=1,2,\dotsc,n-1$. We now show that $A$ has negative superdiagonal by proving the following:\\
    {\it Claim:} $a_{i(i+1)} <0$ for all $1\leq i \leq n-1$.\\
    Assuming the contrary, consider the least index $k$, $1 \leq k \leq n-1$, such that $a_{k(k+1)} \geq 0$. In $\Rn$, consider  $$q=(1,1, \dotsc, 1, -1)^T$$
    and let $x=(x_1,x_2,\ldots, x_n)^T$ be a solution of $\LCP(A,q)$.\\
    We first show that $k>1$. Assuming the contrary, suppose $k=1$. In this case, $a_{11}>0$ and $a_{12}\geq 0.$ Then, the complementarity condition $x_1(a_{11}x_1+a_{12}x_2+1)=0$ implies that $x_1=0$. But then, \begin{equation}\label{x1 cannot be zero}
    0\leq (Ax+q)_n=a_{n1}x_1+a_{nn}x_n -1=a_{n1}0+0x_n-1=-1
    \end{equation}
    yields a contradiction. Hence, $k>1$.
    Now, as $a_{kk}>0,\, a_{k(k+1)} \geq 0$, and $q_k=1$, from   the complementarity condition
    $x_k(Ax+q)_k=0$, we get $x_k=0$. Then, 
    $$0=x_{k-1}(Ax+q)_{k-1}=x_{k-1}(a_{(k-1)(k-1)}x_{k-1}+a_{(k-1)k}x_k+1)$$ implies, as the diagonal of $B$ is positive, that $x_{k-1}=0$. (Note that the coefficient $a_{(k-1)k}$ plays no role here.) Repeating this (backward substitution) argument, we see that $x_{k-2}=0$, etc., eventually getting $x_1=0$. But then, (\ref{x1 cannot be zero}) yields a contradiction. This proves our claim. Hence, all superdiagonal elements are negative. \\
    $(ii) \Rightarrow (iii)$: Suppose $(ii)$ holds. First, we show that $A\in \bfR_0$. Consider $0\leq x\perp Ax\geq 0$, let $x=(\tilde{x},x_n)$, where $x_n\in \R$. Suppose, if possible, $x_n\neq 0$ so $x_n > 0$. Then the last complementarity condition $x_n(a_{n1}x_1+a_{nn}x_n)=0$ together with $a_{n1}>0$ and $a_{nn}=0$, yields $x_1=0$. Now, the first condition $(Ax)_1\geq 0$, in conjunction with $a_{12}<0$, implies $x_2=0$. This, in turn, with $a_{23}<0$ and $(Ax)_2\geq 0$ implies $x_3=0$. A continuation of this argument eventually leads to $x_{n-1}=0$ and, via $(Ax)_{n-1}\geq 0$, to $x_n=0$. This contradicts our assumption that $x_n\neq 0$. Therefore $x_n=0$, and $\tilde{x}$ is a solution of $\LCP(B,0)$; as $B$ is bidiagonal with positive diagonal, by Theorem \ref{Upper triangular+ sw corner}, $B\in \bfR_0$; hence $\tilde{x}=0$. Thus, $A \in \bf R_0$.\\
 Now, we show that $A\in \bfR(d)$ for any $d>0$. Let $d>0$ and $y$ be a solution of $\LCP(A,d)$. Let $y=(\widetilde{y},y_n)$, where $y_n\in \R$. As $d> 0$, $a_{n1}> 0$, and $a_{nn}=0$, we see, by complementarity, $y_n=0$. But then, $\widetilde{y}$ becomes a solution of $\LCP(B,\widetilde{d})$, where $d=(\widetilde{d},d_n)$. As $B$ (which is a bidiagonal matrix) has a positive diagonal, by Theorem \ref{Upper triangular+ sw corner},  $B\in \bfR(\widetilde{d})$. Thus, $\widetilde{y}=0$. We see that $A\in \bfR(d)$ for any $d>0$ and hence $A \in \bfR^*$.\\
 $(iii) \Rightarrow (i)$: This follows from Theorem \ref{inclusion theorem}.\\
 Now suppose that one of the equivalent conditions, say, Item $(ii)$, holds. Then $A$ has a negative superdiagonal. As $A$ is a bdsw matrix with $a_{nn}=0$, from (\ref{determinant of bdsw matrix}),
    $$\det A= (-1)^{n+1}a_{n1} \prod\limits_{i=1}^{n-1}a_{i(i+1)}.$$ As $a_{n1}>0$ and $a_{i(i+1)}<0$ for all $i \leq n-1$, we have $\det A >0$.
 \end{proof} 

In Theorem \ref{Bdsw, an1>=0, ann>0}, the condition $A\in \bfE$ is part of the equivalence. However, in Theorem \ref{Bdsw, an1>0, ann=0} this is no longer the case. 
\begin{example}
$$    A=\begin{bmatrix}
        1 & -1\\1 & ~~0
        \end{bmatrix}
$$
is a bdsw matrix satisfying condition $(ii)$ of the above theorem, but $A\not \in \bfE$.
\end{example}

The following example shows that in the above theorem, the determinant being positive is only a necessary condition, but not sufficient.
\begin{example} The bdsw matrix $$A=\begin{bmatrix}
    1&~~1&~~0&~~0\\
    0&~~1&~~1&~~0\\
    0&~~0&~~1&-1\\
    1&~~0&~~0&~~0
\end{bmatrix}$$ has positive determinant.
It is easy to  verify that $\LCP(A,q)$ has no solution for $q=(0,0,0,-1)^T$. Thus, $A \notin \bfQ$.

\end{example}

Motivated by the above theorem, we ask if a similar result holds for an upper triangular matrix of the form (\ref{extension of triangular}), where $a_{nn}=0$ and $a_{n1}>0$. The following example answers this in the negative.

\begin{example}\label{Counter for extension of bdsw}
    Consider the following matrix $$A=\begin{bmatrix}
        1&-1&~~1\\
        0&~~1&-1\\
        1&~~0&~~0
    \end{bmatrix}.$$ Here $A$ has negative superdiagonal with $a_{11},a_{22}>0$, but $\LCP(A,q)$ has no solution for $q=(0,0,-1)^T$. Therefore $A \notin \bfQ$. 
\end{example}

We now consider Case 3.

\begin{theorem}\label{bdsw with kth row nonnegative}
    Suppose $A=[a_{ij}]\in \MnR$, $n\geq 2$, is a Type-I bdsw matrix with 
   $$a_{n1}<0,\,a_{nn}> 0.$$
   Let the $k$th row of $A$, $k\neq n$,  be nonnegative. Then the following statements are equivalent:
     \begin{itemize}
          \item [$(i)$] $A\in \bfQ$.
        \item [$(ii)$] One of the following holds: 
        \begin{enumerate}
            \item[$(a)$] The diagonal of $A$ is positive.
            \item[$(b)$] $a_{kk}=0,\,a_{k(k+1)}>0$ and $a_{ii}>0,\, a_{i(i+1)}<0$ for all $i \neq k$. 
        \end{enumerate}
            \item [$(iii)$] $A\in\bfR^*$.
          \end{itemize}
\end{theorem}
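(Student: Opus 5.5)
The plan is to reduce to the Case-2 theorem (Theorem \ref{Bdsw, an1>0, ann=0}) and to Theorem \ref{Bdsw, an1>=0, ann>0} using the cyclic permutation of Proposition \ref{special permutation invariance of bdsw property}. With $P$ as in (\ref{Permutation matrix}) for this $k$, set $B:=PAP^T$. By that proposition, $B$ is a bdsw matrix with $b_{nn}=a_{kk}$, $b_{n1}=a_{k(k+1)}$, $b_{kk}=a_{nn}>0$ and $b_{k(k+1)}=a_{n1}<0$. The remaining rows of $B$ are the remaining rows of $A$, cyclically shifted. In particular, the last row of $B$ is the nonnegative $k$th row of $A$, so $b_{n1}\ge 0$ and $b_{nn}\ge 0$, not both zero. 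Since $\bfQ$, $\bfR^*$ and the sign patterns are invariant under $A\mapsto PAP^T$, it suffices to prove the equivalence for $B$.

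There are two cases.

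\emph{Case $a_{kk}>0$.} Then $b_{n1}\ge0$ and $b_{nn}>0$, so Theorem \ref{Bdsw, an1>=0, ann>0} gives $B\in\bfQ\iff B\in\bfR^*\iff$ the diagonal of $B$ is positive. Since the diagonals of $A$ and $B$ coincide as sets, this is $(ii)(a)$. Alternative $(ii)(b)$ cannot occur here, because it requires $a_{kk}=0$.

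\emph{Case $a_{kk}=0$.} Then $b_{n1}=a_{k(k+1)}>0$, because the row is nonzero. Theorem \ref{Bdsw, an1>0, ann=0} gives $B\in\bfQ\iff B\in\bfR^*\iff b_{ii}>0$ for $i<n$ and $B$ has negative superdiagonal. The remaining work is to translate this condition back to $A$. Under the cyclic relabeling, the pairs $(b_{ii},b_{i(i+1)})$ for $i<n$ are exactly the pairs $(a_{jj},a_{j(j+1)})$ for $j\ne k$, where $a_{n(n+1)}$ means $a_{n1}$. Hence the condition reads: $a_{jj}>0$ and $a_{j(j+1)}<0$ for all $j\neq k$. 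This is precisely $(ii)(b)$, since $a_{n1}<0$ is already given. Alternative $(ii)(a)$ fails in this case, as $a_{kk}=0$.

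Combining the two cases yields $(i)\iff(ii)\iff(iii)$. There is one subtlety. If $a_{kk}=0$, then $(ii)(a)$ is false, and both sides of the equivalence are governed by the Case-2 theorem. If $a_{kk}>0$, then $(ii)(b)$ is false, and both sides are governed by Case 1. So the disjunction in $(ii)$ matches exactly.

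The main obstacle is the bookkeeping in the second case. One must verify carefully, directly from the column description of $P$ in (\ref{Permutation matrix}), that the superdiagonal entries of $B$ are exactly $\{a_{j(j+1)}: j\ne k,\ j<n\}\cup\{a_{n1}\}$, with each diagonal entry paired with its own row's off-diagonal entry. Proposition \ref{special permutation invariance of bdsw property} only records the diagonal as a set and a few specific entries. If the general pairing is not obvious, I would check that conjugation by $P$ maps row $j$ of $A$ to row $j-k \pmod n$ of $B$, preserving within each row the diagonal/off-diagonal roles. This confirms that the rows of $B$ are the rows of $A$ rotated.
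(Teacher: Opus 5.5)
Your proposal is correct and follows the paper's proof: conjugate by the cyclic permutation $P$ of (\ref{Permutation matrix}) so that the nonnegative $k$th row becomes the last row of $B=PAP^T$. Then apply Theorem \ref{Bdsw, an1>=0, ann>0} when $a_{kk}>0$ and Theorem \ref{Bdsw, an1>0, ann=0} when $a_{kk}=0$; the paper makes the same case split, only inside each implication rather than up front. One slip, which does not affect the argument: under your correct row map $j\mapsto j-k \pmod{n}$, the last row of $A$ lands in row $n-k$ of $B$, so $b_{(n-k)(n-k)}=a_{nn}$ and $b_{(n-k)(n-k+1)}=a_{n1}$, not $b_{kk}$ and $b_{k(k+1)}$ as you quoted from Proposition \ref{special permutation invariance of bdsw property} (the paper itself uses $b_{(n-k)(n-k)}=a_{nn}$ in the proof of Theorem \ref{bdsw a_n1>0,ann<0}), but you never use these entries.
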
 

\begin{proof}
    We fix a $k$ such that  $a_{kk} \geq 0,\, a_{k(k+1)} \geq 0$. Consider the permutation matrix $P$ as in (\ref{Permutation matrix}). Then $B=[b_{ij}]:=PAP^T$ is also a bdsw matrix with the same diagonals as those of $A$ (except permutated) and $b_{n1}=a_{k(k+1)}\geq 0,\, b_{nn}=a_{kk} \geq 0.$ \\
     $(i) \Rightarrow (ii)$: Assume that $A \in \bfQ$. Then $B\in \bfQ$; hence, at least one of $b_{n1},\,b_{nn}$ is positive. We consider two cases.\\
     {\it Case $1:$} $b_{n1}\geq 0,\,b_{nn}>0$.\\ Since $B$ is a bdsw matrix of the form (\ref{extension of triangular}), from Theorem \ref{Bdsw, an1>=0, ann>0}, we can say $B$ has  positive diagonal entries; consequently, $A$ has positive diagonal entries.\\
     {\it Case $2:$} $b_{n1}>0,\,b_{nn}=0$. \\In this case, $a_{k(k+1)}>0,\, a_{kk}=0$. Furthermore, from Theorem \ref{Bdsw, an1>0, ann=0}, we can say that $b_{ii}>0$ for all $i \leq n-1$ and $B$ has negative superdiagonal. Therefore, $a_{ii} >0$ and $a_{i(i+1)} <0$ for all $i \neq k$.\\
     $(ii) \Rightarrow (iii)$: Suppose that Item $(a)$ of $(ii)$ holds, i.e., $A$ has a positive diagonal. Then, $B$ has a positive diagonal. As $b_{nn}>0,\,b_{n1}\geq 0$, from Theorem \ref {Bdsw, an1>=0, ann>0} we can say that $B \in \bf R^*$ and hence, $A\in \bfR^*$.\\
     Suppose Item $(b)$ of $(ii)$ holds. Here $b_{nn}=0,\, b_{n1}>0$. Then using Theorem \ref{Bdsw, an1>0, ann=0} we can say that $B\in \bfR^*$; consequently, $A \in \bfR^*$.\\
     $(iii) \Rightarrow (i)$: This follows from Theorem 2.1.  
\end{proof}

We finally consider Case 4, with a negative conclusion.

\begin{theorem}\label{bdsw a_n1>0,ann<0}
    Suppose $A=[a_{ij}]\in \MnR$, $n\geq 2$, be a Type-I bdsw matrix with 
   $$a_{n1}>0,\,a_{nn}< 0.$$
   Then, $A\not\in \bfQ$.
\end{theorem}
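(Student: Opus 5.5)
The plan is to reduce to the two Type-I cases already settled (Theorems \ref{Bdsw, an1>=0, ann>0} and \ref{Bdsw, an1>0, ann=0}) by means of the cyclic permutation of Proposition \ref{special permutation invariance of bdsw property}.

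Suppose, to the contrary, that $A\in\bfQ$. Since $A$ is Type-I, it has a nonnegative row. That row cannot be the $n$th, because $a_{nn}<0$. So there is some $k$ with $1\le k<n$ such that $a_{kk}\ge 0$ and $a_{k(k+1)}\ge 0$. By our standing assumption every row has a positive entry, so at least one of these two entries is positive.

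Next, form $B=PAP^T$ with $P$ as in (\ref{Permutation matrix}). By Proposition \ref{special permutation invariance of bdsw property}, $B$ is a bdsw matrix whose entries satisfy
\[
b_{n1}=a_{k(k+1)}\ge 0,\quad b_{nn}=a_{kk}\ge 0,\quad b_{kk}=a_{nn}<0,\quad b_{k(k+1)}=a_{n1}>0 .
\]
The $\bfQ$-property is invariant under $A\mapsto PAP^T$, so $B\in\bfQ$. The last row of $B$ is nonnegative and has a positive entry, so exactly one of two cases occurs: either $b_{nn}>0$ (with $b_{n1}\ge 0$), or $b_{nn}=0$ and $b_{n1}>0$.

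In the first case, Theorem \ref{Bdsw, an1>=0, ann>0} (that is, the equivalences of Theorem \ref{Upper triangular+ sw corner}) shows that $B$ has positive diagonal. This contradicts $b_{kk}=a_{nn}<0$. In the second case, Theorem \ref{Bdsw, an1>0, ann=0}$(i)\Rightarrow(ii)$ gives $b_{ii}>0$ for all $i\le n-1$. Since $k\le n-1$, this again contradicts $b_{kk}<0$. (That theorem would also force $b_{k(k+1)}<0$, contradicting $b_{k(k+1)}=a_{n1}>0$ as well.) Hence $A\notin\bfQ$.

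The only delicate step is the bookkeeping in Proposition \ref{special permutation invariance of bdsw property}. One must confirm that the permutation sends the $n$th row of $A$ to row $k$ of $B$ with its diagonal entry preserved, so that the negative entry $a_{nn}$ lands in a position $b_{kk}$ with $k<n$. Item (3) of that proposition states exactly this, so no further computation is needed.
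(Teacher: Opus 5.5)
Your proof is correct and is the paper's argument: move a nonnegative row $k<n$ to the bottom with the permutation (\ref{Permutation matrix}), then apply Theorem \ref{Bdsw, an1>=0, ann>0} or Theorem \ref{Bdsw, an1>0, ann=0}, according to whether $b_{nn}>0$ or $b_{nn}=0$. This forces $b_{11},\dots,b_{(n-1)(n-1)}>0$, which contradicts the relocated entry $a_{nn}<0$.

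One bookkeeping correction. Computing $PAP^T$ directly gives $b_{(n-k)(n-k)}=a_{nn}$ and $b_{(n-k)(n-k+1)}=a_{n1}$, so the $n$th row of $A$ goes to row $n-k$ of $B$, not row $k$. The second half of item $(3)$ in Proposition \ref{special permutation invariance of bdsw property} is a misprint, and the paper's own proof uses $b_{(n-k)(n-k)}=a_{nn}$. Since $1\le n-k\le n-1$, both your contradictions, including the one with the superdiagonal, go through unchanged.
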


\begin{proof}
    As $A$ is a Type-I bdsw matrix, we assume that the $k$th row of $A$, $k\neq n$, is nonnegative. Consider $B=[b_{ij}]:=PAP^T$, where $P$ is the permutation matrix specified in (\ref{Permutation matrix}). Then $B$ is a bdsw matrix with last row nonnegative and  $b_{(n-k)(n-k)}=a_{nn}<0$. Suppose $A\in \bfQ$, in which case, $B\in \bfQ$. Then, depending on the entries of the last row of $B$, we can apply either Theorem \ref{Bdsw, an1>=0, ann>0} or Theorem \ref{Bdsw, an1>0, ann=0} to conclude that the first $n-1$ diagonal entries of $B$ are positive. However, this is not possible as $b_{(n-k)(n-k)}=a_{nn}<0$. This contradiction shows that $A\not \in \bfQ$.
\end{proof}

By considering various cases of a Type-I bdsw matrix, we arrive at the following necessary condition.
\begin{corollary}
    Suppose $A$ is a Type-I bdsw matrix. If $A\in \bfQ$, then the diagonal of $A$ is nonnegative.
\end{corollary}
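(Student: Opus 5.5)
The plan is to reduce, via the cyclic permutation of Proposition \ref{special permutation invariance of bdsw property}, to the situation where the nonnegative row is the last row, and then read off the diagonal from the theorems already proved. Let $A$ be a Type-I bdsw matrix with $A\in\bfQ$, and fix an index $k$ such that the $k$th row of $A$ is nonnegative, i.e., $a_{kk}\geq 0$ and the relevant off-diagonal entry of that row is nonnegative.

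\emph{Reduction.} If $k=n$, set $B:=A$. Otherwise, let $B:=PAP^T$ with $P$ as in (\ref{Permutation matrix}). By that proposition, $B$ is a bdsw matrix whose diagonal is a rearrangement of the diagonal of $A$, and whose last row is $(b_{n1},b_{nn})=(a_{k(k+1)},a_{kk})\geq 0$. Since $\bfQ$ is invariant under $A\mapsto PAP^T$, we have $B\in\bfQ$. It therefore suffices to show that $B$ has nonnegative diagonal.

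\emph{Case analysis.} By Theorem \ref{inclusion theorem}$(iii)$, the last row of $B$ has a positive entry, so $b_{n1}\ge 0$ and $b_{nn}\ge 0$ with at least one of them positive. This leaves two cases.
\begin{itemize}
\item If $b_{nn}>0$ (and $b_{n1}\geq 0$), then Theorem \ref{Bdsw, an1>=0, ann>0} applies. Via the equivalences of Theorem \ref{Upper triangular+ sw corner}, the diagonal of $B$ is positive.
\item If $b_{nn}=0$, then $b_{n1}>0$, and Theorem \ref{Bdsw, an1>0, ann=0} gives $b_{ii}>0$ for $i\le n-1$ together with $b_{nn}=0$. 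The diagonal is again nonnegative.
\end{itemize}
In either case the diagonal of $B$, and hence that of $A$, is nonnegative.

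The only point needing care, and the one I expect to be the main obstacle, is the bookkeeping in the reduction: one must check that the proposition's relabelling indeed places the $k$th row of $A$ as the last row of $B$ with the correct entries. This is exactly item $(3)$ of Proposition \ref{special permutation invariance of bdsw property}. Everything else is a direct application of the earlier theorems. Alternatively, one can simply cite Theorems \ref{Bdsw, an1>=0, ann>0}--\ref{bdsw a_n1>0,ann<0} case by case according to the signs of $a_{n1},a_{nn}$, since the case $a_{nn}<0$ with $a_{n1}\le 0$ gives a nonpositive row, which is excluded.
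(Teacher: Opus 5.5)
Your proof is correct and is essentially the paper's argument. The paper obtains the corollary by running through the sign cases of $(a_{n1},a_{nn})$ in Theorems \ref{Bdsw, an1>=0, ann>0}--\ref{bdsw a_n1>0,ann<0}, and the nontrivial cases among those are themselves proved by your cyclic-permutation reduction to a nonnegative last row followed by Theorem \ref{Bdsw, an1>=0, ann>0} or Theorem \ref{Bdsw, an1>0, ann=0}; you simply do that reduction once at the outset, and the two facts you need from item $(3)$, namely $b_{n1}=a_{k(k+1)}$ and $b_{nn}=a_{kk}$, are indeed correct.
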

%%%%%%%%%%%%%%%%%%%%%%%%%%%%%%%
\section{Type-II bdsw matrices}
{\it A Type-II bdsw matrix is a bdsw matrix, where in each row, the diagonal entry is positive, and the (relevant) off-diagonal entry is negative.} Such a matrix is a $\bfZ$-matrix. So,  a Type-II bdsw matrix has the  $\bfQ$-property if and only if it is a $\bfP$-matrix (equivalently, an $\bfS$-matrix). Since all the proper principal minors of a Type-II bdsw matrix are positive, we see that a Type-II bdsw matrix $A$ is in $\bfQ$ if and only if $\det A>0$. For the record, we state this as follows:

\begin{theorem} \label{type2 bdsw matrix theorem}
A Type-II bdsw matrix is in $\bfQ$ (equivalently, in $\bf P$, or in $\bfR^*$) if and only if its determinant is positive.
\end{theorem}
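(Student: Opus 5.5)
The plan is to run the $\bfZ$-matrix route sketched just before the statement. Let $A$ be a Type-II bdsw matrix. Then $a_{ii}>0$ for all $i$, and every off-diagonal entry of $A$ is either zero or one of the negative entries $a_{i(i+1)}$ and $a_{n1}$. Hence $A$ is a $\bfZ$-matrix. By the $\bfZ$-matrix characterization recalled in the introduction (see \cite{cottle-pang-stone}), $A\in\bfQ$ if and only if $A\in\bfS$, if and only if $A\in\bfP$. So the problem reduces to showing that $A\in\bfP$ if and only if $\det A>0$. The inclusion $\bfP\subseteq\bfR^*$ holds because $\bfP\subseteq\bfP_0$ and, by Theorem \ref{inclusion theorem}$(ii)$, $\bfP\cap\bfQ\subseteq\bfR_0$; equivalently, $\bfP\subseteq\bfE\subseteq\bfR$ together with $\bfP\subseteq\bfE_0$ and $\bfP\subseteq\bfR_0$. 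The inclusion $\bfR^*\subseteq\bfQ$ is Theorem \ref{inclusion theorem}$(i)$. Together these show the three classes coincide for $A$.

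The key step is that every proper principal minor of $A$ is positive. Take an index set $\alpha\subsetneq\{1,\dots,n\}$ and form $A_{\alpha\alpha}$. Index $i$ is coupled only to $i+1$, with the entry $a_{i(i+1)}$ in row $i$, and index $n$ is coupled to $1$ through $a_{n1}$. So the associated directed graph is the single cycle $1\to2\to\cdots\to n\to1$ with a loop at each vertex. Deleting at least one vertex breaks this cycle, so the digraph of $A_{\alpha\alpha}$ is acyclic apart from its loops. A matrix whose digraph has no cycles other than loops is permutation-similar to a triangular matrix. This can be made concrete: order the indices of $\alpha$ cyclically, starting just after a missing index. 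In that order each row has its off-diagonal entry only in the next column, so $A_{\alpha\alpha}$ becomes upper triangular under a simultaneous permutation. Therefore $\det A_{\alpha\alpha}=\prod_{i\in\alpha}a_{ii}>0$.

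It follows that $A\in\bfP$ exactly when the one remaining minor, $\det A$, is positive. I would also record the formula from (\ref{determinant of bdsw matrix}): $\det A=\prod a_{ii}-\prod_{i}|a_{i(i+1)}|\,|a_{n1}|$, since the sign factor $(-1)^{n+1}$ combines with the $n$ negative off-diagonal entries to give $-1$. This formula makes the condition explicit.

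The only point needing care is the permutation argument for proper principal submatrices, in particular checking that the cyclic relabeling gives a genuinely triangular form even when $\alpha$ consists of several disjoint "arcs" of the cycle. Each arc becomes an upper bidiagonal block, and the blocks do not interact, so I expect this to go through cleanly.
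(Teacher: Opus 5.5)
Your proposal is correct and follows the paper's own argument: $A$ is a $\bfZ$-matrix, so $\bfQ$, $\bfS$ and $\bfP$ coincide for it, and since every proper principal minor is positive, $A\in\bfP$ exactly when $\det A>0$. The one addition is that you justify the positivity of the proper principal minors (the cyclic relabeling that makes $A_{\alpha\alpha}$ upper bidiagonal), which the paper only asserts; for $\bfR^*$, the cleanest route is $\bfP\subseteq\bfP_0\cap\bfR_0\subseteq\bfR^*\subseteq\bfQ$, all from Theorem \ref{inclusion theorem}$(i)$ (with $\bfP\subseteq\bfR_0$ via $\bfP\subseteq\bfR\subseteq\bfR_0$).
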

%%%%%%%%%%%%%%%%%%%%%%%%%%
\section{Type-III bdsw matrices}
{\it A Type-III bdsw matrix is a bdsw matrix where in each row, the diagonal entry is negative, and the (relevant) off-diagonal entry is positive.} We observe that such a matrix is the negative of a Type-II bdsw matrix. For convenience, we write such a matrix in the following form:

\begin{equation}\label{type3 bdsw matrix}
A = \begin{bmatrix} 
    -a_{11} & a_{12} &  & 0 &  & 0 \\
     & -a_{22} & a_{23} &  &  &  \\
     &  & -a_{33} & a_{34} &   &  \\
    0 &    &   & \ddots & \ddots & 0\\
     &  &  &  & -a_{(n-1)(n-1)} & a_{(n-1)n} \\
    a_{n1} &  & 0 &  & 0 & -a_{nn}
\end{bmatrix},
\end{equation}
where every $a_{ij}>0$.

Our characterization result is the following:
\begin{theorem}\label{type3 theorem} 
Let $A\in \MnR$ be a Type-III bdsw matrix as in (\ref{type3 bdsw matrix}), $n\geq 2$. Then the following are equivalent:
\begin{itemize}
\item [$(i)$] $A\in \bfQ$.
\item [$(ii)$] $(-1)^{n+1}\det A>0$.
\item [$(iii)$] $A \in \bfR_0$ and $\deg A =\pm 1.$
\end{itemize}
\end{theorem}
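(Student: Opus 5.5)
The plan is to prove the cycle $(i)\Rightarrow(ii)\Rightarrow(iii)\Rightarrow(i)$. All three steps rest on one computation: the LCP for the specific vector $q=-\bfone$, together with the cyclic structure of the bdsw matrix. First I record what $(ii)$ means. By (\ref{determinant of bdsw matrix}) applied to (\ref{type3 bdsw matrix}),
$$(-1)^{n+1}\det A=a_{12}a_{23}\cdots a_{(n-1)n}a_{n1}-a_{11}a_{22}\cdots a_{nn}.$$
So $(ii)$ is equivalent to $r:=\dfrac{a_{11}\cdots a_{nn}}{a_{12}\cdots a_{(n-1)n}a_{n1}}<1$.

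For $(i)\Rightarrow(ii)$, let $x$ solve $\LCP(A,-\bfone)$. Writing indices cyclically (with $a_{n(n+1)}:=a_{n1}$ and $x_{n+1}:=x_1$), the $i$th row gives
$$a_{i(i+1)}x_{i+1}\geq 1+a_{ii}x_i\geq 1.$$
Hence $x_{i+1}>0$ for every $i$, so $x>0$, and complementarity forces $Ax-\bfone=0$. Then $x_{i+1}=(1+a_{ii}x_i)/a_{i(i+1)}$ for all $i$. Composing these $n$ affine maps around the cycle gives $x_1=c+r\,x_1$ with some constant $c>0$. Therefore $(1-r)x_1=c>0$, which forces $r<1$, i.e., $(ii)$.

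For $(ii)\Rightarrow(iii)$, I first show $A\in\bfR_0$. Suppose $0\leq x\perp Ax\geq 0$ and $x\neq 0$. Row $i$ gives $a_{i(i+1)}x_{i+1}\geq a_{ii}x_i$. Thus $x_i>0$ implies $x_{i+1}>0$, and going around the cycle gives $x>0$. Complementarity then yields $Ax=0$ with $x\neq 0$, so $\det A=0$, contradicting $(ii)$. Next I compute $\deg A$ using the formula from \cite{gowda-degree1993} with $q=-\bfone$. The argument above shows every solution of $\LCP(A,-\bfone)$ has $x>0$ and $Ax=\bfone$. Conversely, under $r<1$ the cyclic recursion has the unique solution $x_1=c/(1-r)>0$, with all other components positive. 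So there is exactly one solution, and it has full support. It is nondegenerate because $x+(Ax+q)=x>0$. Consequently
$$\deg A=\mathrm{sgn}\det A=(-1)^{n+1}=\pm1.$$

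Finally, $(iii)\Rightarrow(i)$ follows from the fact recalled in Section 2 that $\deg A\neq0$ implies $A\in\bfQ$. I expect the main obstacle to be the cyclic bookkeeping in the recursion: checking that the constant $c$ is strictly positive, and that uniqueness and nondegeneracy hold so the degree formula applies. Both follow because all $a_{ij}>0$, which makes each affine map $t\mapsto(1+a_{ii}t)/a_{i(i+1)}$ have positive slope and positive intercept.
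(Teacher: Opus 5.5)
Your proof is correct, and it takes a genuinely different route from the paper's.

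\textbf{The paper's route.} It first proves Lemma~\ref{lemma1}: if $\det A=0$ there is a positive $u$ with $A^Tu=0$, and if $\det A\neq 0$ then $A^{-1}=\frac{(-1)^{n+1}}{\det A}B$ with $B>0$. Everything is then read off from the sign of $A^{-1}$.
\begin{itemize}
\item For $(i)\Rightarrow(ii)$, the inclusion $\bfQ\subseteq\bfS$ rules out $\det A=0$. The case $(-1)^{n+1}\det A<0$ is excluded because $A^{-1}$ would then be a negative matrix, which cannot be in $\bfQ$ since $\bfQ$ is invariant under principal pivoting.
\item For $\bfR_0$ and the degree, a positive $A^{-1}$ lies in $\bfR$ and so has degree $1$. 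This is transferred back via $\deg A=\mathrm{sgn}\det A\cdot\deg A^{-1}$ \cite{cottle-pang-stone}.
\end{itemize}

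\textbf{Your route.} You never compute the inverse. The single test vector $q=-\bfone$ and the cyclic affine recursion show that every solution of $\LCP(A,-\bfone)$ has full support and satisfies $Ax=\bfone$. They also show that such a solution exists exactly when $r<1$, and is then unique and nondegenerate. This gives $(i)\Rightarrow(ii)$ directly. It also lets you evaluate the degree by the local-index formula, giving the exact value $\deg A=(-1)^{n+1}$ where the paper records only $\pm1$. Your $\bfR_0$ step is the propagation-around-the-cycle idea the paper uses for Type-I matrices.

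\textbf{What each buys.} Your argument is self-contained: it needs neither the pivoting invariance of $\bfQ$ nor the degree transformation theorem. It also shows that, for Type-III matrices, solvability of the single problem $\LCP(A,-\bfone)$ already characterizes the $\bfQ$-property. The paper's route yields the entrywise sign pattern of $A^{-1}$ as a by-product. That pattern gives an immediate alternative proof of $(ii)\Rightarrow(i)$, since a positive matrix is a $\bfQ$-matrix.
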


First, we prove the following lemma.
\begin{lemma}\label{lemma1}
Let $A$ be as in (\ref{type3 bdsw matrix}), $n\geq 2$. Then, the following statements hold:
\begin{itemize}
\item [$(i)$]  $\det A=0$ if and only if there there is a positive vector $u$  such that $A^Tu=0.$
    \item [$(ii)$] If $\det A\neq 0$, then
    $$A^{-1}=\frac{(-1)^{n+1}}{\det A} B,\,\mbox{where}\, B>0.$$
    \item [$(iii)$] If $\det A\neq 0$, then $A\in \bfR_0$.
    \item [$(iv)$] If $(-1)^{n+1}\det A>0$, then    $\deg A =\pm 1.$
\end{itemize}
\end{lemma}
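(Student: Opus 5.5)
The plan is to work directly with the explicit cyclic structure of $A$ in (\ref{type3 bdsw matrix}). From (\ref{determinant of bdsw matrix}),
$$\det A=(-1)^n a_{11}\cdots a_{nn}+(-1)^{n+1}a_{12}a_{23}\cdots a_{(n-1)n}a_{n1},$$
so that
$$(-1)^{n+1}\det A=\pi_o-\pi_d,$$
where $\pi_o:=a_{12}\cdots a_{(n-1)n}a_{n1}>0$ and $\pi_d:=a_{11}\cdots a_{nn}>0$. Every item will be reduced to the sign of $\pi_o-\pi_d$.

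\textbf{Item $(i)$.} Solve $A^Tu=0$ column by column. Column $j\ge 2$ gives $a_{(j-1)j}u_{j-1}=a_{jj}u_j$, so
$$u_j=\frac{a_{(j-1)j}}{a_{jj}}\,u_{j-1}.$$
Column $1$ gives $a_{11}u_1=a_{n1}u_n$. Choose $u_1=1$ and define $u_2,\dots,u_n$ by the recursion; they are all positive. The remaining equation then reads $\pi_d=\pi_o$, which is exactly $\det A=0$. Conversely, if some $u>0$ (indeed any $u\ne0$) satisfies $A^Tu=0$, then $A$ is singular.

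\textbf{Item $(ii)$.} Write $A=-D+N$, where $D=\mathrm{diag}(a_{ii})$ and $N\ge 0$ is the cyclic permutation pattern (superdiagonal plus corner). Then $A=-D(I-M)$ with $M:=D^{-1}N\ge0$. $M$ is a weighted cyclic permutation, so $M^n=(\pi_o/\pi_d)I$. Hence
$$(I-M)^{-1}=\frac{1}{1-\pi_o/\pi_d}\,(I+M+\cdots+M^{n-1}),$$
and $I+M+\cdots+M^{n-1}>0$ because the powers $M^0,\dots,M^{n-1}$ cover every position of the cycle. Therefore
$$A^{-1}=-(I-M)^{-1}D^{-1}=\frac{-\pi_d}{\pi_d-\pi_o}\,S,\qquad S>0.$$
Since $\pi_d-\pi_o=(-1)^n\det A$, this becomes $\frac{(-1)^{n+1}}{\det A}\,\pi_d S$, and we take $B:=\pi_d S>0$. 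One could equally verify this with the adjugate via the cyclic structure, but the geometric series is the cleanest route.

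\textbf{Item $(iii)$.} Suppose $0\le x\perp Ax\ge0$ with $x\ne0$. Let $y:=Ax\ge0$, so $x=A^{-1}y=cBy$ with $c\ne0$. If $y=0$ then $x=0$; so $y\ne0$, and since $B>0$ the vector $By$ is positive. Because $x\ge0$ and $x\ne0$, we must have $c>0$, so $x>0$. Complementarity then forces $y=0$, a contradiction. This gives $A\in\bfR_0$.

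\textbf{Item $(iv)$.} This is the step needing the most care. Suppose $(-1)^{n+1}\det A>0$. By $(ii)$, $A^{-1}>0$, so there is $x>0$ with $Ax=-\bfone<0$, and indeed $q:=\bfone$ has $A^{-1}q>0$. Use the degree formula from Section 2 with $q=\bfone$. For this $q$, the solutions of $\LCP(A,\bfone)$ are: $x=0$, which is nondegenerate with empty support and contributes $+1$; and $x=-A^{-1}\bfone$, which lies in $\Rn_{--}$ and so is not feasible. To get a single, clean solution I would instead take $q=-\bfone$. Then $x=0$ is infeasible, and $x=A^{-1}\bfone>0$ is a solution with $Ax+q=0$ and full support, nondegenerate since $x>0$. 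Any other solution $x$ has proper support $I$, and $A_{II}$ is a direct sum of Type-III path (bidiagonal) blocks, which are nonsingular with negative diagonal. I expect ruling these out to be the main obstacle: such blocks have inverses that are $\le0$ and triangular, so backward substitution along each path block should force the $q$-components to give a contradiction. In particular, at the last index $i$ of a path block, $(Ax+q)_i=-a_{ii}x_i-1<0$ whenever $x_{i+1}=0$. Once this is established, $\LCP(A,-\bfone)$ has the unique nondegenerate solution $A^{-1}\bfone$, and therefore $\deg A=\mathrm{sgn}\det A=\pm1$.
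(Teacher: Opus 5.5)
Your proof is correct. Items $(i)$ and $(iii)$ match the paper, and you also supply the trivial ``if'' direction of $(i)$. Items $(ii)$ and $(iv)$ take genuinely different routes.

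For $(ii)$, the paper solves $Ax=e_1$ by backward substitution and asserts that the other columns ``have similar representations.'' Your factorization $A=-D(I-M)$ with $M^n=(\pi_o/\pi_d)I$ instead gives the explicit matrix $B=\pi_d\,(I+M+\cdots+M^{n-1})D^{-1}$. This makes positivity of every entry transparent and is cleaner.

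For $(iv)$, the paper argues abstractly. Since $A^{-1}>0$ is an $\bfR$-matrix, $\deg A^{-1}=1$. The principal-pivot rule $\deg\widetilde{A}=\deg A\cdot\mathrm{sgn}\det E$, taken with $E=A$ and $\widetilde{A}=A^{-1}$, then gives $\deg A=\mathrm{sgn}\det A$. You instead use the local-index formula with $q=-\bfone$. This needs only $A\in\bfR_0$ from $(iii)$, and it produces an explicit witness: $\LCP(A,-\bfone)$ has the single nondegenerate, full-support solution $A^{-1}\bfone$. Both routes give $\deg A=\mathrm{sgn}\det A$. Yours avoids the two cited facts about $\bfR$-matrices and principal pivot transforms, at the cost of checking uniqueness by hand.

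The step you flag as ``the main obstacle'' is already settled by the inequality you wrote, so the direct-sum and backward-substitution discussion can be dropped. Index cyclically, with $x_{n+1}:=x_1$ and $a_{n(n+1)}:=a_{n1}$. If $x$ is feasible for $\LCP(A,-\bfone)$, row $i$ gives $a_{i(i+1)}x_{i+1}\geq 1+a_{ii}x_i>0$ for every $i$. Hence $x>0$, and complementarity forces $Ax=\bfone$. This also covers $x=0$ and every proper support at once.

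Please fix two slips in $(iv)$:
\begin{itemize}
\item From $A^{-1}>0$ you get $x=A^{-1}\bfone>0$ with $Ax=+\bfone$. The solution of $Ax=-\bfone$ is negative, not positive.
\item Your claim that $\LCP(A,\bfone)$ has only the solutions $0$ and $-A^{-1}\bfone$ is false. For $A=\begin{bmatrix}-1&2\\1&-1\end{bmatrix}$, which satisfies $(-1)^{n+1}\det A=1>0$, the vectors $(1,0)^T$ and $(0,1)^T$ also solve it. Delete that passage rather than leave it in as a discarded attempt.
\end{itemize}
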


\begin{proof}
From (\ref{determinant of bdsw matrix}), we see that 
$$\det A=(-1)^n\big [ a_{11}a_{22}\cdots a_{nn}\,-\, a_{12}a_{23}\cdots a_{(n-1)n}a_{n1}\big ].$$
$(i)$ We prove the `only if' part. When  $\det A=0$, we have $\det A^T=0$; so, there exists $ y\neq 0$ such that $A^Ty=0$. Writing $A^{T}y=0$ as a system of equations, we see that each coordinate of $y$ is a positive multiple of the first coordinate $y_1$. Taking $y_1=1$, we find a vector $u>0$ such that $A^Tu=0$.\\
$(ii)$ Suppose $\det A\neq 0$. Then the $i$th column of $A^{-1}$ is obtained by solving the equation $Ax=e_i$. For simplicity, we take $i=1$ and verify that the vector $x$ in $Ax=e_1$ is of the form 
$$x=\frac{(-1)^{n+1}}{\det A} b_1,$$
where $b_1>0$. Now, writing $Ax=e_1$ as a system of equations and using backward substitution, we see that every component of $x$ is a positive multiple of $x_1$, with $x_1=(-1)^{n+1} \frac{a_{22} \dotsc a_{nn}}{\det A}$. The stated expression for $x$ follows. The other columns of $A^{-1}$ will have similar representations. Consequently, $(ii)$ follows. \\
$(iii)$ Suppose $\det A\neq 0$. By $(i)$, either $A^{-1}>0$ or $A^{-1}<0$. In both cases, $A^{-1}\in \bfR_0$, hence $A\in \bfR_0$.\\
$(iv)$ Suppose $(-1)^{n+1}\det A>0$. By Items $(iii)$ and $(ii)$, $A\in \bfR_0$ and $A^{-1}$ is a positive multiple of $B$, where $B>0$. So, $A^{-1}\in\bfR$. Consequently, $\deg (A^{-1})=1$. Hence, by \cite{cottle-pang-stone}, Theorem 6.6.23,
$$\deg A = (\mbox{sgn}\det A) \deg A^{-1}=\pm 1.$$
\end{proof}

We now prove the Theorem \ref{type3 theorem}.
\begin{proof}
$(i) \Rightarrow (ii)$: Suppose $A\in \bfQ$. Then, $A\in \bfS$; hence there exists a vector $v>0$ such that $Av>0$. If $\det A=0$, then by the above lemma, there exists $u>0$ such that $A^Tu=0$. So, 
$$0=\langle A^Tu,v\rangle=\langle u,Av\rangle >0.$$
As this cannot happen, we must have either $(-1)^{n+1}\det A<0$ or $(-1)^{n+1}\det A>0$. In the first case, by Item $(ii)$ in Lemma \ref{lemma1},
$A^{-1}$ is a matrix with all entries negative. This cannot happen as $A\in \bfQ$. Therefore $(-1)^{n+1}\det A>0$.\\
$(ii) \Rightarrow (iii)$: Assume that $(-1)^{n+1}\det A>0$. Then by Items $(iii)$ and $(iv)$ in Lemma \ref{lemma1}, $A \in \bfR_0$ and $\deg A =\pm 1$.\\
$(iii) \Rightarrow (i)$: Suppose $A$ is a Type-III bdsw matrix of the form (\ref{type3 bdsw matrix}) with $(-1)^{n+1}\det A>0$. Then, by the above lemma, $A^{-1}$ is a positive matrix, hence a $\bfQ$-matrix. It follows that $A\in \bfQ$. 
\end{proof}

While dealing with Type-I and Type-II bdsw matrices, we showed that the $\bfQ$-property is equivalent to the $\bfR^*$-property. Such a property may not be true in the case of Type-III bdsw matrices. Here is an example. The matrix $$A=\begin{bmatrix}
    -1&~~2\\
    ~~1&-1
\end{bmatrix}$$ is in $\bfQ$ (by the above theorem), but not in  $\bfR(d)$ for any $d=(d_1,d_2)^T>0$, as $(d_1,0)^T$ and $(0,d_2)^T$ are two nonzero solutions of $\LCP(A,d)$. 

%%%%%%%%%%%%%%%%%%%%%%%%%%%%%
\section{Type-IV bdsw matrices}
{\it A Type-IV bdsw matrix is a bdsw matrix which contains no nonnegative/nonpositive row, has at least one row with a positive diagonal entry, and has at least one row with a negative diagonal entry.} So, in any row of such a matrix, each diagonal entry and its corresponding (relevant) off-diagonal entry have opposite nonzero signs.

A simple example of Type-IV bdsw matrix is 
$$A=\left [\begin{array}{rrr}
    -1 & ~~1 & ~~0 \\
         ~~0 & -1 & ~~1\\
-2 & ~~0 & ~~1
    \end{array}\right ].$$
    
\begin{theorem}\label{type4 theorem}
    Let $A\in \MnR$, $n\geq 2$, be a Type-IV bdsw matrix. Let $k$, $1\leq k<n$, be the number of negative diagonal entries in $A$. Then the following are equivalent:
    \begin{itemize}
    \item [$(i)$]  $A\in \bfQ$.
    \item [$(ii)$] $(-1)^{k+1}\det A >0$.
    \item [$(iii)$] $A\in \bfR_0$ and $\deg A=\pm 1.$
\end{itemize}
\end{theorem}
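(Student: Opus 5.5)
Write $\sigma_i=\mbox{sgn}\,a_{ii}$. In a Type-IV matrix every row has exactly one positive and one negative entry, so by (\ref{determinant of bdsw matrix}) and $\prod a_{ii}$ having sign $(-1)^k$, the sign of $a_{12}\cdots a_{n1}$ is $(-1)^{n-k}$. Hence
$$(-1)^{k}\det A=|a_{11}\cdots a_{nn}|-|a_{12}\cdots a_{(n-1)n}a_{n1}|,$$
and condition $(ii)$ reads $|\prod a_{i(i+1)}|\,|a_{n1}|>|\prod a_{ii}|$. I will prove $(i)\Rightarrow(ii)\Rightarrow(iii)\Rightarrow(i)$, the last step being immediate from the fact that $\deg A\neq 0$ implies $A\in\bfQ$.

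For $(ii)\Rightarrow(iii)$, I reduce to Type III by a principal pivot on the block of rows with positive diagonal. Using the cyclic permutation of Proposition \ref{special permutation invariance of bdsw property} (which preserves bdsw structure and the cyclic order), I may assume $a_{nn}>0$. Let $\beta$ be the set of indices with positive diagonal and $\alpha$ its complement ($|\alpha|=k$), and let $E=A_{\beta\beta}$. This block is a direct sum of upper bidiagonal blocks (maximal consecutive runs) with positive diagonal and negative superdiagonal. Hence $E$ is a nonsingular M-matrix, $E^{-1}\ge 0$, and $\det E>0$. The principal pivotal transform $\widetilde A$ has $E^{-1}\ge0$ in the $\beta\beta$ block and $A/E$ in the $\alpha\alpha$ block. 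I will check that $A/E$ is, after relabeling the cyclic order, a $k\times k$ Type-III bdsw matrix. Each negative-diagonal row $i$ connects through a run of positive rows to the next negative index, and eliminating that run yields a positive off-diagonal entry $a_{i(i+1)}\cdot(\text{product of ratios})$. The Schur formula $\det A=\det(A/E)\det E$ then gives $(-1)^{k+1}\det(A/E)>0$, which is exactly the condition of Theorem \ref{type3 theorem}. So $A/E\in\bfR_0$ with $(A/E)^{-1}>0$. I still need $\widetilde A\in\bfR_0$ (equivalently $A\in\bfR_0$) and its degree. Here I would prefer to argue directly: by Lemma \ref{lemma1}, $A^{-1}$ can be computed by the same cyclic back-substitution as in Type III, and I expect it to be a nonnegative matrix with positive diagonal. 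If so, $A^{-1}\in\bfR$ gives $\deg A^{-1}=1$, hence $A\in\bfR_0$ and $\deg A=\mbox{sgn}\det A=\pm1$ by \cite{cottle-pang-stone}, Theorem 6.6.23. The main step is therefore to show that solving $Ax=e_j$ gives $x=\frac{(-1)^{k+1}}{\det A}b$ with $b\ge0$ and $b_j>0$. I would do this by propagating around the cycle: going from row $i$ to $i+1$ multiplies by $-a_{ii}/a_{i(i+1)}>0$. The factor $(-1)^{k+1}$ arises from the $k$ sign changes when passing the inhomogeneous row.

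For $(i)\Rightarrow(ii)$, $A\in\bfS$ gives $v>0$ with $Av>0$. If $\det A=0$, the same sign propagation as in Lemma \ref{lemma1}$(i)$ produces $u>0$ with $A^Tu=0$, since the column relations $a_{ii}u_i+a_{(i-1)i}u_{i-1}=0$ have a negative ratio. This contradicts $\langle u,Av\rangle>0$. If instead $(-1)^{k+1}\det A<0$, the formula above gives $A^{-1}\le 0$, a nonzero nonpositive matrix, which is not in $\bfQ$; but $A\in\bfQ$ forces $A^{-1}\in\bfQ$, a contradiction. The main obstacle is verifying the explicit sign pattern of $A^{-1}$, in particular that it is entrywise of one sign. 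I would settle this first by direct cyclic back-substitution, exactly mirroring the proof of Lemma \ref{lemma1}$(ii)$.
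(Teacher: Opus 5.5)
There is a genuine gap. The sign pattern you import from Lemma~\ref{lemma1} is false for Type-IV matrices, and both of your nontrivial implications rest on it.

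Your back-substitution for $Ax=e_j$ is correct until the last step. With $r_i:=-a_{ii}/a_{i(i+1)}>0$ (indices cyclic, $a_{n(n+1)}:=a_{n1}$) you get $x_{i+1}=r_ix_i$ for $i\neq j$. Closing the cycle at row $j$ then gives
\[
a_{j(j+1)}(1-R)\,x_{j+1}=1,\qquad R:=r_1r_2\cdots r_n=\frac{|a_{11}a_{22}\cdots a_{nn}|}{|a_{12}a_{23}\cdots a_{(n-1)n}a_{n1}|}.
\]
So column $j$ of $A^{-1}$ has the sign of $a_{j(j+1)}(1-R)$. This varies with $j$, because a Type-IV matrix has off-diagonal entries of both signs. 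Under $(ii)$ (that is, $R<1$), the columns of negative-diagonal rows are positive and those of positive-diagonal rows are negative.

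For the paper's example $A=\begin{bmatrix}-1&1&0\\0&-1&1\\-2&0&1\end{bmatrix}$ ($k=2$, $\det A=-1$) one gets $A^{-1}=\begin{bmatrix}1&1&-1\\2&1&-1\\2&2&-1\end{bmatrix}$. This is neither nonnegative nor has positive diagonal, so your route to $A^{-1}\in\bfR$ is unsupported. When $(-1)^{k+1}\det A<0$ the inverse is again of mixed sign, not $\le 0$: $\begin{bmatrix}-2&1\\-1&1\end{bmatrix}^{-1}=\begin{bmatrix}-1&1\\-1&2\end{bmatrix}$.

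The null-vector argument fails as well. The relation $a_{ii}u_i=-a_{(i-1)i}u_{i-1}$ couples entries of two different rows, so $u_i/u_{i-1}$ has the sign of $a_{(i-1)(i-1)}a_{ii}$, which is negative wherever the diagonal sign changes. In fact $\det A=0$ is compatible with $\bfS$ here: $\begin{bmatrix}-1&1\\-1&1\end{bmatrix}$ is a singular Type-IV matrix mapping $(1,2)^T$ to $(1,1)^T>0$. So your $\bfS$-argument cannot exclude the singular case.

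The fix is the pivoting route you started and then dropped. It is a one-step version of the paper's proof; the paper pivots on a single positive diagonal entry and inducts on the number of positive diagonal entries.

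Order the indices as $(\alpha,\beta)$ and pivot on $E=A_{\beta\beta}$. This block is permutation-similar to a triangular matrix with positive diagonal and nonpositive off-diagonal part, so $E^{-1}\ge0$, $E^{-1}\in\bfP$ and $\det E>0$. Since $A_{\beta\alpha}\le0$, the blocks $-E^{-1}A_{\beta\alpha}$ and $E^{-1}$ forming the last block row of $\widetilde A$ are nonnegative.

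For $(i)\Rightarrow(ii)$: $A\in\bfQ\Rightarrow\widetilde A\in\bfQ\Rightarrow A/E\in\bfQ$ by Theorem~\ref{block matrix Thm}$(ii)(a)$. Theorem~\ref{type3 theorem} together with the Schur formula then yields $(ii)$, and this also excludes $\det A=0$.

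For $(ii)\Rightarrow(iii)$:
\begin{itemize}
\item Lemma~\ref{lemma1}$(iii),(iv)$ gives $A/E\in\bfR_0$ with $\deg(A/E)=\pm1$.
\item Proposition~\ref{degree result} gives $\widetilde A\in\bfR_0$ with $\deg\widetilde A=\pm1$. This is exactly the tool you were missing for ``$\widetilde A\in\bfR_0$ and its degree''.
\item Pivoting back on $E^{-1}$ (the transform is an involution) gives $A\in\bfR_0$ and $\deg A=\deg\widetilde A=\pm1$.
\end{itemize}

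For $k=1$, $A/E$ is a $1\times1$ matrix rather than a Type-III bdsw matrix. That case must be handled directly, but it is trivial.
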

    
\begin{proof} Let $m:=n-k$ denote the number of positive diagonal entries in $A$. We prove the result by induction on $m$.
We  first consider \\ 
{\it Case $m=1:$}\\ In this case, $k=n-1$; So $A$ has exactly one row where the diagonal entry is positive with (relevant) off-diagonal entry negative. By using a suitable permutation, see (\ref{Permutation matrix}), we can assume that this is the last row of $A$. (Note that this will not affect the determinant, the $\bfR_0$ and $\bfQ$-properties; the degree is also not affected, see Theorem 2.9.8, \cite{cottle-pang-stone}).) We write
$$A=\begin{bmatrix}
    B & c\\d & a_{nn}
\end{bmatrix},$$
where $d= \big (a_{n1},0,\ldots, 0\big )$ and $c=\big (0,0,\ldots, a_{(n-1)n}\big )^T$.  Observe that $a_{n1}<0, \,a_{nn}>0$ and, as $m=1$, $a_{(n-1)(n-1)}<0,\,a_{(n-1)n}>0$.
Now consider the principal pivotal transform (PPT) $\widetilde{A}$ of $A$ obtained by pivoting on the $1\times 1$ matrix $[a_{nn}]$.  We display $\widetilde{A}$ as follows:
\begin{equation}\label{PPT}
\widetilde{A}=\begin{bmatrix}
    \widetilde{B} & \widetilde{c}\\
    \widetilde{d} & \frac{1}{a_{nn}}
    \end{bmatrix}.
    \end{equation}
    Here, $\widetilde{d}= (-\frac{a_{n1}}{a_{nn}},0,\ldots, 0 )\geq 0$ and $\widetilde{c}=(0,0,\ldots, \frac{a_{(n-1)n}}{a_{nn}})^T$. Moreover, 
    $$\widetilde{B}:=B-\frac{1}{a_{nn}} c\,d^T$$ is the Schur complement of $[a_{nn}]$ in $A$. Since $a_{nn}>0$, $a_{n1}<0$ and $a_{(n-1)n}>0$, $\widetilde{B}$ is a Type-III bdsw matrix of order $n-1$.  We observe:
    \begin{itemize}
        \item $A\in \bfR_0\Leftrightarrow \widetilde{A}\in \bfR_0\Leftrightarrow \widetilde{B}\in \bfR_0$ (see Theorem \ref{inclusion theorem} for the last equivalence),
        \item $\det A=(\det \widetilde{B})\, a_{nn}$ (from the Schur determinantal formula),
        \item When $A\in \bfR_0$, $\deg \widetilde{A}=\deg \widetilde{B}$ (from Proposition \ref{degree result}).
    \end{itemize}
    We now prove the equivalence $(i)\Leftrightarrow (ii)\Leftrightarrow (iii)$. \\
    First, suppose $(i)$ holds, so that $A\in \bfQ$.
Then, $\widetilde{A}\in \bfQ$. Since the last row of $\widetilde{A}$ is nonnegative with $a_{nn}>0$, by Theorem \ref{block matrix Thm}(ii)(a), $\widetilde{B}\in \bfQ$. By Theorem \ref{type3 theorem}, 
we must have $(-1)^{(n-1)+1}\det \widetilde{B}>0$. By above, $\det A=(\det \widetilde{B})\, a_{nn};$
as $a_{nn}>0$, with $k=n-1$, we have $(-1)^{k+1}\det A>0$.  This is Item $(ii)$. \\
Now suppose $(ii)$ holds. Then, as above, $(-1)^{(n-1)+1}\det \widetilde{B}>0$. By applying Lemma \ref{lemma1}(iv) to the Type-III bdsw matrix $\widetilde{B}$, we have $\widetilde{B}\in \bfR_0$ and 
$\deg \widetilde{B}=\pm 1$. Then, by Proposition \ref{degree result} applied to $\widetilde{A}$ with $E=[\frac{1}{a_{nn}}],$
$$\deg \widetilde{A}= \deg \widetilde{B} =\pm 1.$$ By Theorem 6.6.23 in \cite{cottle-pang-stone}, we have
$\deg A= \deg \widetilde{A} =\pm 1.$ Thus we have $(iii)$. Finally, when $(iii)$ holds, $\deg A$ is nonzero; hence, $A\in \bfQ$. 
 This completes the proof when $m=1$.
\\
{\it Case $m>1:$}\\ Assume, to apply the principle of mathematical induction, that the equivalence of $(i),(ii)$ and $(iii)$ holds for $m-1$. 
By using Proposition \ref{special permutation invariance of bdsw property} (if necessary), we assume that the last row of $A$ has a positive diagonal entry. Then $a_{nn}>0$ and $a_{n1}<0$. Now, consider the principal pivot transform, $\widetilde{A}$, of $A$ obtained by pivoting on $a_{nn}$, see (\ref{PPT}).  Consider the block matrix $\widetilde{B}$ of order $n-1$ in the northwest corner of $\widetilde{A}$. A careful observation shows that $\widetilde{B}$ is a bdsw matrix with southwest corner entry $-\frac{a_{n1}a_{(n-1)n}}{a_{nn}}$.  Since $a_{n1}<0$ and $a_{nn}>0$, this corner entry is positive or negative depending on the sign of $a_{(n-1)n}$. Since $a_{(n-1)(n-1)}$ and $a_{(n-1)n}$ have opposite signs,  $\widetilde{B}$ is a Type-IV bdsw matrix. In $\widetilde{B}$, the number of negative diagonal entries is still $k$, while the number of positive diagonal entries is $m-1$. By our induction hypothesis, the equivalence $(i)\Leftrightarrow (ii)\Leftrightarrow (iii)$ of the theorem holds for $\widetilde{B}$. To complete the proof, we need to show that this equivalence holds for $A$ as well. We essentially repeat our argument(s) above.
\\
 Suppose $A\in \bfQ$.
Then, $\widetilde{A}\in \bfQ$. Since the last row of $\widetilde{A}$ is nonnegative with $a_{nn}>0$, by Theorem \ref{block matrix Thm}(ii)(a), $\widetilde{B}\in \bfQ$. As the equivalence holds for $\widetilde{B}$,
we must have $(-1)^{k+1}\det \widetilde{B}>0$. By using the Schur determinantal formula $\det A=(\det \widetilde{B})\, a_{nn},$ we have $(-1)^{k+1}\det A>0$. \\
Now suppose $(-1)^{k+1}\det A>0$. Then, as above, $(-1)^{(n-1)+1}\det \widetilde{B}>0$. By our induction hypothesis, $\widetilde{B}\in \bfR_0$ and 
$\deg \widetilde{B}=\pm 1$. Then, by Proposition \ref{degree result} applied to $\widetilde{A}$ with $E=[\frac{1}{a_{nn}}],$
$$\deg \widetilde{A}= \deg \widetilde{B} =\pm 1.$$ By Theorem 6.6.23 in \cite{cottle-pang-stone}, we have
$\deg A= \deg \widetilde{A} =\pm 1.$ Finally, when  $\deg A= \deg \widetilde{A} =\pm 1$, $\deg A$ is nonzero; hence, $A\in \bfQ$. Thus, we have the stated equivalence for $A$.
 \end{proof}
%%%%%%%%%%%%%
Here is an example that shows that $\bfQ$-property is not equivalent to the $\bfR$-property in the case of Type-IV bdsw matrices.

  Consider the following matrix:
$$A=\begin{bmatrix}
        -a & ~~b & ~~0 \\
         ~~0 & -c & ~~d\\
-e & ~~0 & ~~f \end{bmatrix}$$ with $a,b,c,d,e,f>0$ and $\det A<0$. By  Theorem \ref{type4 theorem},  $A \in \bfQ$. However, for any  $d=(d_1,d_2,d_3)^T>0$, $\LCP(A,d)$ has (at least) two solutions: zero and  $(0,d_2/c,0)^T$; hence,  $A\not\in \bfR(d)$. 
\\

\begin{remark}\label{deg of a Q-matrix}
    Let $A$ be a bdsw matrix. By considering results for each type, we see that 
    $$A\in \bfQ\Leftrightarrow A\in \bfR_0\,\,\mbox{and}\,\,\deg A=\pm 1.$$ 
\end{remark}

%%%%%%%%%%%%%%%%%%%%%
\section{A characterization of $\bfQ$-matrices of size $2\times 2$}

In what follows, we provide a complete characterization of the $\bfQ$-property for $2\times 2$ matrices in terms of the sign pattern and determinant. Note that a $2\times 2$ matrix is a bdsw matrix. Recall that in a matrix, $+$ denotes a positive entry, $-$ denotes a negative entry, $\oplus$ denotes a nonnegative entry, and $*$ denotes an arbitrary entry. By abuse of notation, we use the same symbol $A$  to denote a matrix and its sign pattern.

\begin{theorem} \label{2*2 Q characterization}
Let $A$ be a  $2\times 2$ real matrix. Then $A$ has the $\bfQ$-property  if and only if one of the following holds: 
\begin{itemize}
    \item [$(i)$]  $A$ has one of the following sign patterns:
    $$ 
    \begin{bmatrix}
         + & *\\
         \oplus & +
    \end{bmatrix},\,\,\begin{bmatrix}
         + & \oplus\\
         * & +
    \end{bmatrix},\,\,\begin{bmatrix}
         0 & +\\
         - & +
    \end{bmatrix},\,\,\begin{bmatrix}
         + & -\\
         + & 0
    \end{bmatrix};  
    $$
    \item [$(ii)$] $\det A >0$ and $A$ has one of the following sign patterns:
    $$\begin{bmatrix}
        + & - \\
         - & +
    \end{bmatrix},\,\,\begin{bmatrix}
         - & +\\
         - & +
    \end{bmatrix},\,\,\begin{bmatrix}
        + &- \\
        + & -
    \end{bmatrix};$$
\item [$(iii)$] $\det A <0$ and $A$ has the following sign pattern:
    $$\begin{bmatrix}
        - & + \\
         + & -
    \end{bmatrix}.$$
\end{itemize}

\end{theorem}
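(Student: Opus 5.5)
Every $2\times 2$ matrix $A=\begin{bmatrix} a_{11}&a_{12}\\ a_{21}&a_{22}\end{bmatrix}$ is a bdsw matrix with $n=2$: the superdiagonal entry is $a_{12}$, the southwest corner entry is $a_{21}$, and the rows are $(a_{11},a_{12})$ and $(a_{21},a_{22})$. The plan is to classify $A$ into the four types of bdsw matrices, after discarding matrices with a nonpositive row, since by Theorem \ref{inclusion theorem}$(iii)$ these are never in $\bfQ$. For each type we then read off the sign patterns from the theorems already proved, and check that together they give exactly the list in $(i)$–$(iii)$. Throughout we use invariance under the swap $J$ ($JAJ$ exchanges the two diagonal entries and the two off-diagonal entries), so it suffices to treat one of each symmetric pair.

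\textbf{Type-I.} Some row is nonnegative and nonzero. By applying $J$ if necessary, we may assume the second row is nonnegative, i.e.\ $a_{21}\ge 0$, $a_{22}\ge 0$, not both zero.
\begin{itemize}
\item If $a_{22}>0$, Theorem \ref{Bdsw, an1>=0, ann>0} shows that $A\in\bfQ$ if and only if the diagonal is positive. This gives the pattern $\begin{bmatrix}+&*\\ \oplus&+\end{bmatrix}$, and by $J$-symmetry also $\begin{bmatrix}+&\oplus\\ *&+\end{bmatrix}$.
\item If $a_{22}=0$ and $a_{21}>0$, Theorem \ref{Bdsw, an1>0, ann=0} gives $A\in\bfQ$ if and only if $a_{11}>0$ and $a_{12}<0$. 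This is the pattern $\begin{bmatrix}+&-\\+&0\end{bmatrix}$, whose $J$-image is $\begin{bmatrix}0&+\\-&+\end{bmatrix}$.
\end{itemize}
These are the four patterns in $(i)$. (For $n=2$ one could equally invoke Theorems \ref{bdsw with kth row nonnegative} and \ref{bdsw a_n1>0,ann<0}; they yield nothing new.)

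\textbf{Types II--IV.} Here no row is nonnegative or nonpositive, so in each row the two entries are nonzero with opposite signs.
\begin{itemize}
\item Type-II, pattern $\begin{bmatrix}+&-\\-&+\end{bmatrix}$: by Theorem \ref{type2 bdsw matrix theorem}, $A\in\bfQ$ if and only if $\det A>0$.
\item Type-III, pattern $\begin{bmatrix}-&+\\+&-\end{bmatrix}$: by Theorem \ref{type3 theorem} with $n=2$, $A\in\bfQ$ if and only if $(-1)^{3}\det A>0$, i.e.\ $\det A<0$. This is $(iii)$.
\item Type-IV: exactly one negative diagonal entry, so $k=1$. The patterns are $\begin{bmatrix}-&+\\-&+\end{bmatrix}$ and $\begin{bmatrix}+&-\\+&-\end{bmatrix}$, which are $J$-images of each other. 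By Theorem \ref{type4 theorem}, $A\in\bfQ$ if and only if $(-1)^{2}\det A>0$, i.e.\ $\det A>0$.
\end{itemize}
Together these give $(ii)$.

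\textbf{Main step.} The delicate part is the bookkeeping: we must verify that the case split is exhaustive and that no pattern is lost or wrongly added. Exhaustiveness holds because any matrix with no nonpositive row either has a nonnegative nonzero row (Type-I) or has all rows of mixed strict sign (Types II–IV). We must also check the overlaps. For example, $\begin{bmatrix}+&-\\-&+\end{bmatrix}$ does not belong to $(i)$, whereas $\begin{bmatrix}+&-\\+&+\end{bmatrix}$ does, via the first pattern. Finally, for the converse direction we must confirm that every pattern listed in $(i)$–$(iii)$ actually satisfies the hypotheses of the cited theorem, i.e.\ that each falls into the correct type. In particular, a matrix with a zero diagonal entry can only be in $\bfQ$ through the patterns $\begin{bmatrix}0&+\\-&+\end{bmatrix}$ and $\begin{bmatrix}+&-\\+&0\end{bmatrix}$. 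These two cases need a separate check: if $a_{11}=0$ and $a_{12}>0$ but $a_{21}\ge 0$, then $A$ is a Type-I matrix with a nonnegative first row, and its $J$-image falls under Theorem \ref{Bdsw, an1>0, ann=0}, which forces the relevant off-diagonal entry, here $a_{21}$, to be negative. Hence such an $A$ is not in $\bfQ$, consistent with the list.
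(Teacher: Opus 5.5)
Your proposal is correct and is essentially the paper's proof. Both arguments are a case analysis that reads the admissible sign patterns off Theorems \ref{Bdsw, an1>=0, ann>0}, \ref{Bdsw, an1>0, ann=0}, \ref{type2 bdsw matrix theorem}, \ref{type3 theorem} and \ref{type4 theorem}, using $JAJ$ to produce the symmetric patterns.

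The only difference is bookkeeping. You split by bdsw type and use $J$ to move the nonnegative row of a Type-I matrix to the bottom, which lets you skip Theorems \ref{bdsw with kth row nonnegative} and \ref{bdsw a_n1>0,ann<0}. The paper splits by the sign of $a_{22}$ and invokes those two theorems in Cases (1b) and (3a); for $n=2$, their proofs reduce to exactly your $J$-swap.
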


\begin{proof}
We write a $2 \times 2$ real matrix in the form:
\begin{equation}\label{2by2 matrix}
A=\begin{bmatrix}
        a & b\\
        c & d
    \end{bmatrix}.
    \end{equation}
    {\it As we are characterizing the $\bfQ$-property, we assume that each row of $A$ has a positive entry.} We look at various cases corresponding to the sign of $d$ and, in each case, consider the sign pattern. In this way, we obtain the matrices listed in $(i)$, $(ii)$, and $(iii)$.\\

\noindent{\bf Case 1:} $d>0$.
\begin{itemize}
    \item [$(1a)$] Suppose $A= \begin{bmatrix}
        * & * \\
        \oplus & +
    \end{bmatrix}.$ Then, from Theorem \ref{Bdsw, an1>=0, ann>0}, $A\in \bfQ$ if and only if $A= \begin{bmatrix}
        + &* \\
        \oplus & +
    \end{bmatrix}$. Thus, we have the first matrix in $(i)$. By considering simultaneous row/column changes (which is $JAJ$), we get the second matrix in $(i)$.
    \item [$(1b)$]  Suppose $A= \begin{bmatrix}
        a & b \\
         - & +
    \end{bmatrix}$. Note that both $a$ and $b$ cannot be nonpositive simultaneously; in particular, we cannot have $a<0,b=0$ and $a=0,b<0$. Now, if (both) $a$ and $b$ are nonnegative, then by Theorem \ref{bdsw with kth row nonnegative}, $A\in \bfQ$ if and only if either $A= \begin{bmatrix}
        + & \oplus \\
         - & +
    \end{bmatrix}$ (a special case of the second matrix in $(i)$)  or $A= \begin{bmatrix}
        0 & + \\
         - & +
    \end{bmatrix}$ (the third matrix listed in $(i)$). \\
    On the other hand, if $ab<0$, we have $A= \begin{bmatrix}
        + & - \\
         - & +
    \end{bmatrix}$ or 
    $A= \begin{bmatrix}
        - & + \\
         - & +
    \end{bmatrix}$. 
    Then we can apply either Theorem \ref{type2 bdsw matrix theorem} or Theorem \ref{type4 theorem} (with $k=1$) to see that 
     $A \in \bfQ$ if and only if $\det A>0$. In this way, we get the first two matrices in $(ii)$. The third matrix in $(ii)$ is obtained by (simultaneously) permuting the rows/columns of the second matrix in $(ii)$.
    \end{itemize}

     \noindent{\bf Case 2:} $d=0.$ \\
    
    Then $A=\begin{bmatrix}
        a & b\\
        + & 0 
        \end{bmatrix}.$ By Theorem \ref{Bdsw, an1>0, ann=0}, $A \in \bfQ$ if and only if $A=\begin{bmatrix}
        + & -\\
        + & 0 
        \end{bmatrix}$. Thus, we have the fourth matrix listed in $(i)$; it can also be obtained by simultaneously permuting the rows/columns of the third matrix in $(i)$.\\
        
    \noindent{\bf Case 3:} $d<0$.\\

    Then $A=\begin{bmatrix}
        a & b\\
        + & - 
        \end{bmatrix}.$ Noting that both $a$ and $b$ cannot be nonpositive simultaneously, we consider three cases: 
    \begin{itemize} 
\item [$(3a)$] $A= \begin{bmatrix}
        \oplus & \oplus \\
         + & -
    \end{bmatrix}$. This is a Type-I matrix; by Theorem \ref{bdsw a_n1>0,ann<0}, $A \notin \bfQ$.
    
    \item [$(3b)$] $A= \begin{bmatrix}
        + & - \\
         + & -
    \end{bmatrix}$. This is the third matrix of $(ii)$ considered earlier in $(1b)$. So, $A\in \bfQ$ if and only if $\det A>0$.
    
    \item [$(3c)$] $A= \begin{bmatrix}
        - & + \\
         + & -
    \end{bmatrix}$. By Theorem \ref{type3 theorem} (with $n=2$),  $A\in \bfQ$ if and only if $\det A<0$. This yields the matrix listed in $(iii)$.
\end{itemize}
\end{proof}
{\bf Note:}
As a $2\times 2$ matrix is a bdsw matrix, from Remark \ref{deg of a Q-matrix} we see that
$$A\in \bfQ\Leftrightarrow A\in \bfR_0\,\,\mbox{and}\,\,\deg A=\pm 1.$$

%%%%%%%%%%%%%%%%%%%%%%%
\section{Matrix-based linear transformations on a Euclidean Jordan algebra}
In this section, we extend the results of the previous sections to Euclidean Jordan algebras. First, we recall some basic definitions and introduce notation.
%%%%%%%%%%%%%%%%%%%%%%%%%%%%%%%%
\subsection{Euclidean Jordan algebras}
The material of this subsection can be found in \cite{faraut-koranyi, gowda-sznajder-tao2004}. A Euclidean Jordan algebra is a finite-dimensional real inner product space $(\V, \langle\cdot,\cdot\rangle)$ together with a bilinear product (called the Jordan product) $(x,y)\rightarrow x\circ y$ satisfying the following properties:
\begin{itemize}
\item [$\bullet$] $x\circ y=y\circ x$,
\item [$\bullet$] $x\circ (x^2\circ y)=x^2\circ (x\circ y)$, where $x^2=x\circ x$, and 
\item [$\bullet$] $\langle x\circ y,z\rangle=\langle x,y\circ z\rangle.$
\end{itemize}
In such an algebra, there is the `unit element' $e$ such that $x\circ e=x$ for all $x$.     
In $\V$, $$\V_+=\{x\circ x:\in \V\}$$
is called the {\it symmetric cone} of $\V$. It is a proper cone (i.e., a pointed closed convex cone with nonempty interior), self-dual, and homogeneous.

It is known \cite{faraut-koranyi} that any nonzero Euclidean Jordan algebra is a direct product/sum 
of simple Euclidean Jordan algebras, and every simple Euclidean Jordan algebra is isomorphic to one of five algebras, 
three of which are the algebras of $n\times n$ real/complex/quaternion Hermitian matrices. The other two are: the algebra of $3\times 3$ octonion Hermitian matrices and the Jordan spin algebra.
In the algebras $\Sn$ (of all $n\times n$ real symmetric matrices) and $\Hn$ (of all $n\times n$ complex Hermitian matrices), the Jordan product and the inner product are given, respectively, by 
$$X\circ Y:=\frac{XY+YX}{2}\quad\mbox{and}\quad \langle X,Y\rangle:=\tr(XY),$$
where the trace of a real/complex matrix is the sum of its diagonal entries. 

A nonzero element $c$ in $\V$ is an idempotent if $c^2=c$; it is a primitive idempotent if it is not the sum of two other idempotents. A Jordan frame $\{e_1,e_2,\ldots, e_n\}$ consists of 
primitive idempotents that are mutually orthogonal and with sum equal to the unit element. All Jordan frames in $\V$ have the same number of elements, called the rank of $\V$.
Let the rank of $\V$ be $n$. According to the {\it spectral decomposition 
theorem} \cite{faraut-koranyi}, any element $x\in \V$ has a decomposition
$$x=x_1e_1+x_2e_2+\cdots+x_ne_n,$$
where the real numbers $x_1,x_2,\ldots, x_n$ are (called) the eigenvalues of $x$ and \\
$\{e_1,e_2,\ldots, e_n\}$ is a Jordan frame in $\V$. (An element may have decompositions coming from different Jordan frames, but the eigenvalues remain the same.) 

We use the notation $x\geq 0$ ($x> 0$) when $x\in \V_+$ ($x\in \V_{++}:=$interior of $\V_+$) or, equivalently, all the eigenvalues of $x$ are nonnegative (respectively, positive). 
When $x\geq 0$ with spectral decomposition $x=x_1e_1+x_2e_2+\cdots+x_ne_n$, we define $\sqrt{x}:=\sqrt{x_1}e_1+\sqrt{x_2}e_2+\cdots+\sqrt{x_n}e_n$.
If $x>0$, then $x^{-1}:=(x_1)^{-1}e_1+\cdots+(x_n)^{-1}e_n$.
If $x_1,x_2,\ldots, x_n$ are the eigenvalues of  $x\in \V$, we define the {\it trace }  of $x$ by
$\tr(x):=x_1+x_2+\cdots+x_n.$
It is known that $(x,y)\mapsto \tr(x\circ y)$ defines another inner product on $\V$ that is compatible with the Jordan product. 
{\it We assume that $\V$ carries this inner product,  
that is,
$\langle x,y\rangle=\tr(x\circ y).$}
In this inner product, the norm of any primitive element is one, and so any Jordan frame
in $\V$ is an orthonormal set. Additionally,
$\tr(x)=\langle x,e\rangle \quad\mbox{for all}\,\,x\in \V.$

 It is well-known that  
in $\V$,
\begin{equation}\label{cone nonnegativity}
x,y\geq 0\Rightarrow \langle x,y\rangle \geq 0\quad\mbox{and}\quad
0\neq x\geq 0, \,y>0\Rightarrow \langle x,y\rangle >0.
\end{equation}
%%%%%%%%%%%%
%%%%%%%%%%%%%%%%%%%
\subsection{Symmetric cone LCP}

\begin{definition} Let $L:\V\rightarrow \V$ be a linear transformation and $p\in \V$. Then, the symmetric cone linear complementarity problem, $\LCP(L,\V_+,p)$, is to find $x\in \V$ such that
$$x\in \V_+,\,y:=L(x)+p\in \V_+,\,\mbox{and}\,\,\langle x,y\rangle=0.$$
We write $\SOL(L,\V_+,p)$ for the set of all solutions to $\LCP(L,\V_+,p)$. 
\end{definition}

Notation: In our earlier sections, we consider the standard matrix classes $\bfR_0$, $\bfR$, $\bfQ$, etc. As we are going to define LCP classes in the setting of Euclidean Jordan algebras, we modify the notation: For the LCP classes over $\V$, relative to the symmetric cone $\V_+$, we use the notation(s), $\bfR_0(\V)$, $\bfR(\V)$, $\bfQ(\V)$, etc. In particular, $\bfR_0(\Rn)$ refers to $\bfR_0$, etc.

\begin{definition}
    A linear transformation $\phi:\V\rightarrow \V$ such that $\phi(\V_+)=\V_+$ is called a {\it cone automorphism of $\V$}
    (or an automorphism of $\V_+$). We write $\Aut(\V_+)$ for the set of all such automorphisms.
    \end{definition} 
    Note: Since $\V_+$ has nonempty interior, a cone automorphism is necessarily invertible. A special case of a cone automorphism is an {\it algebra automorphism}: it is an invertible linear transformation $\phi$ on $\V$ that satisfies the condition $\phi(x\circ y)=\phi(x)\circ \phi(y)$ for all $x,y\in \V$. For example, in the algebra $\Rn$, algebra automorphisms are permutation matrices and cone automorphisms are products of permutation matrices and diagonal matrices with positive diagonal. \\
    
    Let $\phi^T$ denote the transpose/adjoint of $\phi$ relative to the inner product in $\V$. Given a linear transformation $L:\V\rightarrow \V$ and $\phi\in \Aut(\V_+)$, we let 
    $$L_\phi:=\phi^TL\phi.$$

    We recall the following result:

    \begin{theorem} \label{L to Lphi}\cite[Theorem 5.1$(a)$]{gowda-sznajder2006} Let $L$ be linear on $\V$ and $\phi\in \Aut(\V_+)$. Then
    \begin{itemize}
    \item $L\in \bfR_0(\V)\Leftrightarrow L_\phi\in \bfR_0(\V)$.
    \item $L\in \bfR(\V)\Leftrightarrow L_\phi\in \bfR(\V)$.
    \item $L\in \bfQ(\V)\Leftrightarrow L_\phi\in \bfQ(\V)$. 
    \end{itemize}
    \end{theorem}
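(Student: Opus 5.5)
The plan is to reduce all three items to one identity. With $\psi:=\phi^{-1}$, which is again in $\Aut(\V_+)$, I would prove that $\psi$ maps the solution set of one problem onto the solution set of the other:
$$\SOL(L_\phi,\V_+,\phi^T p)=\psi\bigl(\SOL(L,\V_+,p)\bigr)\qquad\text{for every } p\in\V.$$

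First I need the standard fact that $\phi^T\in\Aut(\V_+)$ whenever $\phi\in\Aut(\V_+)$. This follows from self-duality of $\V_+$. Indeed, $y\geq 0$ iff $\langle y,z\rangle\geq 0$ for all $z\geq 0$. Then $\langle \phi^T y,z\rangle=\langle y,\phi z\rangle$, and $\phi$ maps $\V_+$ onto $\V_+$. So $\phi^T y\geq 0$ iff $y\geq 0$, which gives $\phi^T(\V_+)=\V_+$.

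Next I verify the identity directly. Take $u$ and set $x=\phi u$. Then $u\geq 0\Leftrightarrow x\geq 0$. Also
$$L_\phi(u)+\phi^T p=\phi^T\bigl(L(x)+p\bigr),$$
and this is $\geq 0$ iff $L(x)+p\geq 0$, by the previous step. Finally
$$\langle u,\phi^T(L(x)+p)\rangle=\langle \phi u,L(x)+p\rangle=\langle x,L(x)+p\rangle,$$
so the complementarity conditions match as well. Hence $u$ solves $\LCP(L_\phi,\V_+,\phi^Tp)$ iff $\phi u$ solves $\LCP(L,\V_+,p)$, which is the identity.

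The three items now follow from this correspondence. For $\bfR_0$, take $p=0$: the zero solution corresponds to the zero solution because $\psi$ is injective. For $\bfQ$, note that $p\mapsto \phi^Tp$ is a bijection of $\V$, so solvability for all $p$ is preserved. For $\bfR$, suppose $L\in\bfR(d)$ with $d>0$. Then $\phi^T d>0$, since an invertible linear map taking $\V_+$ onto $\V_+$ maps the interior onto the interior. The correspondence then gives $L_\phi\in\bfR(\phi^Td)$. In each case the converse direction follows by symmetry, since $L=(L_\phi)_{\psi}$ with $\psi=\phi^{-1}$ and $(\phi^{-1})^T=(\phi^T)^{-1}$.

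The main obstacle is only the lemma that $\phi^T$ preserves $\V_+$ and its interior, and self-duality handles that cleanly.
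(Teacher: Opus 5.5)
Your proof is correct and complete. The paper gives no proof of this statement; it quotes it from \cite[Theorem 5.1$(a)$]{gowda-sznajder2006}. Your identity $\SOL(L_\phi,\V_+,\phi^Tp)=\phi^{-1}\bigl(\SOL(L,\V_+,p)\bigr)$ therefore supplies a self-contained argument for all three items at once. You obtain it from self-duality of $\V_+$, which gives $\phi^T\in\Aut(\V_+)$, together with the substitution $x=\phi u$.

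Two points you use implicitly are worth stating. First, invertibility of $\phi$, which the paper notes, is what gives you three things: $u\geq 0\Leftrightarrow \phi u\geq 0$, the upgrade from ``$\phi^Ty\geq 0\Leftrightarrow y\geq 0$'' to $\phi^T(\V_+)=\V_+$, and the fact that $p\mapsto\phi^Tp$ is a bijection in the $\bfQ$ case. Second, in the $\bfR$ case you transport $\bfR(d)$ to $\bfR(\phi^Td)$. This suffices precisely because $\bfR(\V)$ only asks for \emph{some} $d>0$, as in this paper's definition. If the class were tied to a fixed vector such as $e$, the same computation would only give $L_\phi\in\bfR(\phi^Te)$.
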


 %%%%%%%%%%%%%%%%%%%%%
\subsection{Matrix-based transformations}
We fix a Jordan frame ${\cal E}=\{e_1,e_2,\ldots,e_n\}$ in $\V$. Given $A=[a_{ij}]\in \MnR$ and $B=[b_{ij}]\in \Sn$, we define a linear transformation $R_{(A,B)}$ as follows: For any $x\in \V$, consider the Peirce decomposition relative to ${\cal E}$ \cite{faraut-koranyi}:
    $$x=\sum_{i=1}^{n}x_ie_i+\sum_{i<j}x_{ij}.$$
  Then,  
  $$R_{(A,B)}(x):=\sum_{i=1}^{n}y_ie_i+\sum_{i<j}b_{ij}x_{ij},$$
    where $y_i:=\sum\limits_{j=1}^{n}a_{ij}x_j$ for $i=1,2,\ldots, n$.

Here are two special cases:
\begin{itemize}
    \item Suppose $B={\bfone} {\bfone} ^T$ (i.e., $b_{ij}=1$ for all $i,j$). Then $R_{(A,B)}$ reduces to $R_A$ studied in \cite{tao-gowda2005}.
    \item Suppose $A$ is a diagonal matrix and the diagonal of $B$ is zero. Let $C=A+B$. Then, $R_{(A,B)}$ reduces to $D_C$ studied in \cite{gowda-tao-sznajder2012}.
    \end{itemize}
    
 In what follows, we let $B=0$ and define $\widehat{A}:=R_{(A,0)}$.

Recall that a rank-one transformation, corresponding to $a,b\in \V$, is given by:
$$(a\otimes b)(x):=\langle b,x\rangle\,a\quad (x\in \V).$$

\begin{definition}
In the given Euclidean Jordan algebra $\V$, we fix a Jordan frame
$\E:=\{e_1,e_2,\ldots, e_n\}$. For any $A=[a_{ij}]\in \MnR$, we define the transformation $\widehat{A}:\V\rightarrow \V$ as follows:
$$
\widehat{A}:= \sum_{i,j=1}^{n}\,a_{ij} e_i\otimes e_j.$$
Explicitly, $$\widehat{A}x=\sum_{i=1}^{n}\Big (\sum_{j=1}^{n}a_{ij}\langle x,e_j\rangle\Big) e_i\quad (x\in \V).$$
\end{definition}

\noindent{\bf Some notation:} As in the above definition, we fix a Jordan frame $\E:=\{e_1,e_2,\ldots, e_n\}$.
For any $r=(r_1,r_2,\ldots, r_n)^T\in \Rn$ and $x\in \V$, we let $$\widehat{r}:=\sum_{i=1}^{n}r_ie_i\quad\mbox{and}\quad
[x]:=\Big ( \langle x,e_1\rangle, \langle x,e_2\rangle,\ldots,\langle x,e_n\rangle\Big )^T.$$
We note:
\begin{equation}\label{diagonal zero}
x\in \V_+, \,[x]=0 \Rightarrow x=0.
\end{equation}
(This is an easy consequence of (\ref{cone nonnegativity}) and the property that $e_1+e_2+\cdots+e_n=e>0$.)

The following are easy to verify:
\begin{equation} \label{widehat remark}
[\widehat{r}]=r,\quad\langle r,s\rangle=\langle \widehat{r},\widehat{s}\rangle,\quad  \widehat{Ar}=\widehat{A}\,\widehat{r},\quad\langle \widehat{A}x,x\rangle=\langle A[x],[x]\rangle.
\end{equation}

   \begin{theorem} \label{embedding theorem}(Embedding theorem) Let $A\in \MnR$ and $q\in \Rn$. Then the following hold:
    \begin{itemize}
    \item [$(i)$] 
         $r\in \SOL(A,q)\Rightarrow \,\widehat{r}\in \SOL(\widehat{A},\V_+,\widehat{q})$.
         \item [$(ii)$]  $x\in \SOL(\widehat{A},\V_+,\widehat{q})\Rightarrow [x]\in \SOL(A,q)$. 
         \end{itemize}
         
    \end{theorem}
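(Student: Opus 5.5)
Both items follow by unwinding the definitions with the identities in (\ref{widehat remark}), together with the self-duality facts (\ref{cone nonnegativity}) and (\ref{diagonal zero}). The first item is a direct check. The second item comes down to one key observation: for any $x\in\V$, the vector $[x]$ is nonnegative exactly when it is the diagonal of something in $\V_+$, and $\widehat{A}x$ depends on $x$ only through $[x]$.

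\emph{Item $(i)$.} Let $r\in\SOL(A,q)$, so that $r\ge 0$, $s:=Ar+q\ge 0$ and $\langle r,s\rangle=0$.
\begin{itemize}
\item[$\bullet$] Since $\widehat{r}=\sum r_ie_i$ with $r_i\ge 0$, its eigenvalues (relative to the Jordan frame $\E$) are the $r_i$, so $\widehat{r}\in\V_+$.
\item[$\bullet$] By linearity of $r\mapsto\widehat r$ and the identity $\widehat{Ar}=\widehat{A}\,\widehat{r}$, we get $\widehat{A}\,\widehat{r}+\widehat{q}=\widehat{Ar+q}=\widehat{s}$, which lies in $\V_+$ because $s\ge 0$.
\item[$\bullet$] Finally, $\langle \widehat r,\widehat s\rangle=\langle r,s\rangle=0$.
\end{itemize}
Hence $\widehat r\in\SOL(\widehat A,\V_+,\widehat q)$.

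\emph{Item $(ii)$.} Let $x\in\SOL(\widehat A,\V_+,\widehat q)$ and set $y:=\widehat Ax+\widehat q\in\V_+$.
\begin{itemize}
\item[$\bullet$] \emph{Nonnegativity of $[x]$.} Each $e_i\in\V_+$, so $\langle x,e_i\rangle\ge 0$ by (\ref{cone nonnegativity}). Thus $[x]\ge 0$.
\item[$\bullet$] \emph{Form of $y$.} From the explicit formula for $\widehat A$, $\widehat Ax=\widehat{A[x]}$, and therefore $y=\widehat{A[x]+q}$.
\item[$\bullet$] \emph{Feasibility.} Using $[\widehat r]=r$, we get $[y]=A[x]+q$. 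Since $y\in\V_+$, the same argument as above gives $A[x]+q\ge 0$.
\item[$\bullet$] \emph{Complementarity.} This is the step needing care, since $\langle x,y\rangle$ is an inner product in $\V$ while we need $\langle [x],A[x]+q\rangle$ in $\Rn$. Because $y=\sum_i y_ie_i$ with $y_i=(A[x]+q)_i$,
$$\langle x,y\rangle=\sum_i y_i\langle x,e_i\rangle=\langle [x],A[x]+q\rangle .$$
So $0=\langle x,y\rangle$ gives $\langle [x],A[x]+q\rangle=0$.
\end{itemize}
Hence $[x]\in\SOL(A,q)$.

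There is no real obstacle here. The only points to watch are the two facts used in item $(ii)$: $\widehat A x$ depends on $x$ only through $[x]$ (read off directly from the definition of $\widehat A$), and $\langle x,\widehat s\rangle=\langle [x],s\rangle$ for every $s\in\Rn$, which follows from the definition of $[\cdot]$. Property (\ref{diagonal zero}) is not needed for this theorem; it becomes relevant later, for transferring uniqueness and $\bfR_0$-type properties.
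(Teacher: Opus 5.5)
Your proposal is correct and follows the same route as the paper. Item $(i)$ is the direct translation of $r\ge 0$, $Ar+q\ge 0$, $\langle r,Ar+q\rangle=0$ via the identities in (\ref{widehat remark}), and item $(ii)$ rests on $[x]\ge 0$, $[y]=A[x]+q$ and $\langle [x],A[x]+q\rangle=\langle x,y\rangle$. You spell out steps the paper leaves implicit ($\widehat{A}x=\widehat{A[x]}$, and $[y]\ge 0$ because $y\in\V_+$), and you are right that (\ref{diagonal zero}) is not needed here, so drop it from your opening sentence.
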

    \begin{proof} $(i)$ Let $r\in \SOL(A,q)$. Then the conditions $r\geq 0$, $Ar+q\geq 0$, and $\langle r, Ar+q\rangle =0$  translate to $\widehat{r}\geq 0$, $\widehat{A}\,\widehat{r}+\widehat{q}\geq 0$, and 
    $\langle \widehat{r},\widehat{A}\,\widehat{r}+\widehat{q}\rangle=0.$ We see that $\widehat{r}\in \SOL(\widehat{A},\V_+,\widehat{q}).$\\
    $(ii)$ 
    Now suppose $x$ solves $\LCP(\widehat{A},\V_+,\widehat{q})$. Then,
with $y=
\widehat{A}x+\widehat{q}$, we have
$$[x]\geq 0,\,[y]=A[x]+q,\,\mbox{and}\,\,\langle [x],A[x]+q\rangle=\langle x,y\rangle=0.$$
 Therefore, $[x]$ solves $\LCP(A,q)$.   
    \end{proof}

\begin{theorem} \label{A and Ahat are related} Let $A\in \MnR$. Corresponding to a  Jordan frame $\{e_1,e_2,\ldots,e_n\}$, we consider $\widehat{A}$. Then the following statements hold:
\begin{itemize}
\item [$(a)$] $A$ is copositive (strictly copositive) on $\Rn_+$ $\Leftrightarrow$ $\widehat{A}$ is copositive (strictly copositive) on $\V_+$.
\item [$(b)$] 
$A\in \bfR_0(\Rn)\Leftrightarrow \widehat{A}\in \bfR_0(\V).$
 \item [$(c)$]  $A\in \bfR(\Rn)\Rightarrow \widehat{A}\in \bfR(\V)$. 
\item [$(d)$] $\widehat{A}\in \bfQ(\V) \Rightarrow A\in \bfQ(\Rn)$.
\end{itemize}
\end{theorem}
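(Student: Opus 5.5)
The plan rests on two tools: the identities in (\ref{widehat remark}) and the Embedding Theorem \ref{embedding theorem}. The key observation is that $\widehat{A}x$ depends on $x$ only through $[x]$, and that $[x]\geq 0$ whenever $x\in\V_+$. The latter holds because each $e_i\geq 0$, so by (\ref{cone nonnegativity}) we get $\langle x,e_i\rangle\ge 0$.

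For $(a)$, the forward direction uses $\langle \widehat{A}x,x\rangle=\langle A[x],[x]\rangle$. If $x\in\V_+$, then $[x]\in\Rn_+$, so copositivity of $A$ gives $\langle\widehat{A}x,x\rangle\ge 0$. For strict copositivity we also need $[x]\ne 0$ when $0\ne x\ge0$, and this is exactly (\ref{diagonal zero}). For the reverse direction, given $r\geq 0$, we note $\widehat{r}\in\V_+$ and $\langle A r,r\rangle=\langle\widehat{A}\widehat{r},\widehat{r}\rangle$. Here we use $[\widehat r]=r$, and also $\widehat r\ne0$ when $r\ne0$.

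For $(b)$, take the forward direction first. If $x\in\SOL(\widehat{A},\V_+,0)$, then by Theorem \ref{embedding theorem}$(ii)$ with $q=0$ we get $[x]\in\SOL(A,0)$, so $[x]=0$, and then $x=0$ by (\ref{diagonal zero}). Conversely, if $r\in\SOL(A,0)$, then $\widehat r\in\SOL(\widehat A,\V_+,0)$ by part $(i)$. Hence $\widehat r=0$, and so $r=[\widehat r]=0$.

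For $(c)$, suppose $A\in\bfR(d)$ with $d>0$. Then $\widehat d=\sum d_ie_i>0$, since its eigenvalues are the $d_i$. We already have $\widehat{A}\in\bfR_0(\V)$ from $(b)$. If $x\in\SOL(\widehat A,\V_+,\widehat d)$, then $[x]\in\SOL(A,d)$ by the Embedding Theorem, so $[x]=0$ and hence $x=0$ by (\ref{diagonal zero}). Thus $\widehat A\in\bfR(\V)$ with respect to $\widehat d$. One point needs care: the definition of $\bfR(\V)$ is presumably the analogue with $d$ in the interior of $\V_+$, and $\widehat d$ qualifies.

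For $(d)$, given $q\in\Rn$, solvability of $\LCP(\widehat A,\V_+,\widehat q)$ yields a solution $x$, and then $[x]\in\SOL(A,q)$ by Theorem \ref{embedding theorem}$(ii)$. I expect no real obstacle in any part. The only delicate points are the positivity of $[x]$ for $x\in\V_+$ and the implication (\ref{diagonal zero}), both of which are already recorded in the paper.
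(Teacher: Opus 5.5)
Your proposal is correct and follows the paper's proof essentially step for step. Part $(a)$ comes from the identity $\langle \widehat{A}x,x\rangle=\langle A[x],[x]\rangle$ together with (\ref{diagonal zero}), and parts $(b)$--$(d)$ come from the two halves of the Embedding Theorem, with $(c)$ also using $\widehat{d}\in\V_{++}$; you merely make explicit some small facts the paper leaves implicit, namely $[x]\geq 0$ for $x\in\V_+$ and $\widehat{r}\neq 0$ for $r\neq 0$.
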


\begin{proof}
$(a)$: Recall that copositivity of $A$ on $\Rn_+$ means 
$\langle Ax,x\rangle \geq 0$ for all $x\in \Rn_+$ and strict copositivity means $\langle Ax,x\rangle > 0$ for all 
$0\neq x\in \Rn_+$; similar definitions hold for $\widehat{A}$ on $\V_+$. Now the equivalence in $(a)$ follows from (\ref{widehat remark}).\\
$(b)$ Suppose $A\in \bfR_0(\Rn)$ and let $x\in \SOL(\widehat{A},\V_+,0)$. By Theorem \ref{embedding theorem}, $[x]\in \SOL(A,0)$, hence $[x]=0$. As $x\geq 0$, we must have $x=0.$ Thus, $\widehat{A}\in \bfR_0(\V).$\\
To see the reverse implication, suppose $\widehat{A}\in \bfR_0(\V).$ If  $r\in \SOL(A,0)$, then $\widehat{r}\in \SOL(\widehat{A},\V_+,0)=\{0\}$. It follows that $r=0.$\\
$(c)$ Suppose $A\in \bfR(\Rn)$. Then $A\in \bfR_0(\Rn)$ and $\SOL(A,d)=\{0\}$ for some $d >0$ in $\Rn$. By Item $(b)$, $\widehat{A}\in \bfR_0(\V)$. Consider $\widehat{d}$ and 
     let $x\in \SOL(\widehat{A},\V_+,\widehat{d})$. By Proposition \ref{embedding theorem}, $[x]\in \SOL(A,d)=\{0\}$, hence $x=0$. As $\widehat{d} \in \V_{++}$, $\widehat{A}\in \bfR(\V)$.\\
     $(d)$ This follows from Item $(ii)$ in Proposition \ref{embedding theorem}.
\end{proof}

\begin{remark}
Suppose $A$ is a bdsw $\bfQ$-matrix.  If $A$ is of Type-$I$ or $II$, then  $A\in \bfR^*$; consequently, from the above theorem we have $\widehat{A}\in \bfQ(\V)$. What happens when $A$ is of Type-$III$ or $IV$? Then $A\in \bfR_0$ and $\deg A=\pm 1.$ From the above result, we have $\widehat{A}\in \bfR_0(\V)$. In this case, we can define the degree of $\widehat{A}$ relative to $\V_+$ as $\deg \widehat{A}:= \deg (F, \Omega, 0)$, where
$$F(x):=x-(x-\widehat{A}x)^+\quad (x\in \V),$$
$\Omega$ is a bounded open set containing $0$ in $\V$. $($Here, for any $z\in \V$, $z^+$ is the projection of $z$ onto $\V_+$. Note that $F(x)$ is the analog of the function $f(x)=\min\{x, Ax\}$ on $\Rn$ considered earlier while defining the degree of $A$.$)$ 
We do not know how $\deg A$ and $\deg \widehat{A}$ are related. If it happens that $\deg A=\deg \widehat{A}$, we can say: $\widehat{A}\in \bfQ(\V)$ when $A$ is if Type-$III$ or $IV$.

\end{remark}
%%%%%%%%%%%%%%%%%%%
\subsection{Transformations corresponding to nonnegative and rank-one matrices}
 Theorem \ref{A and Ahat are related} shows how the classes $\bfR_0$ and $\bfR$ are related when we go from $A$ to $\widehat{A}$. At this stage, we do not know if the reverse implication in Item $(d)$, namely, 
 \begin{equation}\label{reverse implication}
 A\in \bfQ(\Rn)\Rightarrow \widehat{A}\in \bfQ(\V)
 \end{equation}
 holds generally. In our next two results, we prove (\ref{reverse implication}) when $A$ is either a nonnegative matrix or a rank-one matrix. First, we recall the following consequence of the well-known cone complementarity result due to Karamardian \cite{karamardian}.

\begin{theorem}\label{karamardian}
{\it Let $\V$ be an Euclidean Jordan algebra and $L:\V\rightarrow \V$ be linear. Suppose there is a $d>0$ such that 
zero is the only solution of the problems $\LCP(L,\V_+,0)$ and $\LCP(L,\V_+,d)$. Then,
$L\in \bfQ(\V)$, that is, for all $q\in \V$, $\LCP(L,\V_+,q)$ has a solution. In particular, this conclusion holds if $L$ is strictly copositive on $\V_+$, that is, 
$\langle L(x),x\rangle >0$ for all $0\neq x\in \V_+$.}
\end{theorem}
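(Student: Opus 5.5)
The plan is to reduce the first assertion to Karamardian's cone complementarity theorem, the same result the paper uses elsewhere for cone complementarity. Fix $d>0$ in $\V$ and let $L$ be as in the statement. I will show that zero is the only solution of $\LCP(L,\V_+,0)$, and likewise of $\LCP(L,\V_+,d)$. Karamardian's original theorem works in a general closed convex cone. So I take $K=\V_+$, which is a proper cone with nonempty interior, and I pass to the dual via self-duality $\V_+^*=\V_+$. This gives $\LCP(L,\V_+,q)$ solvable for every $q\in\V$.

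\emph{Case $q=0$.} Let $x\in\SOL(L,\V_+,0)$. Then $x\ge0$, $y:=L(x)\ge0$ and $\langle x,y\rangle=0$. So $x\in\V_+$ lies in the complementarity set, and the hypothesis gives $x=0$. \emph{Case $q=d$.} Let $x\in\SOL(L,\V_+,d)$, so that $L(x)+d\ge0$ and $\langle x,L(x)+d\rangle=0$. The first key step is the positive-homogeneity trick from Karamardian's argument. I will consider the homotopy $t\mapsto \LCP(L,\V_+,td)$, or equivalently argue by degree. The map $F_t(x)=x-(x-L(x)-td)^+$ has zeros only in a bounded set, because $\LCP(L,\V_+,0)$ has only the trivial solution, so any unbounded sequence of solutions, after normalization, would produce a nonzero solution of the homogeneous problem.

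The main obstacle is to avoid re-proving Karamardian's theorem. Since the statement is described as a recalled consequence, I will simply invoke \cite{karamardian}. I will add a short remark checking that its hypotheses are met in our setting. The cone must be closed and convex, and $d$ must lie in the interior of the dual cone, which equals $\V_{++}$ by self-duality. Karamardian's conclusion, via a degree/homotopy argument between $\LCP(L,\V_+,q)$ and $\LCP(L,\V_+,d)$ for which $\deg=1$, is then solvability for every $q$.

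\emph{Strictly copositive case.} Suppose $\langle L(x),x\rangle>0$ for all $0\ne x\in\V_+$, and verify the two hypotheses with any $d>0$. If $x\in\SOL(L,\V_+,0)$, then $\langle L(x),x\rangle=0$, which forces $x=0$. If $x\in\SOL(L,\V_+,d)$, then
$$0=\langle x,L(x)+d\rangle=\langle L(x),x\rangle+\langle x,d\rangle .$$
By (\ref{cone nonnegativity}), $\langle x,d\rangle\ge0$, so both terms are nonnegative and must vanish. Hence $\langle L(x),x\rangle=0$, and strict copositivity again gives $x=0$. The first part then yields $L\in\bfQ(\V)$.
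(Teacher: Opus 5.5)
Your proposal is correct and matches the paper, which likewise gives no independent proof but cites Karamardian's cone complementarity theorem (applicable since $\V_+$ is closed, convex and self-dual, so $d\in\V_{++}$ lies in the interior of the dual cone), and leaves the strictly copositive case as the routine check you carry out via $0=\langle L(x),x\rangle+\langle x,d\rangle$ with both terms nonnegative. The only blemish is expository: your ``Case $q=0$'' and ``Case $q=d$'' paragraphs re-verify what is already a hypothesis and sketch part of Karamardian's own proof (boundedness via normalization), which is unnecessary once you cite the theorem.
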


 \begin{theorem}
Suppose $A\in \MnR$ is a nonnegative matrix. Then, $\widehat{A}\in \bfQ(\V)$ if and only if the diagonal of $A$ is positive. Consequently, $$\widehat{A}\in \bfQ(\V) \Leftrightarrow A\in \bfQ(\Rn).$$
\end{theorem}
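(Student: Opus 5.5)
Both directions go through Theorem \ref{A and Ahat are related} together with the result of \cite{murty} recalled in the introduction: a nonnegative matrix has the $\bfQ$-property if and only if its diagonal is positive. The plan is to show that, for $A\ge 0$, the condition $\widehat{A}\in\bfQ(\V)$ is equivalent to positivity of the diagonal. The final equivalence $\widehat{A}\in\bfQ(\V)\Leftrightarrow A\in\bfQ(\Rn)$ then follows by combining this with Murty's result.

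\emph{Necessity.} Suppose $\widehat{A}\in\bfQ(\V)$. By Theorem \ref{A and Ahat are related}$(d)$, $A\in\bfQ(\Rn)$. Since $A\ge 0$, Murty's result gives that the diagonal of $A$ is positive.

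\emph{Sufficiency.} Suppose $a_{ii}>0$ for all $i$. I will verify the hypotheses of Karamardian's result, Theorem \ref{karamardian}, for $L=\widehat{A}$ with $d=e=\widehat{\bfone}>0$.

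The first step is to show that $\LCP(\widehat{A},\V_+,0)$ has only the zero solution. Let $x\in\SOL(\widehat{A},\V_+,0)$. By Theorem \ref{embedding theorem}$(ii)$, $r:=[x]$ solves $\LCP(A,0)$. So $r\ge 0$ and $r_i(Ar)_i=0$ for each $i$, since each term is nonnegative and their sum is zero. Because $A\ge 0$, we have $(Ar)_i\ge a_{ii}r_i$. Hence $0=r_i(Ar)_i\ge a_{ii}r_i^2$, which forces $r_i=0$. Thus $[x]=0$, and (\ref{diagonal zero}) gives $x=0$.

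The second step is the analogous argument for $\LCP(\widehat{A},\V_+,\widehat{d})$ with $d=\bfone$. A solution $x$ gives $r=[x]\in\SOL(A,\bfone)$. Then $r_i\big((Ar)_i+1\big)=0$ with $(Ar)_i+1\ge 1>0$, so $r=0$, and again $x=0$. Note that this step does not even need the diagonal condition. Theorem \ref{karamardian} then yields $\widehat{A}\in\bfQ(\V)$.

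Alternatively, Theorem \ref{A and Ahat are related}$(c)$ applied to $A\in\bfR(\Rn)$ gives the same conclusion; the two steps above in fact show $A\in\bfR(\bfone)$. I do not expect a real obstacle. The only subtle point is passing from $[x]=0$ to $x=0$, which is exactly (\ref{diagonal zero}), together with the fact that $\widehat{\bfone}=e$ lies in $\V_{++}$, as required for the $d>0$ in Theorem \ref{karamardian}.
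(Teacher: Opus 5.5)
Your proof is correct. The necessity half is the paper's argument: the embedding theorem, i.e.\ Item $(d)$ of Theorem \ref{A and Ahat are related}, followed by Murty's result. For sufficiency, however, you use a different transfer lemma.

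The paper notes that a nonnegative $A$ with positive diagonal is strictly copositive, because $\langle Ar,r\rangle\geq\sum_i a_{ii}r_i^2>0$ for $0\neq r\geq 0$. It moves strict copositivity to $\widehat{A}$ via Item $(a)$, which rests on $\langle\widehat{A}x,x\rangle=\langle A[x],[x]\rangle$ together with (\ref{diagonal zero}). It then applies the strict-copositivity clause of Theorem \ref{karamardian}.

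You instead check the main hypothesis of Theorem \ref{karamardian} directly. You show $A\in\bfR({\bfone})$ and pull any solution of $\LCP(\widehat{A},\V_+,0)$ or $\LCP(\widehat{A},\V_+,e)$ back to $A$ through the embedding theorem; this is precisely the content of Item $(c)$.

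The paper's route is a one-line observation once Item $(a)$ is available, but it only reaches strictly copositive $A$. Yours exhibits the more general mechanism $A\in\bfR(\Rn)\Rightarrow\widehat{A}\in\bfR(\V)\Rightarrow\widehat{A}\in\bfQ(\V)$. This is the same mechanism the paper relies on in its remark about Type-I and Type-II bdsw $\bfQ$-matrices. For nonnegative matrices, positive diagonal, strict copositivity and the $\bfR$-property all coincide, so neither route loses anything in this theorem.
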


\begin{proof} Suppose the diagonal of $A$ is positive. Then, $A$ is strictly copositive; by Item $(a)$ of Theorem \ref{A and Ahat are related}, $\widehat{A}$ is strictly copositive. By the above theorem,  $\widehat{A}\in \bfQ(\V)$.
Now suppose $\widehat{A}$ is in $\bfQ(\V)$. Then for all $r\in \Rn$, $\LCP(\widehat{A},\V_+,\widehat{r})$ has  a solution. By Theorem \ref{embedding theorem}, $\LCP(A,r)$ has a solution for all $r\in \Rn$. Thus, $A$ is a $\bfQ$-matrix. By a well-known result (mentioned in the Introduction), $A$ must have a positive diagonal.
\end{proof}

Our next result deals with a rank-one transformation. It is a generalization of the result mentioned in the Introduction: A rank-one matrix $A=uv^T$ is in $\bfQ$ if and only if $A$ is a positive matrix (that is, either $u$ and $v$ are both positive or both negative).

\begin{theorem}
 Let $a,b\in \V$ and  $L=a\otimes b$. Then the following statements are equivalent:  
\begin{itemize}
\item [$(i)$] $a>0,b>0$ or $a<0,b<0$.
\item [$(ii)$] The implication $0\neq x\geq 0\Rightarrow L(x)>0$ holds.
\item [$(iii)$] $L\in \bfQ(\V).$ 
\end{itemize}

\end{theorem}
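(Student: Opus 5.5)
I will prove $(i)\Rightarrow(ii)\Rightarrow(iii)\Rightarrow(i)$. For $(i)\Rightarrow(ii)$: if $a>0,b>0$ and $0\neq x\geq 0$, then by (\ref{cone nonnegativity}) $\langle b,x\rangle>0$, so $L(x)=\langle b,x\rangle a$ is a positive multiple of $a>0$, hence $L(x)>0$. If $a<0,b<0$, write $L=(-a)\otimes(-b)$ and argue the same way.

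For $(ii)\Rightarrow(iii)$, I will use Karamardian's theorem (Theorem \ref{karamardian}). Condition $(ii)$ gives, for $0\neq x\geq 0$, $\langle L(x),x\rangle>0$ by (\ref{cone nonnegativity}), since $L(x)>0$. So $L$ is strictly copositive on $\V_+$, and Theorem \ref{karamardian} yields $L\in\bfQ(\V)$.

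The main work is $(iii)\Rightarrow(i)$.

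First, $b\neq 0$: if $b=0$ then $L=0$, which fails to solve $\LCP(L,\V_+,-e)$.

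Next I use that $L\in\bfQ(\V)$ implies an $\bfS$-type property. Solving $\LCP(L,\V_+,-e)$ gives $x\geq 0$ with $L(x)-e\geq 0$, so $L(x)\geq e>0$. Since $L(x)=\langle b,x\rangle a$, we get $\langle b,x\rangle\neq 0$ and $a=L(x)/\langle b,x\rangle$, so $a$ is a nonzero multiple of an interior element. Hence either $a>0$ or $a<0$.

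To obtain the sign of $b$, I will test with $q=-tc$ for a suitable element $c$. Let $c\geq 0$ be arbitrary nonzero, and take a solution $x$ of $\LCP(L,\V_+,q)$ with $q$ chosen as follows. Consider $q=-e+\lambda a$, and also the problems with $q=\pm a$ combined with a term involving a vector $u\geq0$.

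The cleaner route is to look at $y=\langle b,x\rangle a+q\geq0$ with $\langle x,y\rangle=0$ and force conclusions on $b$.

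Assume $a>0$; the case $a<0$ follows by replacing $(a,b)$ with $(-a,-b)$. I want to show $b>0$, that is, $\langle b,u\rangle>0$ for every primitive idempotent $u$. Suppose instead that $\langle b,u\rangle\leq 0$ for some primitive idempotent $u$. Let $u'=e-u\geq 0$, which satisfies $\langle u,u'\rangle=0$. Take $q=-u+Mu'$ with $M$ large; since $q$ is not $\geq 0$, a solution $x$ needs $x\neq 0$. Then $y=sa-u+Mu'\geq 0$ with $s=\langle b,x\rangle$, so $\langle y,u\rangle=s\langle a,u\rangle-1\geq 0$, which forces $s>0$. Complementarity gives $0=\langle x,y\rangle=s\,\langle x,a\rangle-\langle x,u\rangle+M\langle x,u'\rangle$. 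Here I will push a contradiction via the choice of $M$: the aim is to show that $x$ is essentially proportional to $u$, so that $s=\langle b,x\rangle\leq 0$, contradicting $s>0$.

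That last step, controlling $x$ in a general Jordan algebra, is the main obstacle. If it does not yield cleanly, I will fall back on the embedding theorem (Theorem \ref{embedding theorem}) as follows. Pick a Jordan frame $\E$ containing $u$, and restrict to $q=\widehat{r}$; then the $\LCP$ reduces to a standard $\LCP$ on the "diagonal" coordinates $[x]$ and $[b]$. Concretely, the conditions $\langle b,x\rangle\cdot a$ and $x\geq 0$ give $[L(x)]=[a]\,\langle b,x\rangle$. One might hope to replace $x$ by $\widehat{[x]}$, which has $\langle b,\widehat{[x]}\rangle=\langle [b],[x]\rangle$, but this equals $\langle b,x\rangle$ only when the off-diagonal Peirce parts of $b$ vanish. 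I expect to handle this by applying the previous argument with $q=-u+Mu'$, where $u'$ is a sum of the remaining frame elements. Then $\langle x,u'\rangle$ must be small as $M\to\infty$ (from complementarity, since $s\langle x,a\rangle$ is bounded by $\langle x,u\rangle$). Along a normalized subsequence, $x/\|x\|\to u$-direction, because $x\geq0$ and $\langle x,u'\rangle\to0$ force the limit into the face generated by $u$. That limit gives $\langle b,u\rangle\geq$ a positive quantity, the desired contradiction. Combining the cases yields $(i)$.
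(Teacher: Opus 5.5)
\textbf{There is a genuine gap in $(iii)\Rightarrow(i)$: your argument only shows $b\geq 0$, not $b>0$.} Your proofs of $(i)\Rightarrow(ii)\Rightarrow(iii)$ are correct and match the paper. So is the first step of $(iii)\Rightarrow(i)$, where solving $\LCP(L,\V_+,-e)$ gives $a>0$ or $a<0$. The problem is in the limiting argument with $q=-u+Mu'$. The lower bound $s_M\geq 1/\langle a,u\rangle$ holds for $\langle b,x_M\rangle$, not for $\langle b,x_M/\|x_M\|\rangle$, and nothing prevents $\|x_M\|\to\infty$. So the limit gives only $\langle b,u\rangle\geq 0$. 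That rules out a negative eigenvalue of $b$, but it cannot handle the case where $b$ is positive semidefinite and singular.

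\textbf{This family of $q$'s cannot reach that case.} In $\V=\mathcal{S}^2$, with $E_{ij}$ the matrix units, take $a=\begin{bmatrix}1&1/2\\1/2&1\end{bmatrix}$, $b=E_{22}$ and $u=E_{11}$; this $u$ is the only primitive idempotent with $\langle b,u\rangle\leq 0$. For each $M>0$, let $s>1$ be the positive root of $\frac34 s^2+(M-1)s-M=0$, let $v=(s+M,-s/2)^T$, and let $X=\frac{4}{s}vv^T$. Then $X\succeq 0$ and $X_{22}=s$. Also $Y=sa-E_{11}+ME_{22}$ is positive semidefinite of rank one with $Yv=0$. Hence $X$ solves $\LCP(L,\V_+,-u+Mu')$ for every $M$, with $\|X\|\to\infty$, even though $L\notin\bfQ(\V)$.

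\textbf{Your embedding fallback fails for the same reason.} Passing to the diagonal coordinates $[x]$ is legitimate only when $a$ and $b$ both lie in the span of the frame, so that $L=\widehat{A}$; that is, only when $a$ and $b$ operator commute. In the example, $b$ is diagonal in $\{E_{11},E_{22}\}$ but $a$ is not. The reduced $2\times 2$ problem (matrix $\begin{bmatrix}0&1\\0&1\end{bmatrix}$, $q=(-1,M)^T$) has no solution, while the original problem does.

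\textbf{The missing idea is to decouple $a$ from the frame.} The paper does this by normalizing $a$ to $e$.
\begin{itemize}
\item With $c=\sqrt{a^{-1}}$, the quadratic representation $P_c$ is a self-adjoint element of $\Aut(\V_+)$ with $P_c(a)=e$.
\item By Theorem \ref{L to Lphi}, $P_cLP_c=e\otimes d$ is again in $\bfQ(\V)$, where $d=P_c(b)$.
\item Since $e$ is diagonal in every Jordan frame, you may use the frame from the spectral decomposition of $d$. This gives $e\otimes d=\widehat{A}$ with $A=\bfone[d]^T$.
\item Theorem \ref{A and Ahat are related}$(d)$ and the rank-one matrix result give $d>0$, hence $b=P_{c^{-1}}(d)>0$.
\end{itemize}

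Translated back to the original coordinates, the witnessing vector is $q=-P_{\sqrt{a}}(f)$, where $f$ is a primitive idempotent with $\langle P_c(b),f\rangle\leq 0$. In the example this is $q=-ww^T$ with $w=ae_1$, and that problem indeed has no solution. So a direct test-vector proof is possible, but the test vector must be adapted to $a$, not to a frame containing $u$.
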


We first prove the following:

\begin{lemma}
    For $d\in \V$, let $T:=e\otimes d$, where $e$ denotes the unit element in $\V$. If $T\in \bfQ(\V)$, then $d>0$.
\end{lemma}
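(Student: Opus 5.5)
The plan is to argue by contradiction. Assume $T=e\otimes d\in\bfQ(\V)$ but $d\not>0$, and exhibit a single $q\in\V$ for which $\LCP(T,\V_+,q)$ has no solution. Note that $T(x)=\langle d,x\rangle\, e$, so the image of $T$ is the line through the unit element.

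First I would use the spectral decomposition of $d$ to locate a bad direction. Write $d=\sum_{i=1}^n d_if_i$ with respect to some Jordan frame $\{f_1,\ldots,f_n\}$. Since $d\not>0$, some eigenvalue satisfies $d_k\leq 0$. Then $\langle d,f_k\rangle=d_k\leq 0$, because Jordan frames are orthonormal in the trace inner product.

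Next I would take $q:=-f_k$ and suppose $x$ solves $\LCP(T,\V_+,q)$. Put $t:=\langle d,x\rangle$. Then
$$y=te-f_k=(t-1)f_k+t\,(e-f_k),$$
whose eigenvalues with respect to the frame $\{f_i\}$ are $t-1$ and $t$. So $y\geq 0$ forces $t\geq 1$. Complementarity then reads
$$0=\langle x,y\rangle=(t-1)\langle x,f_k\rangle+t\,\langle x,e-f_k\rangle .$$
Both terms are nonnegative by (\ref{cone nonnegativity}), since $x\geq 0$, $f_k\geq 0$ and $e-f_k\geq 0$. As $t>0$, this gives $\langle x,e-f_k\rangle=0$.

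The key step is to conclude that $x=\lambda f_k$ for some $\lambda\geq 0$; I expect this to be the main obstacle, since it needs standard Jordan-algebraic facts. I would use the known result that, for $u,v\in\V_+$, $\langle u,v\rangle=0$ implies $u\circ v=0$ \cite{gowda-sznajder-tao2004}. Applying it to $x$ and $e-f_k$ gives $x\circ f_k=x$. Hence $x$ lies in the Peirce space $\V(f_k,1)$, which equals $\R f_k$ because $f_k$ is primitive \cite{faraut-koranyi}. With $x=\lambda f_k$ and $\lambda\geq 0$, we get $t=\langle d,x\rangle=\lambda d_k\leq 0$, which contradicts $t\geq 1$. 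Therefore $\LCP(T,\V_+,-f_k)$ has no solution, so $T\notin\bfQ(\V)$, and hence $d>0$.

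If I want to avoid the Peirce-space argument, I would instead use the eigenvalue interlacing and trace facts more directly. From $\langle x,f_j\rangle=0$ for all $j\neq k$, one wants to deduce that $x$ is supported on $f_k$. This again reduces to the standard fact that a positive semidefinite element with zero diagonal entry in the Peirce decomposition has zero off-diagonal components in that row. So I would cite this fact rather than reprove it.
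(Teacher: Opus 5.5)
Your proof is correct, but it takes a different route from the paper. The paper reduces to matrices. With $d=\sum_j d_je_j$ spectral, it observes that $T=\widehat{A}$ for the rank-one matrix $A={\bfone}\,[d]^T$ relative to that Jordan frame. Theorem~\ref{A and Ahat are related}(d) then gives $A\in\bfQ(\Rn)$, and the known characterization of rank-one $\bfQ$-matrices forces $[d]>0$. You instead work directly in $\V$ with $q=-f_k$. This is essentially the ``original direct proof'' mentioned in the remark after the lemma. The paper's route is short given the embedding machinery but relies on the external rank-one matrix theorem; yours is self-contained.

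The step you flag as the main obstacle is unnecessary. Since $d=\sum_j d_jf_j$, linearity gives $t=\langle d,x\rangle=\sum_j d_j\langle x,f_j\rangle$. You already have $\langle x,e-f_k\rangle=\sum_{j\neq k}\langle x,f_j\rangle=0$ with every summand nonnegative, so $\langle x,f_j\rangle=0$ for all $j\neq k$. Hence $t=d_k\langle x,f_k\rangle\le 0$, which contradicts $t\ge 1$. You never need $x=\lambda f_k$, so neither the fact that orthogonality in $\V_+$ gives $x\circ(e-f_k)=0$ nor $\V(f_k,1)=\R f_k$ is required. Both facts are true, so your version still stands.

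Without that detour, your argument is the paper's proof unwound. Only $[x]$ enters the problem, which is what the embedding theorem records. And $\LCP({\bfone}\,[d]^T,q)$ has no solution when $q$ is the negative of the $k$th standard unit vector of $\Rn$ and $d_k\le 0$.
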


\begin{proof}
Suppose $T\in \bfQ(\V)$. We write the spectral decomposition 
$$d=d_1e_1+d_2e_2+\cdots+d_ne_n,$$ where $\{e_1,e_2,\ldots, e_n\}$ is a Jordan frame. From the definition of a Jordan frame,  $e=e_1+e_2+\cdots+e_n$.
Let $A=[a_{ij}]\in \MnR$, where $a_{ij}=d_j$ for all $i,j$. Then corresponding to the Jordan frame $\{e_1,e_2,\ldots, e_n\}$, for any $x\in \V$,
$$\widehat{A}x=\sum_{i=1}^{n}\Big (\sum_{j=1}^{n}a_{ij}\langle x,e_j\rangle\Big) e_i= \sum_{i=1}^{n}\Big \langle x, \sum_{j=1}^{n}d_je_j\Big \rangle e_i=(e\otimes d)(x)=T(x).$$
Then $\widehat{A}=T\in \bfQ(\V)$. From Theorem \ref{A and Ahat are related}, Item $(d)$, $A\in \bfQ(\Rn).$ As $A={\bfone}\,[d]^T$ is a rank-one matrix with ${\bfone} >0$, we must have $[d]>0$ in $\Rn$, that is, $d_i> 0$ for all $i$. This proves that $d>0$ in $\V$.
\end{proof}

\noindent{\bf Remarks.} In the original direct proof, see  \cite{gowda2020}, the above lemma was proved by considering $\LCP(T,\V_+,q)$, where $q=(-1)e_1+0e_2+\cdots+0e_n$ and showing that $d_1>0$ (with a similar proof for other $d_i$).\\

The proof of the theorem is as follows.

\begin{proof} Without loss of generality, we assume that the rank of $\V$ is at least 2.
$(i)\Rightarrow (ii)$: This is obvious as $L(x)=\langle b,x\rangle\,a$.
\\
$(ii)\Rightarrow (i)$: We assume  $(ii)$ so  that for any $0\neq x\geq 0$, $\langle b,x\rangle\,a>0$. Suppose $a>0$: In this case, 
$\langle b,x\rangle >0$ for all $0\neq x\geq 0$. Then, by writing the spectral decomposition of $b$ and taking an appropriate $x$, we see that all the eigenvalues of $b$ are positive; hence $b>0$. Similarly, $b<0$ when $a<0$. Thus we have $(i)$. 
\\
$(ii)\Rightarrow (iii)$: When $(ii)$ holds, $L$ becomes strictly copositive on $\V_+$, that is, $0\neq x\geq 0\Rightarrow \langle L(x),x\rangle >0$. Then by Theorem \ref{karamardian},
$L$  has the $\bfQ$-property. 
\\
$(iii)\Rightarrow (i)$: Suppose $L=a\otimes b\in \bfQ(\V)$. 
With $e$ denoting the unit element of $\V$, LCP$(L,\V_+,-e)$ has a solution, say, $u$. Then, $L(u)-e\geq 0$ implies that $0\neq u\geq 0$ and  
$L(u)\geq e>0$;  so $\langle b,u\rangle \,a>0$. Then, either $a>0$ or $a<0$. 
{\it Supposing $a>0$, we
will show that $b>0$. }(When $a<0$, we have $-a>0$. Since  $L=(-a)\otimes (-b)\in \bfQ(\V)$, we must have $-b>0$, i.e., $b<0$.)\\
We will employ a  standard technique to drive $a$ to $e$ and look at the induced transformation. \\
Now, given that $a>0$, let $c:=\sqrt{a^{-1}}$ and consider the quadratic representation $P_c$ defined by 
$$P_c(x):=2c\circ (c\circ x)-c^2\circ x.$$ 
Because $c$ is invertible, from the well-known properties of quadratic representation, we see that $P_c$ is self-adjoint and invertible,  
$$(P_c)^{-1}=P_{c^{-1}}, \,P_c(\V_+)=\V_+,\,\mbox{and}\,P_c(a)=e.$$
In particular, $\phi:=P_c\in \Aut(\V_+).$
Noting that $\phi$ is self-adjoint, we define a new linear transformation $T$ on $\V$ by 
$$T:=L_\phi=P_cLP_c.$$
As $L\in \bfQ(\V)$, from Theorem \ref{L to Lphi}, $T\in \bfQ(\V)$. 
Now, define $d:=P_c(b)$ so that 
$$T(x)=P_cLP_c(x)=P_c\big (\langle P_c(x),b\rangle \,a\big)=\langle x,P_c(b)\rangle\,P_c(a)=\langle x,d\rangle\,e\quad \mbox{for all}\,\,x\in \V.$$ 
This means that 
$$T=e\otimes d.$$
As $T\in \bfQ(\V)$, from the above lemma, $d>0$. Since $P_c$ is a cone automorphism, it maps the interior of $\V_+$ onto itself. Thus,
$$P_c(b)=d>0\Rightarrow b=P_{c^{-1}}(d)>0.$$   So we have proved that $(iii)\Rightarrow (i)$. This completes the proof of the theorem. 
\end{proof} 
%%%%%%%%%%%%%%%

\section{Concluding remarks and future work} In this paper, we characterized the $\bfQ$-property of banded matrices, specifically concentrating on triangular and bidiagonal southwest matrices. This study motivates the following questions:
$(i)$ What are the $\bfQ_0$-properties of banded matrices?
$(ii)$ What are the LCP properties of other special types of matrices, such as tridiagonal matrices, circular matrices, Toeplitz matrices, etc.?  $(iii)$ What are the graph theory implications of LCP properties? $(iv)$ In connection with Euclidean Jordan algebras, how are $\deg A$ and $\deg \widehat{A}$ related? These questions will be taken up in a future study.

\section{Acknowledgments:} 
Samapti Pratihar acknowledges the support provided by the Prime Minister’s Research Fellowship (PMRF), Ministry of Education, Government of India (Project Number: SB22231580MAPMRF008118) for carrying out this work. She also thanks the Office of Global Engagement, Indian Institute of Technology Madras, for financial assistance to visit the University of Maryland, Baltimore County, through their IIE program, as well as the Department of Mathematics and Statistics, University of Maryland, Baltimore County, for partial financial assistance and excellent hospitality during her visit.

%%%%%%%%%%%%%%%%%%%%%%%%%%%%%%%%%%%%%%%%%%%%%%%%%%%%%%%%%%%%%


\begin{thebibliography}{10}

\bibitem{berman-plemmons} Berman, A., Plemmons, R.J., Nonnegative Matrices in the Mathematical Sciences. SIAM, Philadelphia (1994).

\bibitem{cottle-pang-stone} Cottle, R.W., Pang, J.S., Stone, R.E., The Linear Complementarity Problem. SIAM, Philadelphia (2009).

\bibitem{faraut-koranyi} Faraut, J., Kor{\'a}nyi, A.,  Analysis on Symmetric Cones. Oxford University Press, Oxford (1994).

\bibitem{facchinei-pang} Facchinei, F., Pang, J.-S., Finite-dimensional Variational Inequalities and Complementarity Problems. Vol. I, Springer, New York (2003).

\bibitem{gowda-degree1993} Gowda, M.S., Applications of degree theory to linear complementarity problems. Math. Oper. Res. {\bf 18}, 868--879 (1993).

\bibitem{gowda2020} Gowda, M.S., A characterization of $Q$-property for rank-one linear transformations on Euclidean Jordan algebras. UMBC Online Seminar, Sept. 24, 2020, https://userpages.umbc.edu/$\sim$gowda/presentations/index.html.

\bibitem{gowda-song1999} Gowda, M.S., Song, Y., On semidefinite linear complementarity problems. Math. Prog., Series A. {\bf 88}, 575--587 (2000).

\bibitem{gowda-sznajder2006} Gowda, M.S., Sznajder, R., Automorphism invariance of P and GUS properties of linear transformations on Euclidean Jordan algebras. Math. Oper. Res. {\bf 31}, 109--123 (2006).

\bibitem{gowda-sznajder-tao2004} Gowda, M.S., Sznajder, R., Tao. J., Some P-properties for linear transformations on Euclidean Jordan algebras.  Linear Algebra Appl. {\bf 393}, 203--232 (2004).

\bibitem{gowda-tao-sznajder2012} Gowda, M.S., Tao, J., Sznajder, R., Complementarity properties of Peirce-diagonalizable linear transformations on Euclidean Jordan algebras. Optim. Methods Software. {\bf 27}, 719--733 (2012). 

\bibitem{karamardian} Karamardian, S., An existence theorem for the complementarity problem. J. Optim. Theory Appl. {\bf 19}, 227--232 (1976).

\bibitem{higham} Higham, N.J., The power of bidiagonal matrices. Electron. J. Linear Algebra. {\bf 40}, 453--474 (2024).

\bibitem{murty} Murty, K.G., On the number of solutions to the complementarity problem and
spanning properties of complementary cones. Linear Algebra Appl. {\bf 5}, 65--108 (1972). 

\bibitem{pratihar-sivakumar2024} Pratihar, S., Sivakumar, K.C., Inverse $Z$-matrices with the bidiagonal southwest structure. Contemp. Math., 833, Amer. Math. Soc., [Providence], RI, 211--225 (2026). 
 
\bibitem{sivakumar-sushmitha-tsatsomeros2022} Sivakumar, K.C., Sushmitha, P., Tsatsomeros, M., 
$Q_{\#}$-matrices and $Q_{\dag}$-matrices: two extensions of the $Q$-matrix concept. Linear and Multilinear Algebra. {\bf 70}, 6947--6964 (2022).


%\bibitem{sivakumar-parameswaran-wendler2021} Sivakumar, K.C., Parameswaran, S. and Wendler, M., 2021. Karamardian Matrices: An Analogue of Q-matrices. The Electronic Journal of Linear Algebra, 37, pp.127-155.


 \bibitem{tao-gowda2005} Tao, J., Gowda, M.S.,  Some P-Properties for nonlinear transformations on
 Euclidean Jordan algebras. Math. Oper. Res. {\bf 30}, 985--1004 (2005).
\end{thebibliography}
\end{document}